\theoremstyle{plain}
\newtheorem{proposition}{Proposition}[section]
\newtheorem{theorem}[proposition]{Theorem}
\newtheorem{lemma}[proposition]{Lemma}
\theoremstyle{definition}
\newtheorem{example}[proposition]{Example}
\newtheorem{definition}[proposition]{Definition}
\newtheorem{observation}[proposition]{Observation}
\theoremstyle{remark}
\newtheorem{remark}[proposition]{Remark}
\DeclareMathOperator{\diam}{diam}
\DeclareMathOperator{\dist}{d}
\DeclareMathOperator{\CAT}{\mathsf{CAT}}
\DeclareMathOperator{\Isom}{\mathsf{Isom}}
\DeclareMathOperator{\Ac}{\mathcal{A}}
\DeclareMathOperator{\Bc}{\mathcal{B}}
\DeclareMathOperator{\Cc}{\mathscr{C}}
\DeclareMathOperator{\Fc}{\mathcal{F}}
\DeclareMathOperator{\Nc}{\mathcal{N}}
\DeclareMathOperator{\Oc}{\mathcal{O}}
\DeclareMathOperator{\Sc}{\mathcal{S}}
\DeclareMathOperator{\Hb}{\mathbb{H}}
\DeclareMathOperator{\Nb}{\mathbb{N}}
\DeclareMathOperator{\Rb}{\mathbb{R}}
\newcommand{\abs}[1]{\left|#1\right|}
\newcounter{countharry}
\newcounter{countandy}
\date{\today}
\begin{document}

\title{On the almost sure spiraling of geodesics in $\CAT(0)$ spaces}
\author{Harrison Bray and Andrew Zimmer}
\date{\today}
\keywords{}
\subjclass[2020]{}

\begin{abstract} We prove a logarithm law-type result for the spiraling of geodesics around certain types of compact subsets (e.g. quotients of periodic  Morse flats) in quotients of 
rank one $\CAT(0)$ spaces. 
\end{abstract}

\maketitle


\section{Introduction}

For a finite volume noncompact hyperbolic manifold $M$, 
ergodicity of the geodesic flow 
implies that a typical geodesic is dense in $M$.  
Hence, such a geodesic enters and exits
a fixed cusp infinitely often.
Sullivan's  logarithm law
states that the 
total time a generic geodesic spends in the neighborhood of a cusp is
logarithmic in the time traveled \cite{sullivan82}.

Hersonsky--Paulin proved an analogy of this theorem for neighborhoods of
compact totally geodesic
embedded 
submanifolds in $M$ \cite{HP2010}, or more generally quotients of $\CAT(-1)$ metric spaces. 
They define 
the {\em penetration} of a geodesic 
into a fixed neighborhood $N$ of a compact totally geodesic submanifold to
be the 
total time the geodesic spends in this neighborhood $N$. 
They prove that, as for Sullivan's case, the
penetration is almost surely logarithmic in time.

In this paper, we generalize Hersonsky-Paulin's work to the setting of quotients of rank one $\CAT(0)$ metric spaces. 
In recent years, a number of authors have studied the geodesic flow on such spaces \cite{Knieper1997,Knieper1998, ricks,
LinkPicaud2016,Link2018}. The geodesic flow is no longer (metric) Anosov, but still has dynamics resembling (nonuniform) hyperbolicity. Before stating our general result, we provide one concrete example.

Let $M$ be an oriented compact non-geometric irreducible 3-manifold 
obtained by gluing hyperbolic pieces along tori. By a result of Leeb~\cite{Leeb1995}, we can endow
$M$ with a non-positively curved Riemannian metric. Further, using the flat torus theorem, we can assume that the tori in the geometric decomposition are flat and totally geodesic. 

Knieper~\cite{Knieper1998} proved that there exists a unique measure of maximal entropy $\mathsf{m}$ for the geodesic flow on the unit tangent bundle $T^1 M$ of $M$.  Given $v \in T^1 M$, let $\ell_v : [0,\infty) \rightarrow M$ denote the geodesic ray with $\ell_v'(0)=v$. 

Fix a gluing torus $\mathbb{T} \subset M$  and let $\hat{\mathbb{T}}$ be the
union of all parallel tori (which is isometric to $\mathbb{T}$ times a
closed interval). Then given $v \in T^1 M$, $\epsilon>0$, and $t\geq 0$, we
define the {\em penetration} $\mathfrak p_{\epsilon}(v,t)$ to be the maximal
length of an interval $I$ containing $t$ so that its image 
$\ell_v(I)$ is  contained in an $\epsilon$-neighborhood of $\hat{\mathbb{T}}$, and equals 0
if no such interval exists. 

In this setting we have the following logarithm law. 

\begin{theorem}[see Section~\ref{sec:proof of thm nonpositive}]
  Let $M$ and $\mathbb{T} \subset M$ be as above. Then 
  for any $\epsilon>0$ sufficiently small and $\mathsf{m}$-a.e. $v \in T^1 M$, 
    \[
    \limsup_{t\to+\infty} \frac{\mathfrak p_{\epsilon}(v,t)}
    {\log t}=\frac1{h_{top}(M)}, 
  \]
  where $h_{top}(M)$ is the topological entropy of the geodesic flow on $T^1M$.
  \label{thm:nonpositive}
\end{theorem}

Theorem~\ref{thm:nonpositive} is a special case of a 
logarithm law for this penetration map in the setting of 
rank one actions on $\CAT(0)$ metric spaces. 
We now present the background for this theorem, which we state at the level
of the universal cover.
For precise definitions, see Section~\ref{s:defns}.

Suppose $\Gamma$ is a discrete group of isometries  acting on 
a $\CAT(0)$-metric space $(X,\dist)$. Given a  convex subset $\Cc$ of $X$, a unit speed geodesic ray $\ell \colon
[0,\infty) \rightarrow X$,
and $\epsilon > 0$, we define the {\em penetration} $\mathfrak p_{\Cc,\epsilon}(\ell,t)$ to be the maximal
length of an interval $I$ containing $t$ so that its image 
$\ell_v(I)$ is  contained in an $\epsilon$-neighborhood of $\alpha\Cc$ for some $\alpha\in\Gamma$,  and equals 0
if no such interval exists.

We will assume that the convex set $\Cc$ is {\em Morse}. This well-studied property intuitively means that 
geodesic triangles transverse to $\Cc$ are thin
(see Proposition~\ref{thm:equiv
of Morse} for a precise statement). 
We also assume $\Cc$ admits a cocompact action by an {\em almost
malnormal subgroup}, which most crucially implies the diameter of the
intersection of $\Cc$ with any of its $\Gamma$-translates is uniformly
bounded (see Proposition~\ref{prop:diam_flat_intersection}).

We establish the following logarithm law in the $\CAT(0)$ setting. 
\begin{theorem}[see Section~\ref{s:loglaw}]\label{thm:main} 
  Let $\Gamma$ be a rank one discrete group of isometries acting 
  on a proper $\CAT(0)$ metric space with finite critical exponent.
  Assume $\Cc\subset X$ is convex and
  that $\Gamma_0<\Gamma$ acts cocompactly on $\Cc$. Moreover, assume
\begin{enumerate}
\item\label{item:Morse assumption} $\Cc$ is a Morse subset.
\item\label{item:parallel line assumption} $\Cc$ contains all geodesic lines in $X$ which are parallel to a geodesic line in $\Cc$. 
\item\label{item:malnormal assumption} $\Gamma_0$ is almost malnormal and has infinite index in $\Gamma$.
\item\label{item:Gamma0 counting assumption} There exist a positive  polynomial $Q \colon \Rb \rightarrow (0,\infty)$ and $n_0>0$ such that 
$$
\#\{ \gamma \in \Gamma_0 : n \leq \dist(o,\gamma o) \leq n + n_0\} \asymp Q(n)e^{\delta(\Gamma_0)n}. 
$$
\item 
  \label{item:Gamma_counting_assumption}
  $\#\{\gamma \in \Gamma : \dist(o,\gamma o) \leq n\} \asymp e^{\delta(\Gamma) n}$.
\end{enumerate}
If $\mu$ is a Patterson--Sullivan measure for $\Gamma$ with dimension
$\delta(\Gamma)$, then  for $\mu$-almost every $\xi \in \partial X$ and
every unit speed geodesic ray $\ell \colon [0,\infty) \rightarrow X$ limiting to $\xi$, we have 
\begin{equation}\label{eqn:log law in main theorem}
\limsup_{t \rightarrow \infty} \frac{\mathfrak p_{ \Cc,\epsilon}(\ell, t)}{\log t} = \frac{1}{\delta(\Gamma) - \delta(\Gamma_0)}.
\end{equation}

\end{theorem} 

\begin{remark} In Proposition~\ref{prop:entropy_gap} we confirm that $\delta(\Gamma)>\delta(\Gamma_0)$ under the assumptions of Theorem~\ref{thm:main} and so the right hand side of Equation~\eqref{eqn:log law in main theorem} is a finite number. 
\end{remark} 

When $F$ is a periodic Morse flat, then a thickening of $F$ satisfies Theorem~\ref{thm:main}, see Proposition~\ref{prop:Morse flats} below. In particular, in cocompact $\CAT(0)$-spaces with isolated flats (in the sense of Hruska--Kleiner~\cite{HK2005}) every maximal flat has a thickening which satisfies Theorem~\ref{thm:main}. Notice that the universal cover of the manifold $M$ appearing in Theorem~\ref{thm:nonpositive} has isolated flats and the lift of one of the gluing tori is a maximal flat. See Section~\ref{sec:Morse Flats} for more details.

\subsection{The assumptions of Theorem~\ref{thm:main}}
\label{sec:assumptions}

Hersonsky--Paulin
established Theorem~\ref{thm:main} for $\CAT(-1)$ spaces
\cite[Theorem 5.6]{HP2010}. 
The Morse \eqref{item:Morse assumption}, 
parallel
\eqref{item:parallel line assumption}, and counting
\eqref{item:Gamma0 counting assumption} assumptions 
are not explicit in Hersonsky--Paulin's result, but are consequences of $X$ being
$\CAT(-1)$. 
The almost malnormal property~\eqref{item:malnormal assumption} is explicitly assumed in
Hersonsky--Paulin's work and seems essential for the proofs in both cases. 
We elaborate on these assumptions below.

For $\CAT(-1)$ spaces, the Morse property \eqref{item:Morse assumption}
follows from Gromov hyperbolicity and the  parallel property
\eqref{item:parallel line assumption} is a consequence of the fact that parallel geodesics have the same image. 
The counting property \eqref{item:Gamma0 counting assumption} follows from work of Coornaert \cite[Th\'eor\`eme
7.2]{Coornaert}.
In particular, in the $\CAT(-1)$-setting  
the growth of the orbital counting function 
for a convex cocompact subgroup is purely exponential.

Note that weakening the growth function to allow for some polynomial growth is important in the $\CAT(0)$ setting; for instance the  orbital counting function for a
periodic $d$-flat grows like $n^d$.
In fact, this counting assumption~\eqref{item:Gamma0 counting assumption} is always satisfied when $\Cc$ is a non-positively curved Riemannian manifold~\cite{Knieper1997} and it is possible it is true in general, but verifying this would probably require characterizing the higher rank $\CAT(0)$ spaces admitting geometric actions.

Given a Morse subset, it is always possible to thicken it to satisfy Assumption~\eqref{item:parallel line assumption}, see Proposition~\ref{prop:Cond 2 holds after thickening} below. Further, this assumption is designed to avoid examples of the following form. 

\begin{example} Suppose $X$, $\Gamma$, $\Cc$, and $\Gamma_0$ satisfy Theorem~\ref{thm:main}. Then consider $X' : = X \times [0,1]$ with the product metric  and $\Cc' : = \Cc \times \{0\}$. The $\Gamma$ action on $X$ extends to a $\Gamma$ action on $X'$ by acting trivially in the second factor. Then $X$, $\Gamma$, $\Cc'$, and $\Gamma_0$ satisfy all the assumptions in Theorem~\ref{thm:main} except for ~\eqref{item:parallel line assumption}. Further, if $\ell : [0,\infty) \rightarrow X$ is a geodesic and $r \in [0,1]$, then  $\tilde \ell(t) = (\ell(t), r)$ is a geodesic in $X'$ whose projection does not intersect 
  an $\epsilon$-neighborhood of $\Gamma_0 \backslash \Cc'$
  for any $r > \epsilon$.
\end{example}

We discuss now an example
demonstrating that the logarithm laws fail without the convexity and almost malnormal properties.
Consider a hyperbolic
3-manifold $M_f=\Gamma \backslash \mathbb H^3$ arising as the mapping torus over a
closed surface $S$ with pseudo-Anosov fiber map $f$. 
Since $M_f$ is compact, the Patterson--Sullivan
measure on $\partial \Hb^2=S^2$ is Lebesgue. 
Since $\Gamma_0\cong\pi_1(S)$ is normal and has infinite index in $\Gamma\cong\pi_1(M_f)$, it fails to
be almost malnormal and fails to be convex cocompact. By \cite[Theorem 1.7]{oh_pan}, as discussed in the proof of
\cite[Lemma 49]{GMPU}, for Lebesgue
almost every geodesic ray $\ell\colon [0,\infty)\to \mathbb H^3$, 
\[
  \limsup_{t\to\infty} \frac{\mathfrak p_{\mathcal C,\epsilon}(\ell,t)}{t^{\frac12}} \in
  (0,\infty) 
\]
and thus 
\[
  \limsup_{t\to\infty} \frac{\mathfrak p_{\mathcal C,\epsilon}(\ell,t)}{\log t}
  =\infty
\]
So this example fails to satisfy a logarithm law.

\subsection{Khinchin-type theorem}
\label{s:khinchin}

As in Sullivan's original work, the
logarithm law 
arises as an application of a Khinchin-type theorem. 
This result is inspired by Khinchin's strong 0-1 law for the real line: 
for a
function $\psi\colon \mathbb N\to\mathbb R^+$ which 
is sufficiently regular, the limsup set 
\[
  \Theta(\psi):=\left\{x\in [0,1]: \abs{x-\frac{p}q}<\frac{\psi(q)}q
  \textrm{ for infinitely many reduced }\frac{p}q\in\mathbb Q\right\}
\]
  has
probability 1 if $\psi$ is summable, and probability 0 otherwise. In
particular, taking the family $\psi(q)=q^{-1}$,
we recover 
a probability 1 version of Dirichlet's theorem.

Our logarithm law (Theorem~\ref{thm:main}) will also 
be a consequence of a Khinchin-type theorem. 

Let $X$, $\Gamma$, $\Cc$, $\Gamma_0$, $T_0$, and $Q$ be as in
Theorem~\ref{thm:main}.  Fix a base point $o \in X$. 
Then for $\eta\in\partial X$, let
$\ell_\eta\colon[0,\infty)\to X$ be the unit speed geodesic ray starting at
$o$ and limiting to $\eta$.

\begin{definition}\label{defn:shadows of subsets} Suppose $K \subset X$.
\begin{itemize}
\item Given $\epsilon > 0$, let $\mathcal N_\epsilon (K)$ denote the $\epsilon$-neighborhood of $K$. 
\item Given $T, \epsilon> 0$,  the {\em shadow of depth $T$ and radius $\epsilon$ of $K$} is
  \[
    \Sc_{T,\epsilon}(K)
    := \{\eta\in\partial X \mid \ell_{\eta}([a,b])\subset \mathcal
      N_\epsilon (K) \text{ for some } a,b\in[0,\infty) \text{ with }
    b-a\geq T\}.
  \]
    \end{itemize}
To avoid cumbersome notation, given a function $\phi : [0,\infty) \rightarrow [0,\infty)$,  we let
  \[
    \phi \Sc_{T,\epsilon}(K):=\Sc_{T+\phi(\dist(o,K)),\epsilon}(K).
  \]

  \label{def:shadows}
\end{definition}

Since our base point is fixed, Assumption~\eqref{item:Gamma_counting_assumption} of Theorem~\ref{thm:main} implies there exists a unique Patterson--Sullivan measure $\mu$ for $\Gamma$ of dimension $\delta(\Gamma)$; see 
Theorem~\ref{thm:PS ergodic case}  below.

Fix a function $\phi : [0,\infty) \rightarrow [0,\infty)$ which is slowly
varying (see Section~\ref{s:defns}). Let $[\Cc]$ denote the set of $\Gamma$-translates of $\Cc$ and let
\[
  \Theta^\phi_{T,\epsilon} : =\{\xi \in\partial X\mid \xi\in \phi\Sc_{T,\epsilon}(\alpha\Cc)
    \textrm{ for infinitely many
  }\alpha \Cc\in [\Cc] \}.
\]
The {\em Khinchin series} associated to $\phi$ is 
\[
  K^\phi := \sum_{n\in\mathbb N}
e^{-(\delta(\Gamma)-\delta(\Gamma_0))\phi (n)}
    Q(\phi(n)).
\]

\begin{theorem}[Khinchin-type theorem, see Theorem~\ref{thm:khinchin_1}]
With the notation above,  for any $\epsilon > 0$ and sufficiently large $T > 0$ we have the following dichotomy:
  \begin{enumerate}
    \item If $K^\phi<\infty$, then $\mu(\Theta^\phi_{T,\epsilon} )=0$.
    \item If $K^\phi=\infty$, then $\mu(\Theta^\phi_{T,\epsilon} )=1$.
  \end{enumerate}
  \label{thm:khinchin}
\end{theorem}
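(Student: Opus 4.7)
The plan is to establish this dichotomy as a Borel--Cantelli statement for the family of shadows $\{\phi\Sc_{T,\epsilon}(\alpha\Cc)\}_{\alpha\Cc \in [\Cc]}$ in $\partial X$ with respect to $\mu$. The main technical input is a shadow measure estimate of the form
\[
  \mu\bigl(\Sc_{T,\epsilon}(\alpha\Cc)\bigr) \asymp Q(T)\, e^{-(\delta(\Gamma) - \delta(\Gamma_0))T}\, e^{-\delta(\Gamma)\dist(o,\alpha\Cc)},
\]
valid uniformly in $\alpha$ for $T$ sufficiently large. The geometric intuition is that, thanks to the Morse assumption \eqref{item:Morse assumption}, a ray staying in $\Nc_\epsilon(\alpha\Cc)$ for time $T$ must track a genuine geodesic in $\alpha\Cc$, so its exit point lies within bounded distance of some $\alpha\gamma_0\cdot o'$ with $\gamma_0 \in \Gamma_0$ and $\dist(o,\alpha\gamma_0 o') \approx \dist(o,\alpha\Cc) + T$. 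Summing the Patterson--Sullivan shadow lemma over these candidate exits yields a factor $Q(T)e^{\delta(\Gamma_0)T}$ from Assumption~\eqref{item:Gamma0 counting assumption}, each exit contributing mass $\asymp e^{-\delta(\Gamma)(\dist(o,\alpha\Cc)+T)}$. The parallel-lines assumption~\eqref{item:parallel line assumption} is what prevents the thickened-flat counterexample of the Example from inflating the estimate.

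Combining this shadow estimate with the counting $\#\{\alpha\Cc \in [\Cc] : \dist(o,\alpha\Cc) \in [n,n+1)\} \asymp e^{\delta(\Gamma)n}$, which I would extract from Assumption~\eqref{item:Gamma_counting_assumption} using convexity of $\Cc$ and cocompactness of the $\Gamma_0$-action, produces the key comparison
\[
  \sum_{\alpha\Cc \in [\Cc]} \mu\bigl(\phi\Sc_{T,\epsilon}(\alpha\Cc)\bigr) \asymp e^{-(\delta(\Gamma)-\delta(\Gamma_0))T}\,K^\phi,
\]
where the polynomial growth of $Q$ absorbs the difference between $Q(T+\phi(n))$ and $Q(\phi(n))$. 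In the convergence case $K^\phi<\infty$, the classical Borel--Cantelli lemma immediately gives $\mu(\Theta^\phi_{T,\epsilon})=0$.

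For the divergence case I would verify quasi-independence: for distinct translates $\alpha\Cc\ne\beta\Cc$,
\[
  \mu\bigl(\phi\Sc_{T,\epsilon}(\alpha\Cc)\cap \phi\Sc_{T,\epsilon}(\beta\Cc)\bigr) \lesssim \mu\bigl(\phi\Sc_{T,\epsilon}(\alpha\Cc)\bigr)\,\mu\bigl(\phi\Sc_{T,\epsilon}(\beta\Cc)\bigr).
\]
This is where Assumption~\eqref{item:malnormal assumption} is essential: almost malnormality of $\Gamma_0$ forces distinct translates $\alpha\Cc,\beta\Cc$ to have uniformly bounded coarse intersection in $X$, so a ray realizing long visits to both must enter each of them outside the intersection region and therefore contributes almost independently to each shadow. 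The Kochen--Stone divergent Borel--Cantelli lemma then yields $\mu(\Theta^\phi_{T,\epsilon})>0$. To promote this to full measure, I would argue that $\Theta^\phi_{T,\epsilon}$ is $\Gamma$-invariant modulo $\mu$-null sets (because for $\gamma\in\Gamma$ the rays $\ell_\xi$ and $\ell_{\gamma\xi}$ are asymptotic after the obvious relabeling of $[\Cc]$, and because $\phi$ is slowly varying so that $\phi(\dist(o,\alpha\Cc))-\phi(\dist(o,\gamma^{-1}\alpha\Cc))$ stays bounded), and then invoke ergodicity of the $\Gamma$-action on $(\partial X,\mu)$ from the Hopf--Tsuji--Sullivan dichotomy, which applies in the divergence-type regime ensured by Assumption~\eqref{item:Gamma_counting_assumption}.

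The hardest step will be the shadow estimate. In $\CAT(-1)$ spaces the exponential-in-$T$ decay is forced by the hyperbolic identity $\sinh(\text{dist}) = \sinh(t)\sin(\text{angle})$, which makes a ray grazing $\alpha\Cc$ at angle $\theta$ leave $\Nc_\epsilon(\alpha\Cc)$ on the logarithmic timescale $\log(\epsilon/\theta)$ rather than the Euclidean $\epsilon/\theta$; without pointwise curvature bounds, the analogous estimate in CAT(0) must be extracted from the Morse property and the $\Gamma_0$-cocompact convex structure on $\Cc$, and controlling the overlaps among the shadows of $\Gamma_0$-orbit points so that the factor $Q(T)e^{-(\delta(\Gamma)-\delta(\Gamma_0))T}$ really emerges (rather than merely a polynomial $T$-decay) is delicate. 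The quasi-independence step is also subtle and depends on converting almost malnormality into a quantitative bounded coarse intersection statement for translates of $\Cc$.
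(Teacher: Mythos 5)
Your overall architecture---the Subset Shadow Lemma $\mu(\Sc_{T,\epsilon}(\alpha\Cc))\asymp Q(T)e^{-(\delta(\Gamma)-\delta(\Gamma_0))T}e^{-\delta(\Gamma)\dist(o,\alpha\Cc)}$, the counting of translates in annuli, Borel--Cantelli for convergence, and quasi-independence plus ergodicity for divergence---is exactly the paper's. But two steps, as you state them, contain genuine gaps.

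First, the pairwise quasi-independence $\mu\bigl(\phi\Sc_{T,\epsilon}(\alpha\Cc)\cap\phi\Sc_{T,\epsilon}(\beta\Cc)\bigr)\lesssim\mu\bigl(\phi\Sc_{T,\epsilon}(\alpha\Cc)\bigr)\mu\bigl(\phi\Sc_{T,\epsilon}(\beta\Cc)\bigr)$ for \emph{all} distinct pairs of translates is false. The problem is nesting, not overlap: if $\beta\Cc$ sits ``behind'' $\alpha\Cc$ as seen from $o$ (e.g.\ $\beta=\alpha\gamma_0 g$ with $\gamma_0\in\Gamma_0$ of large translation length), then every ray reaching a deep shadow of $\beta\Cc$ must first fellow-travel $\alpha\Cc$ for a long time, so $\phi\Sc_{T,\epsilon}(\beta\Cc)\subset\phi\Sc_{T,\epsilon}(\alpha\Cc)$ and the left side equals $\mu(\phi\Sc_{T,\epsilon}(\beta\Cc))$; the claimed inequality would then force $\mu(\phi\Sc_{T,\epsilon}(\alpha\Cc))\gtrsim 1$ uniformly, which fails as $\dist(o,\alpha\Cc)\to\infty$. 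The paper instead groups translates into annuli $\Ac_n=\{\alpha\Cc:\dist(o,\alpha\Cc)\in[n,n+1)\}$, uses almost malnormality to show the shadows within one annulus are pairwise \emph{disjoint} for $T$ large, proves a nesting lemma (if two shadows from different annuli meet, the deeper one is contained in a slightly enlarged version of the shallower one), and deduces quasi-independence only for the annular unions $U_n=\bigcup_{\alpha\Cc\in\Ac_n}\phi\Sc_{T,\epsilon}(\alpha\Cc)$, which is what Borel--Cantelli actually needs.

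Second, the passage from positive to full measure is not as easy as ``$\Theta^\phi_{T,\epsilon}$ is $\Gamma$-invariant mod null.'' Changing the basepoint of the ray from $o$ to $\gamma^{-1}o$ costs a bounded amount in both the radius $\epsilon$ and the depth $T$ (via Proposition~\ref{prop:close for a very long time means very close for a long time} and the slowly varying property), so the honest conclusion is only $\gamma\cdot\Theta^\phi_{T,\epsilon}\subset\Theta^\phi_{T-C(\gamma),\epsilon}$ with $C(\gamma)$ unbounded over $\Gamma$; this is not invariance of the fixed set $\Theta^\phi_{T,\epsilon}$. The paper's fix is a genuinely separate idea: construct an auxiliary slowly varying $\psi$ with $\psi\to\infty$ so slowly that $K^{\phi+\psi}=\infty$ still holds, observe that $\Gamma\cdot\Theta^{\phi+\psi}_{T,\epsilon}\subset\Theta^\phi_{T,\epsilon}$ because the divergence $\psi(\dist(o,\alpha_j\Cc))\to\infty$ eventually absorbs the fixed loss $C(\gamma)$, apply Borel--Cantelli to get $\mu(\Theta^{\phi+\psi}_{T,\epsilon})>0$, and only then invoke ergodicity. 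Without $\psi$ (or some substitute, e.g.\ a uniform-in-$T$ lower bound on the Kochen--Stone constant applied to $\bigcap_T\Theta^\phi_{T,\epsilon}$, which you have not provided), your argument does not close.
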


We will deduce Theorem~\ref{thm:main} from  Theorem~\ref{thm:khinchin} by 
considering functions of the form $\phi(x)=\kappa \log x$ with $\kappa > 0$, in which case
$$
K^\phi\asymp\sum_{n\in\mathbb N} n^{-\kappa(\delta(\Gamma)-\delta(\Gamma_0))}Q(\log(n)).
$$

An important tool in the proof of Khinchin-type Theorem~\ref{thm:khinchin} 
is a 
fluctuating density
theorem for 
subset shadows introduced in Definition~\ref{defn:shadows of subsets}
which we
call the {\em Subset shadow lemma}:

\begin{theorem}[Subset Shadow Lemma, see Theorem~\ref{thm:shadowing_flats}]
  \label{thm:shadowing_flats_intro} 
With the notation above,   for any $\epsilon > 0$ there exists $C > 1$ such that: if $T \geq 0$ and $\alpha \in \Gamma$, then 
  \[
\frac{1}{C} Q(T) e^{-\delta(\Gamma) (\dist(o,\alpha \Cc)+T)+\delta(\Gamma_0)T}  \leq \mu(\Sc_{T,\epsilon}(\alpha \Cc)) \leq C Q(T) e^{-\delta(\Gamma) (\dist(o,\alpha \Cc)+T)+\delta(\Gamma_0)T}.   \]

\end{theorem}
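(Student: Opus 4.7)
The plan is to mimic Sullivan's classical shadow lemma, but applied to the convex subset $\alpha\Cc$ rather than to a single orbit point. The main idea is that $\Sc_{T,\epsilon}(\alpha\Cc)$ can be approximated, both from above and from below, by unions of classical point shadows $\Sc(\alpha\gamma o, R)$ where $\gamma$ ranges over elements of $\Gamma_0$ whose orbit points $\alpha\gamma o$ lie at depth $T$ in $\alpha\Cc$, measured from the nearest-point projection of $o$ onto $\alpha\Cc$. Under this picture, the two factors in the conclusion arise separately: $Q(T)e^{\delta(\Gamma_0)T}$ counts the number of relevant $\gamma$ via Assumption~\eqref{item:Gamma0 counting assumption} (after a bounded change of base point to compensate for the projection), while $e^{-\delta(\Gamma)(\dist(o,\alpha\Cc)+T)}$ is the classical Sullivan estimate for each individual point shadow, available because Assumption~\eqref{item:Gamma_counting_assumption} yields a unique Patterson--Sullivan density of dimension $\delta(\Gamma)$.

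For the upper bound I would fix $\eta \in \Sc_{T,\epsilon}(\alpha\Cc)$ and use the $\CAT(0)$-convexity of $t \mapsto \dist(\ell_\eta(t), \alpha\Cc)$ to obtain a unique maximal interval $[a,b]$ with $b-a \geq T$ and $\ell_\eta([a,b]) \subset \Nc_\epsilon(\alpha\Cc)$; moreover $a \geq \dist(o,\alpha\Cc) - \epsilon$. Pick $q \in \alpha\Cc$ within $\epsilon$ of $\ell_\eta(a+T)$, so that $\dist(o,q) = \dist(o,\alpha\Cc) + T + O(\epsilon)$. The Morse assumption together with the parallel-line assumption confines $q$ to a uniformly bounded transverse slice of $\alpha\Cc$ at depth $T$, and cocompactness of $\Gamma_0$ produces $\gamma \in \Gamma_0$ with $\dist(\alpha\gamma o, q)$ bounded. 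This yields $\Sc_{T,\epsilon}(\alpha\Cc) \subset \bigcup_\gamma \Sc(\alpha\gamma o, R)$, where $\gamma$ ranges over a set of cardinality $\lesssim Q(T) e^{\delta(\Gamma_0) T}$ by Assumption~\eqref{item:Gamma0 counting assumption}. Applying the classical shadow lemma to each term and summing gives the desired upper bound.

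For the lower bound I would select $\gamma \in \Gamma_0$ whose orbit points $\alpha\gamma o$ lie within bounded distance of $\alpha\Cc$ at geodesic distance $\dist(o,\alpha\Cc) + T + O(1)$ from $o$; there are $\asymp Q(T) e^{\delta(\Gamma_0) T}$ such elements by Assumption~\eqref{item:Gamma0 counting assumption}. For each one, a sufficiently small classical shadow $\Sc(\alpha\gamma o, r_0)$ is contained in $\Sc_{T-O(1),\epsilon}(\alpha\Cc)$: any geodesic from $o$ in this shadow passes within $r_0$ of $\alpha\gamma o$, and the Morse property (combined with parallel-line stability) forces it to remain inside $\Nc_\epsilon(\alpha\Cc)$ for a duration $\geq T - O(1)$. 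A packing argument using the geometry of $\Cc$ extracts a sub-collection of disjoint such shadows of size $\asymp Q(T) e^{\delta(\Gamma_0) T}$, each of $\mu$-mass $\asymp e^{-\delta(\Gamma)(\dist(o,\alpha\Cc)+T)}$ by the Sullivan shadow lemma; summing and using the polynomial nature of $Q$ to absorb the $O(1)$ shift in $T$ completes the lower bound.

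The main technical obstacle is establishing this tight geometric correspondence between $\Sc_{T,\epsilon}(\alpha\Cc)$ and the orbit $\alpha\Gamma_0 o$ at depth $T$. In a $\CAT(-1)$ setting this is essentially immediate from thin triangles; in the general $\CAT(0)$ setting the Morse assumption is the substitute, forcing geodesics inside the tube of $\alpha\Cc$ to mimic geodesics of $\alpha\Cc$ itself up to bounded error. The parallel-line assumption rules out phenomena like the product example $X \times [0,1]$ in which many geodesics shadow $\alpha\Cc$ without ever being close to an orbit point, which would destroy both the upper-bound cover and the lower-bound disjointness.
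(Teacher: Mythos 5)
Your proposal is correct and follows essentially the same route as the paper: approximate $\Sc_{T,\epsilon}(\alpha\Cc)$ from above and below by unions of classical point shadows centered at orbit points of $\alpha\Gamma_0 o$ at depth $\approx T$ past $\pi_{\alpha\Cc}(o)$, count those orbit points via Assumption (4), control individual shadow masses and the near-additivity $\dist(o,\alpha\gamma o)\approx\dist(o,\alpha\Cc)+\dist(o,\gamma o)$ via the Morse property, and use the parallel-line assumption (through the "close for a very long time implies very close for a long time" step) plus a separated-subset packing argument for the lower bound. The only cosmetic difference is that the paper absorbs the $O(1)$ shifts into the annulus for $\dist(o,\gamma o)$ rather than into $T$, which is equivalent.
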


In particular, Theorem~\ref{thm:shadowing_flats_intro} is used to show
that the Patterson--Sullivan measure of rescaled shadows grows like the
terms of the Khinchin series. The Khinchin-type theorem is then 
proven with a Borel--Cantelli argument. 

\subsection{Historical remarks}

The Khinchin-type theorem and 
logarithm law was first studied in the context of excursions into
noncompact cuspidal regions by 
Sullivan 
for finite volume Kleinian groups \cite{sullivan82}. His approach was based on 
direct connections between the modular group acting on the upper-half plane 
and  the original work of Khinchin for the real line \cite{Khintchine}.

Stratmann--Velani generalized Sullivan's work to the geometrically finite case
\cite{StratmannVelani}, which  
Hersonsky--Paulin extended to 
trees \cite{hersonskypaulin07} and then 
to complete pinched negatively curved Riemannian manifolds with certain growth
assumptions on the parabolic subgroups \cite[Theorems 1.3 and 1.4]{HP2004}.
Related results have been established in the settings of locally symmetric
spaces \cite{kleinbockmargulis98,kleinbockmargulis99} and Gromov hyperbolic
metric spaces \cite{FSU, bray_tiozzo}. See also the survey of Athreya
\cite{athreya_survey}.

Hersonsky--Paulin then proved a Khinchin-type theorem and 
logarithm law for almost-sure spiraling
around convex subsets stabilized by certain convex cocompact subgroups 
in the setting of proper $\CAT(-1)$ spaces
\cite{HP2010}. As discussed above, their results are directly generalized
by our Theorems~\ref{thm:khinchin} and \ref{thm:main}.

\subsection{Structure of the paper}
In Section~\ref{s:preliminaries}, we define the terminology used in Theorem~\ref{thm:main} and recall some useful results about geodesics
and Patterson--Sullivan measures. In Section~\ref{sec:Morse subsets} and Section~\ref{s:convex_cocompact} we establish some properties of Morse subsets and of convex cocompact groups respectively. 

In Section~\ref{s:shadow_lemma} we prove the crucial ``Subset Shadow Lemma.'' In Section~\ref{s:main} we prove 
Theorem~\ref{thm:khinchin} using this shadow lemma. Then in Section~\ref{s:loglaw} we prove Theorem~\ref{thm:main} using Theorem~\ref{thm:khinchin}.

In the last section of the paper, Section~\ref{sec:Morse Flats}, we
consider periodic Morse flats and show that after thickening, they satisfy
the hypothesis of Theorem~\ref{thm:main}. We conclude with the
proof of Theorem~\ref{thm:nonpositive} in Section~\ref{sec:proof of thm nonpositive}.

\subsection*{Acknowledgements} 
  The authors thank Christopher Leininger for pointing us to the work of
\cite{GMPU} discussed in Section~\ref{sec:assumptions}, as well as Matthew Durham, Autumn Kent, Christopher Leininger,  Caglar Uyanik, and Chenxi Wu for helpful discussions.
Zimmer was partially supported by a Sloan research fellowship and grants DMS-2105580 and DMS-2452068 from the National Science Foundation.
The authors thank the Simons--Laufer Mathematical Sciences Institute for
their hospitality at the completion of this work.

\section{Preliminaries}
\label{s:preliminaries}

\subsection{Notation and conventions}
Given positive real-valued functions $f,g$, we write $f\lesssim g$ if there exists a constant $C>0$ so that $f\leq C g$, and $f\asymp g$ if $f\lesssim g$ and $g\lesssim f$. 

Throughout the paper, $X$ will be a proper $\CAT(0)$ space, $\partial X$ will denote the geodesic compactification of $X$, and    $\Bc_r(x)$ will denote the ball of radius $r$ about a point $x$. 

 A geodesic is a map $\ell : I \rightarrow X$ of an interval $I \subset \Rb$ into $X$ such that 
$$
\dist(\ell(s), \ell(t)) = \abs{t-s}
$$
for all $s,t \in I$ (in particular, we always assume that our geodesics are unit speed). We will frequently identify a geodesic with its image. 

Two geodesic lines $\ell_1, \ell_2 : \Rb \rightarrow X$ are \emph{parallel} if
$$
t \in \Rb \mapsto \dist(\ell_1(t), \ell_2(t))\in [0,\infty)
$$
is bounded, which implies that this function is constant (see \cite[Chapter II.2 Theorem 2.13]{BH}).  

Since $X$ is $\CAT(0)$, every two points $x,y \in X$ are joined by a unique geodesic and we denote its image by $[x,y]$. Likewise, for $x \in X$ and $\xi \in \partial X$, we let $[x,\xi)$ denote the image of the unique geodesic ray starting at $x$ and limiting to $\xi$. 

Given a subgroup $G \subset \Isom(X)$, we use $\Lambda(G) \subset \partial X$ to denote the limit set of $G$, that is the set of an accumulation points of the orbit $G \cdot o$ in $\partial X$ for some (any) $o \in X$.

\subsection{Key definitions}
\label{s:defns}
We continue to assume that $X$ is a proper $\CAT(0)$ space and now introduce the main definitions from the assumptions of
Theorem~\ref{thm:main}.
\begin{itemize}
\item A geodesic $\ell : \Rb \rightarrow X$ has \emph{rank one} if $\ell$
  does not bound a half plane, that is $\ell$ does not extend to an
  isometric embedding of $\Rb \times [0,\infty)$. A \emph{rank one
  isometry} of $X$ is an isometry that translates along a rank one geodesic. A subgroup $\Gamma \subset \Isom(X)$ has \emph{rank one} if it contains a rank one isometry and is \emph{non-elementary} if its limit set $\Lambda(\Gamma)$ has at least three points. 
\item A convex subset $\Cc \subset X$ is \emph{Morse} if for every $A
\geq 1$, $B \geq 0$ there exists $D = D(A,B) > 0$ such that: whenever $\ell
\colon[a,b] \rightarrow X$ is a $(A,B)$-quasi-geodesic with endpoints in $\Cc$ we have 
$$
\ell \subset \Nc_D(\Cc).
$$
\item A subgroup $H < G$ is \emph{almost malnormal} if $gHg^{-1} \cap H$ is finite for all $g \in G \smallsetminus H$. 
\item A  function $\phi\colon [0,\infty)\to[0,\infty)$ is 
{\em slowly varying}
if there exist constants $B,A>0$ such that 
$\abs{x-y} \leq B$ implies
$\abs{\phi(x)-\phi(y)}\leq A$. 
\end{itemize}

\begin{remark} There are multiple definitions of slowly varying functions 
in the literature and we note that $\phi$ is slowly varying as defined here if and only if $e^\phi$ is slowly
varying as defined in  \cite{HP2010}.
\end{remark}

\subsection{Convexity} We continue to assume that $X$ is a proper $\CAT(0)$ space. In this section we recall some important convexity properties of $\CAT(0)$ spaces. 

Recall that the \emph{Hausdorff distance} between two subsets $A,B \subset X$ is 
$$
\dist^{\rm Haus}(A,B) = \max\left\{ \sup_{a\in A} \dist(a,B), \, \sup_{b \in B} \dist(b, A) \right\}. 
$$

\begin{lemma}\label{lem:HD_between_geod} If $x_1,x_2, y_1, y_2 \in X$, then 
$$
\dist^{\rm Haus}([x_1,x_2], [y_1,y_2]) \leq \max( \dist(x_1,y_1), \dist(x_2,y_2)). 
$$
In particular, if $x, y \in X$ and $\eta \in \partial X$, then 
$$
\dist^{\rm Haus}([x,\eta), [y,\eta)) \leq \dist(x,y).
$$
\end{lemma}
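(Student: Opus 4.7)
The plan is to invoke the standard convexity of the metric on a $\CAT(0)$ space. Recall \cite[Chapter~II.2, Proposition 2.2]{BH}: if $\gamma_1, \gamma_2 \colon [0,1] \to X$ are geodesics parametrized proportionally to arc length, then the function
$$
t \mapsto \dist(\gamma_1(t), \gamma_2(t))
$$
is convex. This immediately yields the bound
$$
\dist(\gamma_1(t), \gamma_2(t)) \leq (1-t)\dist(x_1,y_1) + t\, \dist(x_2,y_2) \leq \max(\dist(x_1,y_1), \dist(x_2,y_2))
$$
when $\gamma_1$ parametrizes $[x_1,x_2]$ and $\gamma_2$ parametrizes $[y_1,y_2]$. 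For the first inequality of the lemma, I would then argue that every point $p \in [x_1,x_2]$ equals $\gamma_1(t)$ for some $t \in [0,1]$, so $\dist(p, [y_1,y_2]) \leq \dist(\gamma_1(t), \gamma_2(t))$ is bounded by the claimed maximum, and the symmetric argument handles points on $[y_1,y_2]$. This gives the Hausdorff distance bound.

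For the second statement, the main issue is that the geodesic rays $[x,\eta)$ and $[y,\eta)$ are defined on $[0,\infty)$ rather than a compact interval, so I cannot directly apply the finite-interval convexity estimate to their common endpoint at infinity. Instead, I would parametrize $\ell_1 = [x,\eta)$ and $\ell_2 = [y,\eta)$ by arc length and observe that the function $f(t) = \dist(\ell_1(t), \ell_2(t))$ is convex on $[0,\infty)$ (again by $\CAT(0)$ convexity applied to the geodesic segments $[\ell_1(0),\ell_1(T)]$ and $[\ell_2(0),\ell_2(T)]$ and letting $T \to \infty$). Because $\ell_1$ and $\ell_2$ are asymptotic (they limit to the same boundary point $\eta$), $f$ is bounded on $[0,\infty)$. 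A convex bounded function on a half-line is non-increasing, so
$$
f(t) \leq f(0) = \dist(x,y) \quad \text{for all } t \geq 0.
$$
Hence $\dist(\ell_1(t), \ell_2(t)) \leq \dist(x,y)$ for all $t$, which yields $\dist^{\rm Haus}([x,\eta),[y,\eta)) \leq \dist(x,y)$ by the same point-by-point argument as in the first part.

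The only mildly non-routine step is justifying that $f$ is bounded, but this is precisely the standard fact that two asymptotic geodesic rays in a $\CAT(0)$ space have uniformly bounded distance (\cite[Chapter~II.8]{BH}), which is already used implicitly throughout the paper. Everything else is a direct application of convexity of the distance function.
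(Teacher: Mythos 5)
Your proof is correct. For the first assertion you and the paper do the same thing: the paper cites \cite[Chapter II.2 Proposition 2.2]{BH} directly, and your point-by-point unpacking of the convexity statement is exactly what that proposition encodes. For the ``in particular'' part your route is genuinely different. The paper approximates the rays by the segments $[x,z_n]$ and $[y,z_n]$ for a sequence $z_n \rightarrow \eta$, applies the first assertion (with the common endpoint $z_n$ contributing $0$ to the max), and passes to the limit using local uniform convergence of the segments to the rays. You instead observe that $f(t)=\dist(\ell_1(t),\ell_2(t))$ is convex on $[0,\infty)$ and bounded (since the two rays are asymptotic), hence non-increasing, so $f(t)\leq f(0)=\dist(x,y)$. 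Both arguments are valid; yours trades the convergence/semicontinuity step for the standard fact from \cite[Chapter II.8]{BH} that rays with the same endpoint at infinity are at bounded distance, and as a bonus it gives the slightly stronger pointwise conclusion that $t\mapsto\dist(\ell_1(t),\ell_2(t))$ is monotone, not just bounded by $\dist(x,y)$.
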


\begin{proof} The first assertion is  \cite[Chapter II.2 Proposition 2.2]{BH}. For the ``in particular'' part, fix $z_n \rightarrow \eta$. Since unit speed parametrizations of $[x,z_n)$ and $[y,z_n)$ converge locally uniformly to unit speed parametrizations of $[x,\eta)$ and $[y,\eta)$ respectively, we have 
$$
\dist^{\rm Haus}([x,\eta), [y,\eta)) \leq \liminf_{n \rightarrow \infty} \dist^{\rm Haus}([x,z_n], [y,z_n]) \leq \dist(x,y)
$$
by the first assertion. 
\end{proof}

\begin{lemma} \cite[Chapter II.2 Corollary  2.5]{BH}  \label{lem:following_flats}
  For any convex set $\Cc$ and any geodesic $\ell \colon [a,b] \rightarrow X$ the function 
  $$
  t \mapsto \dist(\ell(t), \Cc)
  $$
  is convex. Hence,   if $\dist(\ell(a),\Cc),\dist(\ell(b),\Cc)\leq \epsilon$, then 
  $$
  \ell([a,b])\subset \Nc_\epsilon(\Cc).
  $$
\end{lemma}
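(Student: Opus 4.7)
The plan is to reduce this to two standard facts, the heart of which is already invoked in Lemma~\ref{lem:HD_between_geod}. For the first assertion—convexity of $t \mapsto \dist(\ell(t), \Cc)$—I would argue as follows. Since $X$ is proper and $\Cc$ is convex and closed, every $x \in X$ has a nearest-point projection $\pi(x) \in \Cc$. Given $a \leq t_0 < t_1 \leq b$, set $p_i := \pi(\ell(t_i))$ and let $\gamma$ be the geodesic from $p_0$ to $p_1$; by convexity of $\Cc$, $\gamma \subset \Cc$. Reparametrize both $\ell|_{[t_0,t_1]}$ and $\gamma$ proportionally on $[0,1]$. The $\CAT(0)$ two-geodesic convexity inequality (cf.\ \cite[Chapter II.2 Proposition 2.2]{BH}, which also underlies Lemma~\ref{lem:HD_between_geod}) says that
$$
\lambda \;\longmapsto\; \dist\bigl(\ell\bigl((1-\lambda)t_0 + \lambda t_1\bigr),\, \gamma(\lambda)\bigr)
$$
is convex on $[0,1]$. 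Since the right-hand geodesic lies in $\Cc$, this dominates $\dist(\ell((1-\lambda)t_0 + \lambda t_1), \Cc)$, while at $\lambda = 0,1$ the two quantities agree by the definition of $\pi$. Hence
$$
\dist\bigl(\ell\bigl((1-\lambda)t_0 + \lambda t_1\bigr),\, \Cc\bigr) \;\leq\; (1-\lambda)\,\dist(\ell(t_0), \Cc) + \lambda\,\dist(\ell(t_1), \Cc).
$$
As $t_0, t_1 \in [a,b]$ were arbitrary, this is exactly convexity of $f(t) := \dist(\ell(t), \Cc)$.

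The ``hence'' part is then immediate: if $f(a), f(b) \leq \epsilon$, then for any $t = (1-\lambda) a + \lambda b$ with $\lambda \in [0,1]$, convexity gives $f(t) \leq (1-\lambda)\epsilon + \lambda \epsilon = \epsilon$, so $\ell(t) \in \Nc_\epsilon(\Cc)$.

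I do not expect any real obstacle here: the statement is a foundational $\CAT(0)$ fact and the authors explicitly cite it to \cite{BH}. The only minor care concerns the existence of nearest-point projections, which follows from properness of $X$ together with closedness of $\Cc$; uniqueness of $\pi$ is not actually needed for the convexity argument, since we only use that \emph{some} closest point in $\Cc$ witnesses $f(t_i)$ at the two endpoints, and that $\gamma$ lies in $\Cc$ by convexity.
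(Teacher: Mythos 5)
Your proof is correct and is precisely the standard argument for \cite[Chapter II.2 Corollary 2.5]{BH}, which the paper simply cites rather than proves: project the endpoints to $\Cc$, join the projections by a geodesic inside $\Cc$, and apply the $\CAT(0)$ convexity of the distance between two geodesics. The only cosmetic point is that the lemma does not assume $\Cc$ is closed; this is harmless since $\dist(\cdot,\Cc)=\dist(\cdot,\overline{\Cc})$ and $\overline{\Cc}$ is convex (or one can use $\epsilon'$-almost-nearest points in place of $\pi$ and let $\epsilon'\to 0$).
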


\subsection{Patterson--Sullivan measures}
\label{s:patterson_sullivan}
We continue to assume that $X$ is a proper $\CAT(0)$ space. Fix a base point $o\in X$ and for $\xi \in \partial X$ let $\ell_\xi \colon [0,\infty) \rightarrow X$ denote the geodesic ray starting at $o$ and limiting to $\xi$. Then let 
$$
b_\xi(x) = \lim_{t \rightarrow \infty} \dist(\ell_{\xi}(t), x) - t
$$
denote the Busemann function based at $\xi$. 

Given a discrete subgroup $\Gamma \subset \Isom(X)$, a \emph{Patterson--Sullivan measure for $\Gamma$ of dimension $\delta$} is a  Borel probability measures $\mu$ on $\partial X$ such that for every $\gamma \in \Gamma$ the measures $\mu$, $\gamma_*\mu$ are absolutely continuous and 
$$
\frac{d\gamma_*\mu}{d\mu}(\xi)= e^{-\delta b_{\xi}(\gamma^{-1}o)} \quad \text{for $\mu$-a.e. $\xi$}. 
$$

Using Patterson's original construction for Fuchsian groups~\cite{Patterson}, one has the following existence result.

\begin{proposition}[Patterson]\label{prop:PS exist}
If $\Gamma < \Isom(X)$ is discrete and $\delta(\Gamma) < \infty$, then there exists a Patterson--Sullivan measure for $\Gamma$ of dimension $\delta(\Gamma)$. 
\end{proposition}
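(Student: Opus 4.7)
The plan is to follow Patterson's classical construction, which adapts verbatim to the proper $\CAT(0)$ setting since the only ingredients needed are the Busemann function, the compactness of $\overline{X} = X \cup \partial X$, and the triangle inequality. Fix the base point $o \in X$ and form the Poincaré series
$$
P_s(\Gamma, o) := \sum_{\gamma \in \Gamma} e^{-s \dist(o,\gamma o)},
$$
which converges for $s > \delta(\Gamma)$ and diverges for $s < \delta(\Gamma)$. For $s > \delta(\Gamma)$, define the probability measure on $\overline{X}$ by
$$
\nu_s := \frac{1}{P_s(\Gamma,o)} \sum_{\gamma \in \Gamma} e^{-s\dist(o,\gamma o)} \delta_{\gamma o}.
$$
Since $X$ is proper, $\overline{X}$ is compact, so the family $\{\nu_s\}_{s > \delta(\Gamma)}$ has weak-$*$ accumulation points as $s \searrow \delta(\Gamma)$. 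The goal is to show some such limit $\mu$ is supported on $\partial X$ and satisfies the Patterson--Sullivan transformation rule.

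First I would handle the \emph{divergence case}, where $P_s(\Gamma,o) \to \infty$ as $s \searrow \delta(\Gamma)$. In this case any weak-$*$ limit $\mu$ of $\nu_{s_n}$ (for $s_n \searrow \delta(\Gamma)$) is supported on $\partial X$: for any compact $K \subset X$, discreteness of $\Gamma$ gives $\# \{\gamma : \gamma o \in K\} < \infty$, so the $\nu_s$-mass of $K$ tends to $0$. In the \emph{convergence case}, where $P_{\delta(\Gamma)}(\Gamma, o) < \infty$, apply Patterson's trick: construct a nondecreasing function $h \colon [0,\infty) \to (0,\infty)$ such that
$$
\sum_{\gamma \in \Gamma} h(\dist(o,\gamma o)) e^{-\delta(\Gamma)\dist(o,\gamma o)} = \infty
$$
yet $h(r+c)/h(r) \to 1$ as $r \to \infty$ for every fixed $c \geq 0$. (Such $h$ is built piecewise on a sequence of intervals chosen so each contributes at least $1$ to the weighted sum, with $h$ growing subexponentially.) Replace $e^{-s\dist(o,\gamma o)}$ by $h(\dist(o,\gamma o)) e^{-s\dist(o,\gamma o)}$ in the definition of $\nu_s$, and the situation is reduced to the divergence case.

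To verify the transformation rule, I would compute directly: for $\gamma \in \Gamma$,
$$
\gamma_* \nu_s = \frac{1}{P_s(\Gamma,o)} \sum_{\beta \in \Gamma} e^{-s \dist(\gamma^{-1}o, \beta o)} \delta_{\beta o},
$$
so the Radon--Nikodym derivative at $\beta o$ equals $e^{-s(\dist(\gamma^{-1}o,\beta o) - \dist(o,\beta o))}$ (times a factor $h(\dist(o,\beta o))/h(\dist(\gamma^{-1}o,\beta o))$ in the convergence case). As $\beta o \to \xi \in \partial X$, the exponent converges to $-s\, b_\xi(\gamma^{-1} o)$ by definition of the Busemann function, and in the convergence case the $h$-ratio tends to $1$ by slow variation combined with $|\dist(o,\beta o) - \dist(\gamma^{-1}o, \beta o)| \leq \dist(o, \gamma^{-1}o)$. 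Passing to the weak-$*$ limit along $s_n \searrow \delta(\Gamma)$, and using that these Radon--Nikodym derivatives extend continuously to $\partial X$, yields the required identity
$$
\frac{d\gamma_* \mu}{d\mu}(\xi) = e^{-\delta(\Gamma) b_\xi(\gamma^{-1}o)}.
$$

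The main obstacle is the \emph{convergence case}: one must construct $h$ carefully enough that the slow-variation property survives the weak-$*$ limit and yields the clean transformation rule on $\partial X$ without correction factors. The key technical point is that slow variation of $h$ combined with the uniform bound $|\dist(o,\beta o) - \dist(\gamma^{-1}o,\beta o)| \leq \dist(o,\gamma^{-1}o)$ forces the $h$-ratio to converge uniformly to $1$ on sufficiently deep neighborhoods of any boundary point, so the limiting derivative is exactly the exponential of a Busemann cocycle, with no extra factor.
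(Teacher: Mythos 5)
Your proposal is correct and is precisely the argument the paper invokes: the paper gives no written proof but simply cites Patterson's original construction, which is the orbital-measure/weak-$*$ limit argument with the slowly varying correction function $h$ in the convergence case, exactly as you describe. The one point worth noting is that the continuous extension of $\beta o \mapsto \dist(\gamma^{-1}o,\beta o)-\dist(o,\beta o)$ to the Busemann cocycle on $\partial X$ is a genuine (standard) fact about proper $\CAT(0)$ spaces that you are implicitly using, and it holds here.
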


As a consequence of Link's~\cite{Link2018} version of the Hopf--Tsuji--Sullivan dichotomy, when the Poincar\'e series diverges at the critical exponent this measure is unique. 

\begin{theorem}[Link]\label{thm:PS ergodic case} Suppose $\Gamma < \Isom(X)$ is a non-elementary rank one discrete subgroup and 
$$
\sum_{\gamma \in \Gamma} e^{-\delta(\Gamma) \dist(o,\gamma o)} = \infty. 
$$
Then there exists a unique Patterson--Sullivan measure $\mu$ for $\Gamma$ of dimension $\delta(\Gamma)$ and the $\Gamma$-action on $(\partial X, \mu)$ is ergodic (i.e. every $\Gamma$-invariant measurable set has either zero or full measure). 
\end{theorem}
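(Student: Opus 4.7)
The plan is to adapt the classical Hopf--Tsuji--Sullivan dichotomy to the rank one $\CAT(0)$ setting, following the strategy that worked for Fuchsian groups, negatively curved manifolds, and (for Link) $\CAT(0)$ spaces with rank one isometries. Throughout, I fix a Patterson--Sullivan measure $\mu$ of dimension $\delta(\Gamma)$ produced by Proposition~\ref{prop:PS exist}; the core of the proof is to show that, under the divergence hypothesis, $\mu$ is ergodic for the $\Gamma$-action on $\partial X$, since uniqueness will follow formally from ergodicity.

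First I would establish that the conical (radial) limit set $\Lambda_c(\Gamma)$, defined as the set of $\xi \in \Lambda(\Gamma)$ such that some geodesic ray to $\xi$ comes within a bounded distance of $\Gamma o$ infinitely often, has $\mu$-full measure. This is the ``divergent half'' of the Hopf--Tsuji--Sullivan dichotomy: one uses a shadow lemma (a Sullivan-type estimate $\mu(\mathrm{shadow\ of\ }\Bc_R(\gamma o)) \asymp e^{-\delta(\Gamma)\dist(o,\gamma o)}$ at $\Gamma o$, valid because $\Gamma$ contains a rank one isometry and thus has a well-behaved shadow lemma in the rank one direction) together with a Borel--Cantelli argument driven by the divergence $\sum_\gamma e^{-\delta(\Gamma)\dist(o,\gamma o)} = \infty$ to produce, for $\mu$-a.e.\ $\xi$, infinitely many $\gamma \in \Gamma$ whose shadows contain $\xi$. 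Care is needed because in the $\CAT(0)$ setting shadow lemmas only hold along rank one directions; Link's argument handles this by working near a fixed rank one axis and exploiting the density of the ``rank one'' boundary in $\Lambda(\Gamma)$ under the non-elementary hypothesis.

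Next I would use the conical-limit full-measure property to run a Hopf-type argument for ergodicity. Form the Bowen--Margulis--Sullivan current on $\partial^2 X := (\partial X \times \partial X) \setminus \text{diag}$ by
\[
d\tilde m(\xi,\eta) = e^{\delta(\Gamma)(\xi | \eta)_o}\, d\mu(\xi)\, d\mu(\eta),
\]
which is $\Gamma$-invariant because of the transformation rule for $\mu$, and disintegrates (via the rank one geodesic flow) into a flow-invariant measure on (an appropriate subset of) the geodesic flow space $\Gamma\backslash(\partial^2 X \times \Rb)$. Since $\mu$-a.e.\ $\xi$ is conical, $\tilde m$-a.e.\ geodesic is recurrent to a compact set in the quotient, and a standard Hopf argument (Birkhoff averages of continuous compactly supported functions along forward and backward orbits coincide and depend only on the endpoints) shows the flow is ergodic, which in turn forces the $\Gamma$-action on $(\partial^2 X, \tilde m)$ to be ergodic, and therefore the $\Gamma$-action on $(\partial X, \mu)$ is ergodic as well.

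Finally, I would deduce uniqueness from ergodicity. Suppose $\mu'$ is another Patterson--Sullivan measure for $\Gamma$ of dimension $\delta(\Gamma)$. Since $\mu$-a.e.\ $\xi$ is conical and conical limit points carry positive $\mu'$-mass on every shadow containing them (by the shadow-lemma comparison applied to $\mu'$), one shows $\mu'$ is absolutely continuous with respect to $\mu$. The common transformation rule forces the Radon--Nikodym derivative $f := d\mu'/d\mu$ to satisfy $f(\gamma \xi) = f(\xi)$ for $\mu$-a.e.\ $\xi$ and every $\gamma \in \Gamma$; ergodicity then forces $f$ to be $\mu$-a.e.\ constant, and since both measures are probability measures $f \equiv 1$, i.e.\ $\mu' = \mu$. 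The main obstacle throughout is the first step: in the $\CAT(0)$ setting shadows can fail to behave well away from rank one directions, so one must carefully localize all estimates to rank one geodesics and use non-elementarity to propagate the estimates to the full limit set, which is precisely the delicate content of Link's argument.
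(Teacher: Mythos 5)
Your plan is correct and follows essentially the same route as the paper, which simply cites Link's Hopf--Tsuji--Sullivan dichotomy (Theorem 10.1 of \cite{Link2018}) for the full-measure conical limit set and ergodicity, and Proposition 4 of \cite{LinkPicaud2016} for uniqueness via ergodicity; your sketch is precisely the content of those references. The one caveat is that the hardest technical point --- making the shadow lemma and the Hopf argument work when shadows are only well-behaved along rank one directions --- is flagged in your outline but not carried out, which is exactly the part the paper outsources to Link.
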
 

\begin{proof} This follows from Theorem 10.1 in~\cite{Link2018} and Proposition 4 in~\cite{LinkPicaud2016}. The later reference assumes that $X$ is a manifold, but the same argument works in general. 
\end{proof} 

Given $r > 0$ and $x,y \in X$ the associated \emph{shadow} is 
$$
\Oc_r(x,y) : = \{ \xi \in \partial X : [x,\xi) \cap \Bc_r(y) \neq \emptyset \}.
$$

\begin{proposition}[The Shadow Lemma]\label{prop:shadow lemma} Suppose $\Gamma < \Isom(X)$ is a discrete subgroup and $\mu$ is a Patterson--Sullivan measure for $\Gamma$ of dimension $\delta$. For any $r > 0$, there exists $C_1=C_1(r) > 0$ such that:
$$
 \mu(\Oc_r(o,\gamma o)) \leq C_1 e^{-\delta \dist(o,\gamma o)} \quad \text{for all $\gamma \in \Gamma$}. 
$$
If, in addition, $\Gamma$ is a non-elementary rank one discrete subgroup, then for any $r > 0$ sufficiently large, there exists $C_2=C_2(r) > 0$ such that: 
$$
C_2 e^{-\delta \dist(o,\gamma o)} \leq \mu(\Oc_r(o,\gamma o)) \quad \text{for all $\gamma \in \Gamma$}. 
$$
\end{proposition}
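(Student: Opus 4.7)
My plan is to derive both estimates from a single conformal identity. Combining the Radon--Nikodym formula for the $\delta$-conformal density $\mu$ with the equivariance of shadows under $\gamma$ (so that $\gamma \cdot \Oc_r(\gamma^{-1}o, o) = \Oc_r(o, \gamma o)$), one obtains
\[
  \mu(\Oc_r(o, \gamma o)) = \int_{\Oc_r(\gamma^{-1}o,\, o)} e^{-\delta b_\xi(\gamma^{-1}o)} \,d\mu(\xi).
\]
Both bounds now reduce to controlling $b_\xi(\gamma^{-1}o)$ on the dual shadow $\Oc_r(\gamma^{-1}o, o)$, together with (for the lower bound) a uniform positive lower bound on the mass of that dual shadow.

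For the upper bound, the key geometric estimate is $b_\xi(\gamma^{-1}o) \geq \dist(o, \gamma o) - 2r$ for every $\xi \in \Oc_r(\gamma^{-1}o, o)$. Let $\tilde\ell \colon [0,\infty) \to X$ be the unit-speed ray from $\gamma^{-1}o$ to $\xi$ and let $s_0$ be a time with $\tilde\ell(s_0) \in \Bc_r(o)$; then $s_0 \geq \dist(o, \gamma o) - r$ by the triangle inequality. Since $\tilde\ell(s_0 + \,\cdot\,)$ and $\ell_\xi$ are two rays asymptotic to $\xi$, the bounded convex function $t \mapsto \dist(\ell_\xi(t), \tilde\ell(s_0+t))$ (Lemma~\ref{lem:following_flats}) is nonincreasing, hence at most $\dist(o, \tilde\ell(s_0)) \leq r$ for all $t \geq 0$. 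The triangle inequality then yields $\dist(\ell_\xi(t), \gamma^{-1}o) \geq (s_0+t) - r$, and taking $t \to \infty$ gives the claim. Plugging into the identity and using $\mu(\Oc_r(\gamma^{-1}o, o)) \leq 1$ produces the upper bound with $C_1 = e^{2\delta r}$.

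For the lower bound, the triangle inequality gives $b_\xi(\gamma^{-1}o) \leq \dist(o, \gamma o)$ for every $\xi$, so
\[
  \mu(\Oc_r(o, \gamma o)) \geq \mu(\Oc_r(\gamma^{-1}o, o))\, e^{-\delta \dist(o, \gamma o)}.
\]
The remaining task is to prove $\inf_{\gamma \in \Gamma} \mu(\Oc_r(\gamma^{-1}o, o)) \geq c(r) > 0$ for $r$ sufficiently large, and I expect this to be the main obstacle, where the non-elementary rank-one hypothesis enters essentially. My plan is a compactness/contradiction argument: if the infimum vanished along some sequence $\gamma_n$, then after passing to a subsequence either $\gamma_n^{-1}o$ is bounded (handled by discreteness of the orbit) or $\gamma_n^{-1}o \to \eta \in \partial X$, necessarily in $\Lambda(\Gamma)$. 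In the latter case, the rank-one hypothesis guarantees that $\eta$ and every $\xi \in \Lambda(\Gamma) \setminus \{\eta\}$ are joined by a bi-infinite geodesic, and for $r$ large enough this geodesic passes through $\Bc_{r/2}(o)$ for $\mu$-almost every $\xi$; Lemma~\ref{lem:HD_between_geod} then forces the ray from $\gamma_n^{-1}o$ to $\xi$ to enter $\Bc_r(o)$ for large $n$. Combined with the non-atomicity of $\mu$ on $\Lambda(\Gamma)$, which follows from ergodicity (Theorem~\ref{thm:PS ergodic case}), this would give $\mu(\Oc_r(\gamma_n^{-1}o, o)) \to 1 - \mu(\{\eta\}) > 0$, contradicting the assumption. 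The subtle step is justifying that the bi-infinite geodesic from $\eta$ passes near $o$ for $\mu$-most $\xi$ uniformly; this is where rank-one dynamics are essential, ruling out degenerate flat configurations of $\Lambda(\Gamma)$.
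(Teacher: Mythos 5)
Your upper bound is complete and correct, and it is a genuinely self-contained argument where the paper simply cites Roblin's Lemme 1.3: the identity $\mu(\Oc_r(o,\gamma o))=\int_{\Oc_r(\gamma^{-1}o,\,o)}e^{-\delta b_\xi(\gamma^{-1}o)}\,d\mu(\xi)$ is the right reading of the conformality relation (it is the interpretation under which the paper's Radon--Nikodym formula describes an actual conformal density), the bounded-convex-function argument giving $b_\xi(\gamma^{-1}o)\geq \dist(o,\gamma o)-2r$ on the dual shadow is sound, and $C_1=e^{2\delta r}$ follows since $\mu$ is a probability measure. The reduction of the lower bound to $\inf_{\gamma\in\Gamma}\mu(\Oc_r(\gamma^{-1}o,o))>0$ is likewise the standard and correct first step.

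The gap is exactly where you flagged it, and the mechanism you propose would not close it. First, ``$\Gamma$ has rank one'' only asserts the existence of one rank-one element; it does not imply that a point $\eta=\lim\gamma_n^{-1}o$ is joined by a bi-infinite geodesic to every (or even to $\mu$-a.e.) $\xi\in\Lambda(\Gamma)\smallsetminus\{\eta\}$. Rank-one groups can act on spaces containing periodic flats, and two non-antipodal points in the boundary of such a flat need not be joined by any geodesic. Second, and more seriously, even when a geodesic from $\eta$ to $\xi$ exists and meets $\Bc_{r/2}(o)$, Lemma~\ref{lem:HD_between_geod} does not force $[\gamma_n^{-1}o,\xi)$ to enter $\Bc_r(o)$: that lemma requires $\gamma_n^{-1}o$ to lie within bounded distance of a point of the geodesic $(\eta,\xi)$, and convergence $\gamma_n^{-1}o\to\eta$ in the cone topology provides no such bound. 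In a flat plane the points $x_n=(-n,\sqrt n)$ converge to the direction $\eta=(-1,0)$; the line joining $\eta$ to the direction $\xi=(1,0)$ is the $x$-axis through $o$, yet $[x_n,\xi)$ stays at distance $\sqrt n$ from $o$. Nothing in your argument rules out this configuration occurring inside a flat of a rank-one space with $\eta$ in the limit set. (A smaller issue: non-atomicity via Theorem~\ref{thm:PS ergodic case} is not available here, since the proposition does not assume divergence of the Poincar\'e series.) The essential rank-one input that rescues the lower bound --- and the reason the paper defers to Proposition~3 of Link--Picaud --- is the contracting (Morse) property of finitely many rank-one axes with pairwise distinct endpoints in $\Lambda(\Gamma)$: contraction is what guarantees that for $x$ far from $o$ the shadow $\Oc_r(x,o)$ contains a definite neighborhood of at least one such attracting fixed point, and those finitely many neighborhoods carry positive $\mu$-mass. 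Your sketch never invokes contraction, so the decisive hypothesis is not merely deferred but absent from the argument.
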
 

\begin{proof} For the first assertion see Lemme 1.3 in~\cite{roblin}, which assumes that $X$ is $\CAT(-1)$, but the same argument works for $\CAT(0)$ spaces. For the second assertion see Proposition 3 in~\cite{LinkPicaud2016}, which assumes that $X$ is a manifold, but the same argument works in general. 
\end{proof}

\section{Morse subsets}\label{sec:Morse subsets}

For the rest of the section suppose that $X$ is a proper $\CAT(0)$ space. In this section we establish some properties of Morse subsets.

We start by stating some equivalent characterizations in terms of the closest point projection map. Given a convex subset $\Cc$, let $\pi_{\Cc} \colon X \rightarrow \Cc$ denote the closest point projection (which is well defined, see for instance~\cite[Chapter II.2 Proposition  2.4]{BH}). A convex subset $\Cc \subset X$ is \emph{strongly contracting} if there exists $D > 0$ such that: if $\dist(x,\Cc) = r$, then 
$$
{\rm diam} \, \pi_{\Cc}(\Bc_r(x)) \leq D.
$$

Also recall that a geodesic triangle 
 $$
 [x,y] \cup [y, z] \cup [z,x]
 $$
 is \emph{$\sigma$-slim} if any side is contained the the $\sigma$-neighborhood of the other two sides. 

We have the following characterization of Morse subsets in $\CAT(0)$ spaces.

\begin{theorem}\label{thm:equiv of Morse} If $\Cc \subset X$ is convex, then the following are equivalent: 
\begin{enumerate} 
\item $\Cc$ is a Morse subset. 
\item $\Cc$ is strongly contracting.
\item There exists $\sigma \geq 0$ 
  such that: if $x \in X$ and $y \in \Cc$, then 
$$
\dist(\pi_{\Cc}(x), [x,y]) \leq \sigma. 
$$
\item There exists $\sigma' \geq 0$ 
  such that:  if $x \in X$, $y_1,y_2 \in \Cc$, and $\pi_{\Cc}(x) \in [y_1,y_2]$, then the geodesic triangle 
 $$
 [x,y_1] \cup [y_1, y_2] \cup [y_2,x]
 $$
 is $\sigma'$-slim.
\end{enumerate}
\end{theorem}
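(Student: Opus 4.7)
The plan is to establish the four conditions as equivalent by proving the three pairs $(1) \Leftrightarrow (2)$, $(2) \Leftrightarrow (3)$, and $(3) \Leftrightarrow (4)$ separately, each of which is a standard equivalence in the theory of Morse subsets of $\CAT(0)$ spaces. The pair $(3) \Leftrightarrow (4)$ will be the most elementary and will rely only on the Hausdorff-distance and convexity lemmas from the previous subsection.

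I would prove $(3) \Rightarrow (4)$ directly. Given $x \in X$ and $y_1, y_2 \in \Cc$ with $p := \pi_\Cc(x) \in [y_1, y_2]$, I would apply $(3)$ with $y = y_i$ to obtain points $q_i \in [x, y_i]$ satisfying $\dist(p, q_i) \leq \sigma$, and observe that $\dist(q_1, q_2) \leq 2\sigma$. By Lemma~\ref{lem:HD_between_geod}, the sub-geodesic $[q_i, y_i] \subset [x, y_i]$ lies within Hausdorff distance $\sigma$ of $[p, y_i] \subset [y_1, y_2]$, while $[x, q_1]$ and $[x, q_2]$ lie within Hausdorff distance $2\sigma$. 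Pasting these comparisons yields slimness with $\sigma' = 2\sigma$. The converse $(4) \Rightarrow (3)$ will be more subtle: the naive choice $y_1 = p$ produces a degenerate triangle from which slimness extracts no information, so I will instead choose $y_1 \in \Cc$ such that $p$ lies interior to $[y_1, y]$ and then argue via slimness that $p$ is close to $[x, y]$. The degenerate case in which no such $y_1$ exists (i.e. $\Cc$ is too thin past $p$) will be handled separately, noting that (3) holds trivially in this regime.

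For $(2) \Rightarrow (1)$: given an $(A, B)$-quasi-geodesic $\ell$ with endpoints in $\Cc$, I will partition $\ell$ into maximal sub-intervals on which $\ell$ lies outside $\Nc_D(\Cc)$ for some sufficiently large $D$. On each such sub-interval, strong contraction bounds how much $\pi_\Cc \circ \ell$ can move, and combined with the quasi-geodesic inequality this bounds the length of the sub-interval and hence the deviation of $\ell$ from $\Cc$. For $(1) \Rightarrow (2)$: if strong contraction failed, one would produce $x_n$ and $z_n \in \Bc_{r_n}(x_n)$ with $\dist(\pi_\Cc(x_n), \pi_\Cc(z_n)) \to \infty$, and the concatenated paths $[\pi_\Cc(x_n), x_n] \cup [x_n, z_n] \cup [z_n, \pi_\Cc(z_n)]$ would be quasi-geodesics with uniform constants (by $\CAT(0)$ variance at the projections, where the angle at each projection point is at least $\pi/2$) that contradict the Morse property.

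For $(2) \Leftrightarrow (3)$, I will walk along $[x, y]$ iteratively applying strong contraction in balls of radius $\dist(\ell(t_i), \Cc)$ and use convexity of $t \mapsto \dist(\ell(t), \Cc)$ from Lemma~\ref{lem:following_flats} to control the iteration, eventually forcing $\pi_\Cc \circ \ell$ to reach the vicinity of $p$ on $[x, y]$; conversely, applying (3) to both $(x, q)$ and $(z, p)$ with $q = \pi_\Cc(z)$, combined with $\CAT(0)$ variance at the projection points, will yield a uniform bound on $\dist(p, q)$. The main obstacle will be $(2) \Leftrightarrow (3)$, because naive applications of either condition yield only bounds of order $\dist(x, \Cc)$ rather than uniform constants; resolving this will require iterating the contraction and exploiting the interplay between convexity and the $\CAT(0)$ variance inequality at projection points.
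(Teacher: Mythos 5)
Your sketches of $(3)\Rightarrow(4)$ and of $(2)\Rightarrow(3)$ (iterated contraction along $[x,y]$, controlled by convexity of $t\mapsto\dist(\ell(t),\Cc)$) are in the same spirit as the paper. However, your treatment of $(4)\Rightarrow(3)$ misdiagnoses the situation. The choice $y_1=p:=\pi_{\Cc}(x)$, $y_2=y$ is \emph{not} degenerate and is exactly what the paper uses: condition (4) applies (since $p\in[p,y]$ trivially) and says the genuine triangle with vertices $x$, $p$, $y$ is $\sigma'$-slim. The information is extracted not at the vertex $p$ but at the point $u\in[x,p]$ with $\dist(u,p)=\sigma'$: since $\pi_{\Cc}(u)=p$, we have $\dist(u,[p,y])\geq\dist(u,\Cc)=\sigma'$, so slimness forces $\dist(u,[x,y])\leq\sigma'$ and hence $\dist(p,[x,y])\leq 2\sigma'$. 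Your proposed replacement --- choosing $y_1$ so that $p$ is interior to $[y_1,y]$ --- has its own gap even when such a $y_1$ exists: slimness applied to $p\in[y_1,y]$ only places $p$ within $\sigma'$ of $[x,y_1]\cup[x,y]$, and you give no way to exclude the first alternative.

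The more serious issues are in the directions the paper deliberately does not reprove: it cites Cashen for $(1)\Leftrightarrow(2)$ and Charney--Sultan for $(3)\Rightarrow(2)$, proving only $(2)\Rightarrow(3)$ and $(3)\Leftrightarrow(4)$ in detail. Your argument for $(1)\Rightarrow(2)$ does not work as stated: the concatenation $[\pi_{\Cc}(x),x]\cup[x,z]\cup[z,\pi_{\Cc}(z)]$ with $z\in\Bc_r(x)$, $r=\dist(x,\Cc)$, has length comparable to $r$ while its endpoints are at distance $\dist(\pi_{\Cc}(x),\pi_{\Cc}(z))$, and nothing prevents the latter from being far smaller than $r$ --- which is precisely the regime you must rule out. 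The angle conditions at the projection points give lower bounds on $\dist(x,\pi_{\Cc}(z))$ and $\dist(z,\pi_{\Cc}(x))$ but do not control the ratio of path length to endpoint separation, so the path need not be a quasi-geodesic with uniform constants. Since Morse does not imply strongly contracting outside the $\CAT(0)$ setting, any correct proof must use convexity in an essential way that your sketch does not. Likewise, your $(3)\Rightarrow(2)$ argument, as you yourself concede, only yields a bound of order $\dist(x,\Cc)$, and the promised mechanism for upgrading it to a uniform constant is not supplied. These two implications carry the real content of the theorem; you would need to either import the cited results, as the paper does, or actually reproduce their arguments.
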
 

\begin{remark}
Note that since closest point projections are equivariant with respect to isometries, if $\sigma$ satisfies (3) for $\Cc$ then it does as well for any translate of $\Cc$. 
\end{remark}

The equivalence (1) $\Leftrightarrow$ (2) is a theorem of
Cashen~\cite{Cashen2020}. Charney--Sultan~\cite[Theorem 2.14]{CS2015}
proved the above equivalences for geodesics (along with several other
equivalent conditions) and their argument for the implication (3)
$\Rightarrow$ (2) taken verbatim works for convex
subsets. Thus it suffices to prove (2) $\Rightarrow$ (3) and (3) $\Leftrightarrow$ (4).

\begin{figure}[h!]
  \centering
%
%
%

\begin{tikzpicture}[scale=1]

  \draw (0,0)--(4,2)--(11,2)--(7+.2,-1) -- node[pos=.05,yshift=20pt]
  {$\Cc$} cycle;

  \draw (4.5,5) coordinate (x) -- 
  coordinate[pos=.6] (xp) 
  (4.5,1) coordinate (pix) -- 
  coordinate[pos=.6] (yp) (7,.8) coordinate
  (y) to [out=150, in=-80] coordinate[pos=.2] (ypp) coordinate[pos=.4]
  (mid)
  coordinate[pos=.53] (xpp) (x);

  \begin{scope}[dashed]
    \draw[] (mid)--(pix) node[midway,xshift=5pt, yshift=-2pt] {$\delta$};
    \draw[] (yp)--coordinate[midway] (midy) (ypp) ; 
    \draw[] (xp)--coordinate[midway] (midx) (xpp) ; 
  \end{scope}

  \draw (xp)+(-.2,-.1) coordinate (nxp);
  \draw (pix)+(-.2,.1) coordinate (nxpix);
  \draw (yp)+(-.1,-.2) coordinate (nyp);
  \draw (pix)+(.1,-.2) coordinate (nypix);

  \draw [decorate,decoration={brace,amplitude=5pt}]
  (nxpix)--(nxp)
  node [midway,xshift=-10pt,yshift=0pt] {$\sigma$};
  \draw [decorate,decoration={brace,amplitude=5pt}]
  (nyp)--(nypix)
  node [midway,yshift=-10pt,xshift=0pt] {$\sigma$};

  \draw(midy)+(.1,0) coordinate (nmidy);
  \draw(midx)+(0,.1) coordinate (nmidx);

  \draw[->] (y)+(0,.7) node[above right] {$\leq \sigma$}
  to [out=-120, in=-20]
  (nmidy);

  \draw[->] (xpp)+(.1,.9) node[right] {$\leq\sigma$}
  to [out=200, in=110] (nmidx);

  \def\s{.05}
  \draw[fill] (x) circle (\s) node[above left] {$x$};
  \draw[fill] (pix) circle (\s) node[below left] {$\pi_{\Cc}(x)$};
   \draw[fill] (y) circle (\s) node[below right] {$y$};
   \draw[fill] (xp) circle (\s) node[above left] {$x'$};
   \draw[fill] (yp) circle (\s) node[below right] {$y'$};
   \draw[fill] (xpp) circle (\s) node[above right] {$x''$};
   \draw[fill] (ypp) circle (\s) node[above right] {$y''$};
\end{tikzpicture}

  \caption{The configuration of points in the proof of the implication (2) $\Rightarrow$ (3) in
  Theorem~\ref{thm:equiv of Morse}.}
  \label{fig:equiv_of_Morse}
\end{figure}
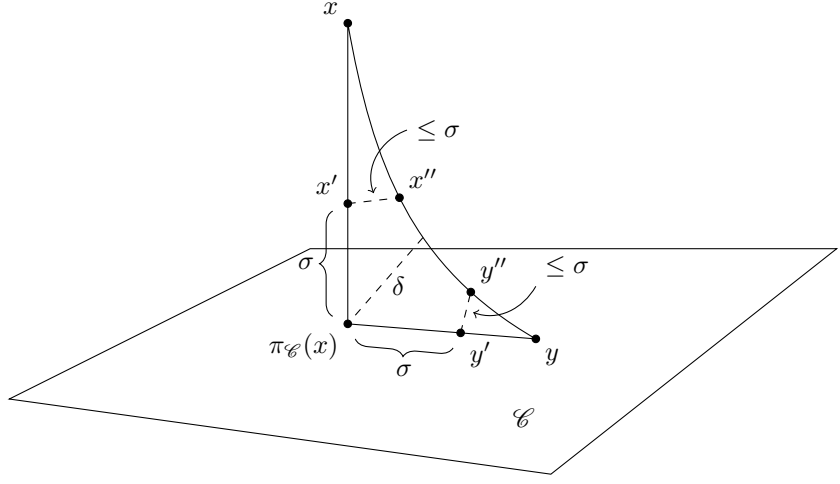

\begin{proof}(4) $\Rightarrow$ (3): We claim that $\sigma:=2\sigma'$ suffices. Fix $x \in X$ and $y \in \Cc$. By (4) the geodesic triangle 
 $$
 T:=[x,\pi_{\Cc}(x)] \cup [\pi_{\Cc}(x), y] \cup [y,x]
 $$
 is $\sigma'$-slim. If $\dist(\pi_{\Cc}(x),x) < \sigma'$, then there is nothing to prove. So we can assume that $\dist(\pi_{\Cc}(x),x) \geq \sigma'$. Fix $u \in[\pi_{\Cc}(x), x]$ with $\dist(\pi_{\Cc}(x),u_\epsilon) =\sigma'$. Then $\pi_{\Cc}(u_\epsilon) = \pi_{\Cc}(x)$ and so 
 $$
 \dist(u,  [\pi_{\Cc}(x), y]) \geq \dist(u, \Cc) = \sigma'.
 $$
Then, since $T$ is $\sigma'$-slim, we must have $\dist(u, [x,y]) < \sigma'$. Thus 
$$
\dist(\pi_{\Cc}(x), [x,y]) \leq \dist(\pi_{\Cc}(x),u)+\dist(u, [x,y]) <  2\sigma'. 
$$

(3) $\Rightarrow$ (4): We claim that any $\sigma'>2\sigma$ suffices. Fix $x \in X$ and $y_1, y_2 \in \Cc$ with  $\pi_{\Cc}(x) \in [y_1,y_2]$. By (3) there exists $u_i \in [x,y_i]$ with $\dist(u_i,\pi_{\Cc}(x)) \leq \sigma$. By Lemma~\ref{lem:HD_between_geod}, 
\begin{align*}
\dist^{\rm Haus}& ( [y_1, u_1], [y_1, \pi_{\Cc}(x)]) \leq \sigma, \quad \dist^{\rm Haus}( [y_2, u_2], [y_2,\pi_{\Cc}(x)]) \leq \sigma, \\
& \text{and} \quad \dist^{\rm Haus}( [x, u_1], [x,u_2]) \leq 2\sigma.
\end{align*}
Hence the geodesic triangle is $\sigma'$-slim for any $\sigma' > 2\sigma$.

(2) $\Rightarrow$ (3): Suppose $\Cc$ is strongly contracting with constant $D$. We claim that (3) is true for $\sigma = 19D$. Suppose not. Then there exist $\sigma > 19D$, $x \in X$, and $y \in \Cc$ with 
$$
\dist(\pi_{\Cc}(x), [x,y]) \geq \sigma. 
$$
By replacing $x$ with a point on the geodesic $[\pi_{\Cc}(x), x]$ we can assume that 
$$
\dist(\pi_{\Cc}(x), [x,y]) = \sigma. 
$$
Let $x' \in [\pi_{\Cc}(x),x]$ be the point with $\dist(\pi_{\Cc}(x),x') = \sigma$. By Lemma~\ref{lem:HD_between_geod} there exists $x'' \in [x,y]$ with 
$$
\dist(x',x'') \leq \sigma. 
$$
Likewise, let $y' \in [\pi_{\Cc}(x),y]$ be the point with $\dist(\pi_{\Cc}(x),y') = \sigma$ and fix $y'' \in [x,y]$ with 
$$
\dist(y',y'') \leq \sigma. 
$$
See Figure~\ref{fig:equiv_of_Morse}.

Since $x' \in [\pi_{\Cc}(x),x]$, we have $\pi_{\Cc}(x') = \pi_{\Cc}(x)$. So by the contracting property, 
$$
  \dist (\pi_{\Cc}(x),\pi_{\Cc}(x''))=\dist(\pi_{\Cc}(x'), \pi_{\Cc}(x'')) \leq D.
$$
Since 
$$
\dist(x'',\pi_{\Cc}(x))\geq \dist(\pi_{\Cc}(x), [x,y]) = \sigma, 
$$
we have 
$$
\dist(x'',\Cc) = \dist(x'', \pi_{\Cc}(x'')) \geq\dist(x'',\pi_{\Cc}(x))-  \dist (\pi_{\Cc}(x),\pi_{\Cc}(x'')) \geq \sigma - D. 
$$
Next pick $x_1,\dots, x_6$ in order along $[x'',y]$ such that $x_1 = x''$ and
$$
\dist(x_{j}, x_{j+1}) = \sigma - jD
$$
for $j \geq 1$. By the contracting property and induction, 
$$
\dist(\pi_{\Cc}(x), \pi_{\Cc}(x_j)) \leq j D \quad \text{and} \quad \dist(x_j,\Cc) \geq \sigma - jD > D
$$
for $j=1,\dots, 6$. (Notice that since $\sigma > 19D$, the estimate $\dist(x_j, y) \geq \dist(x_j,\Cc) \geq \sigma - jD$ implies that $x_1,\dots, x_6$ do indeed exist). 

Now
$$
\dist(x_1,y'') \leq \dist(x_1,x') + \dist(x',\pi_{\Cc}(x)) + \dist(\pi_{\Cc}(x), y') + \dist(y',y'') \leq 4 \sigma
$$
and 
$$
\dist(x_1,x_6) = \sum_{j=1}^5 \sigma - j D = 5 \sigma - 15D > 4\sigma. 
$$
Hence $x_6 \in [y'', y]$. So by Lemma~\ref{lem:HD_between_geod},
$$
\dist(x_6, [y',y]) \leq \dist^{\rm Haus}([y'',y], [y',y]) \leq \sigma. 
$$
Since 
$$
 \dist(x_6,\Cc) \geq \sigma - 6D,
 $$
there exists $z \in [x_6, \pi_{[y',y]}(x_6)]$ with $\dist(z, x_6) = \sigma-6D$. Then 
$$
\dist(z,\pi_{[y',y]}(x_6)) \leq 6D.
$$
Further, since $\dist(x_6,\Cc)\geq \sigma -6D$, the contracting property implies that
$$
\dist(\pi_{\Cc}(z), \pi_{\Cc}(x_6)) \leq D
$$
and so 
$$
\dist(\pi_{\Cc}(x),\pi_{\Cc}(z)) \leq \dist(\pi_{\Cc}(x), \pi_{\Cc}(x_6))+\dist(\pi_{\Cc}(x_6), \pi_{\Cc}(z)) \leq  7D. 
$$
On the other hand, 
$$
\dist( \pi_{[y',y]}(x_6), \pi_{\Cc}(z)) \leq \dist( \pi_{[y',y]}(x_6),z) + \dist(z,\pi_{\Cc}(z)) \leq 2 \dist( \pi_{[y',y]}(x_6),z) \leq 12D. 
$$
So 
$$
19D < \sigma = \dist(y', \pi_{\Cc}(x)) \leq \dist(\pi_{[y',y]}(x_6),\pi_{\Cc}(x)) \leq 19D
$$
and we have a contradiction. 
\end{proof} 

As a consequence of (3) in Theorem~\ref{thm:equiv of Morse}, Morse subsets have the following properties. 

\begin{proposition}\label{prop:projection to Morse subset} If $\Cc \subset X$ is a Morse subset and $\sigma \geq 0$ satisfies Theorem~\ref{thm:equiv of Morse}, then:
\begin{enumerate}
\item  If $x \in X$ and $y \in \Nc_\epsilon(\Cc)$, then 
$$
\dist(\pi_{\Cc}(x), [x,y]) < \epsilon + \sigma
$$
and
 $$
 \dist(x,y) > \dist(x, \pi_{\Cc}(x)) + \dist( \pi_{\Cc}(x), y) -2\epsilon-2\sigma. 
 $$
 \item If $x \in X$, and $y \in \Nc_\epsilon(\Cc)$, then the first point   of
   intersection between $[x,y]$ and $\overline{\Nc_{\epsilon+\sigma}(\Cc)}$   is within a distance $3\epsilon+3\sigma$ of $\pi_{\Cc}(x)$. 
\end{enumerate}
\end{proposition}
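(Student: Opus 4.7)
The plan is to prove both statements by elementary triangle-inequality manipulations, using Theorem~\ref{thm:equiv of Morse}(3) combined with Lemma~\ref{lem:HD_between_geod} to produce a single intermediate point $z\in[x,y]$ which is close to $\pi_{\Cc}(x)$; this point then drives the bounds in both (1) and (2).

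For (1), I would first fix $y_0\in\Cc$ with $\dist(y,y_0)\leq\epsilon$. Applying Theorem~\ref{thm:equiv of Morse}(3) to $[x,y_0]$ produces $z_0\in[x,y_0]$ with $\dist(\pi_{\Cc}(x),z_0)\leq\sigma$, and Lemma~\ref{lem:HD_between_geod} gives $z\in[x,y]$ with $\dist(z_0,z)\leq \dist^{\rm Haus}([x,y_0],[x,y])\leq\epsilon$. Combining, $\dist(\pi_{\Cc}(x),z)\leq\epsilon+\sigma$, which yields the first bound. For the second bound I would apply the triangle inequality twice to this $z$, producing $\dist(x,z)\geq \dist(x,\pi_{\Cc}(x))-(\epsilon+\sigma)$ and $\dist(z,y)\geq \dist(\pi_{\Cc}(x),y)-(\epsilon+\sigma)$; since $z\in[x,y]$ the two left sides sum to $\dist(x,y)$, and adding the inequalities gives the claim.

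For (2), let $w$ denote the first intersection of $[x,y]$ with $\overline{\Nc_{\epsilon+\sigma}(\Cc)}$. The point $z$ above satisfies $z\in\overline{\Nc_{\epsilon+\sigma}(\Cc)}$ because $\pi_{\Cc}(x)\in\Cc$ and $\dist(z,\pi_{\Cc}(x))\leq\epsilon+\sigma$; hence $w$ appears no later than $z$ along $[x,y]$. I would then bound $\dist(x,z)\leq \dist(x,\pi_{\Cc}(x))+\epsilon+\sigma$ by the triangle inequality, and bound
\[
\dist(x,w)\geq \dist(x,\pi_{\Cc}(x))-(\epsilon+\sigma)
\]
by noting that since $\pi_{\Cc}(x)$ is the nearest point of $\Cc$ to $x$,
\[
\dist(x,\pi_{\Cc}(x))\leq \dist(x,\pi_{\Cc}(w))\leq \dist(x,w)+\dist(w,\Cc)\leq \dist(x,w)+\epsilon+\sigma.
\]
Subtracting the two estimates yields $\dist(w,z)\leq 2\epsilon+2\sigma$, and a final triangle inequality with $\dist(z,\pi_{\Cc}(x))\leq\epsilon+\sigma$ gives $\dist(w,\pi_{\Cc}(x))\leq 3\epsilon+3\sigma$.

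Neither step involves a substantive obstacle; the argument is bookkeeping once the intermediate point $z$ is extracted from Theorem~\ref{thm:equiv of Morse}(3). The only mild case to flag is when $x$ already lies in $\overline{\Nc_{\epsilon+\sigma}(\Cc)}$, where $w=x$ and the bound is immediate from $\dist(x,\pi_{\Cc}(x))\leq\epsilon+\sigma$.
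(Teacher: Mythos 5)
Your proposal is correct and follows essentially the same route as the paper: both extract the intermediate point on $[x,y]$ near $\pi_{\Cc}(x)$ from Theorem~\ref{thm:equiv of Morse}(3) together with Lemma~\ref{lem:HD_between_geod}, and both use the same two inequalities (the triangle inequality to that point, and the fact that $\pi_{\Cc}(x)$ realizes $\dist(x,\Cc)$) to control the first entry point in (2). The only cosmetic difference is that the paper runs part (2) as a proof by contradiction while you argue directly, and you should pick $y_0$ with $\dist(y,y_0)<\epsilon$ to recover the strict inequalities as stated.
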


\begin{proof} Fix $x \in X$ and $y \in \Nc_\epsilon(\Cc)$.

Fix $y' \in \Cc$ with $\dist(y,y') < \epsilon$. Then fix $u' \in [x,y']$ with $\dist(\pi_{\Cc}(x),u') \leq \sigma$. By Lemma~\ref{lem:HD_between_geod} there exists $u \in [x,y]$ with $\dist(u,u') < \epsilon$. Then  
$$
\dist(\pi_{\Cc}(x), [x,y]) \leq \dist(\pi_{\Cc}(x),u')+ \dist(u',u) < \epsilon + \sigma
$$
and
 $$
 \dist(x,y)=\dist(x,u)+\dist(u,y) \geq \dist(x, \pi_{\Cc}(x)) + \dist( \pi_{\Cc}(x), y) -2\epsilon-2\sigma. 
 $$
So (1) is true. 

Let $v$ be the first point of intersection between $[x,y]$ and 
$\overline{\Nc_{\epsilon+\sigma}(\Cc)}$. 
Suppose for a contradiction that $\dist(v,\pi_{\Cc}(x)) > 3\epsilon+3\sigma$.
Note that
$u \in \Nc_{\epsilon+\sigma}(\Cc)$ 
and so $v \in [x,u]$. Since 
$$
\dist(u, \pi_{\Cc}(x)) < \epsilon + \sigma, 
$$
by the triangle inequality,
$$
\dist(u,v) > 2\epsilon+2\sigma.
$$
So 
\begin{align*}
\dist(x,\Cc) & = \dist(x,\pi_{\Cc}(x)) \geq \dist(x,u) - (\epsilon+\sigma)= \dist(x,v)+\dist(v,u) -\epsilon-\sigma \\
& > \dist(x,v)+(2\epsilon+2\sigma)-\epsilon-\sigma > \dist(x,v) + \epsilon+\sigma.
\end{align*}
However, 
$$
\dist(x,\Cc) \leq \dist(x,v) +\epsilon+\sigma
$$
and so we have a contradiction. Thus (2) is true. 

\end{proof}

The next lemma is used in the proof of Proposition~\ref{prop:Cond 2 holds after thickening} below. 

\begin{lemma}\label{lem:bounded implies really bounded} Suppose $\Cc \subset X$ is Morse and $\sigma > 0$ satisfies Theorem~\ref{thm:equiv of Morse}. If $\ell$ is a geodesic line in $X$ with $\ell \subset \Nc_R(\Cc)$ for some $R >0$, then $\ell \subset \overline{\Nc_\sigma(\Cc)}$ and $\ell$ is parallel to a geodesic line in $\Cc$. 
\end{lemma}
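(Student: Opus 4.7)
The approach combines the convexity of $\dist(\cdot,\Cc)$ along geodesics (Lemma~\ref{lem:following_flats}) with characterization (3) of Theorem~\ref{thm:equiv of Morse}. First, by Lemma~\ref{lem:following_flats} the function $f(t):=\dist(\ell(t),\Cc)$ is convex on $\Rb$, and since it is bounded above by $R$, it must be constant; call this value $c\in[0,R]$. Set $c(t):=\pi_{\Cc}(\ell(t))$, so $\dist(\ell(t),c(t))=c$ for every $t$. To sharpen the bound to $c\leq\sigma$, I apply characterization (3) with $x=\ell(0)$ and $y=c(T)\in\Cc$ for each $T>0$: there exists $p_T\in[\ell(0),c(T)]$ with $\dist(c(0),p_T)\leq\sigma$. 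Since $\dist(c(T),\ell(T))=c$ is uniformly bounded and $\ell(T)\to\eta_+:=\ell(\infty)$ in the cone topology, it follows that $c(T)\to\eta_+$; hence (using Lemma~\ref{lem:HD_between_geod} on long initial subarcs) the geodesics $[\ell(0),c(T)]$ converge uniformly on compact sets to the ray $\ell([0,\infty))$. By properness, a subsequence $p_{T_n}\to p\in\ell([0,\infty))$ with $\dist(c(0),p)\leq\sigma$. But $p\in\ell$ forces $\dist(p,c(0))\geq\dist(p,\Cc)=c$, so $c\leq\sigma$. This yields $\ell\subset\overline{\Nc_\sigma(\Cc)}$.

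For the parallel-line statement, I build $\ell'\subset\Cc$ as a limit of the segments $[c(-T),c(T)]$, which lie in $\Cc$ by convexity (after replacing $\Cc$ by its closure if necessary; this preserves the Morse constant). Let $m_T$ denote the midpoint of $[c(-T),c(T)]$. The $\CAT(0)$ convexity of the distance function between geodesics, applied to $[\ell(-T),\ell(T)]$ and $[c(-T),c(T)]$ at their midpoints $\ell(0)$ and $m_T$, gives
\[
\dist(\ell(0),m_T)\leq \tfrac{1}{2}\bigl(\dist(\ell(-T),c(-T))+\dist(\ell(T),c(T))\bigr)=c.
\]
Properness then provides a subsequence $m_{T_n}\to m\in X$. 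Since $c(\pm T)\to\eta_\pm:=\ell(\pm\infty)$ in the cone topology by the same argument as before, the segments $[c(-T_n),c(T_n)]$ converge to a geodesic line $\ell'\subset\Cc$ through $m$ with $\ell'(\pm\infty)=\eta_\pm$. After a suitable reparametrization of $\ell'$, the function $t\mapsto\dist(\ell(t),\ell'(t))$ is convex on $\Rb$ (convexity of distance between geodesics) and bounded (as $\ell$ and $\ell'$ share both endpoints at infinity), hence constant, so $\ell$ and $\ell'$ are parallel.

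The main technical point is controlling the convergence at infinity---establishing $c(\pm T)\to\eta_\pm$ from the bound $\dist(c(\pm T),\ell(\pm T))=c$, and extracting the limits of $[\ell(0),c(T)]$ and $[c(-T),c(T)]$ as the asserted ray and line. These are standard consequences of properness of $X$ and the cone topology on $\partial X$, but deserve care in the write-up.
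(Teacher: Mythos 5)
Your proof is correct, and for the first assertion it takes a genuinely different (and arguably cleaner) route than the paper. The paper never observes that $t\mapsto\dist(\ell(t),\Cc)$ is constant; instead it fixes the basepoint $x=\ell(-R-\sigma)$, approximates the ray $\ell|_{[-R-\sigma,\infty)}$ by the chords $[x,\pi_{\Cc}(\ell(n))]$, locates a point $\sigma$-close to $\pi_{\Cc}(x)$ at parameter $t_n\leq 0$ on each chord via Proposition~\ref{prop:projection to Morse subset}(1), and then pushes the bound forward along each chord by convexity (Lemma~\ref{lem:following_flats}) before passing to the limit. Your argument --- constancy of the bounded convex function $f(t)=\dist(\ell(t),\Cc)$, followed by a single application of characterization (3) to $[\ell(0),\pi_{\Cc}(\ell(T))]$ and a limit as $T\to\infty$ to produce a point of $\ell$ within $\sigma$ of $\Cc$ --- reaches the same conclusion with less bookkeeping, at the cost of invoking the standard (but unproved-in-the-paper) fact that $[\ell(0),c(T)]\to[\ell(0),\eta_+)$ locally uniformly when $c(T)\to\eta_+$; the paper needs essentially the same convergence fact for its chords, so nothing is lost. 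For the second assertion your construction is essentially the paper's: both extract a locally uniform limit of the segments $[\pi_{\Cc}(\ell(-T)),\pi_{\Cc}(\ell(T))]\subset\Cc$ after anchoring them near $\ell(0)$; the paper concludes parallelism directly from $\tilde\ell\subset\overline{\Nc_\sigma(\ell)}$ via the Hausdorff bound of Lemma~\ref{lem:HD_between_geod}, whereas you detour through endpoints at infinity --- your route works, but the Hausdorff bound on the approximating segments already gives boundedness of the distance function without discussing $\partial X$. Your caveat about replacing $\Cc$ by its closure is fair but applies equally to the paper's own proof, which also realizes the limit line inside $\Cc$ and uses $\pi_{\Cc}$ throughout; it is not a gap relative to the paper.
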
 

\begin{proof} For the first assertion, it suffices to fix a unit speed parametrization of $\ell$ and show that
  $\dist(\ell(0), \Cc) \leq \sigma$. 
  
  Let $x : = \ell(-R-\sigma)$ and $y_n : = \pi_{\Cc}(\ell(n))$. Then let $\ell_n : [-R-\sigma,b_n] \rightarrow X$ be the geodesic joining $x$ to $y_n$. Since $\dist(y_n, \ell(n)) \leq R$, we have 
$$
\xi: = \lim_{n \rightarrow \infty} y_n = \lim_{t \rightarrow\infty} \ell(t)
$$
in $\partial X$. Since $\ell|_{[-R-\sigma, \infty)}$ is the unique geodesic ray starting at $x$ and limiting to $\xi$, the geodesics $\ell_n$ converge locally uniformly to $\ell|_{[-R-\sigma,\infty)}$. By Proposition~\ref{prop:projection to Morse subset} part (1), there exists $t_n \in [-R-\sigma, b_n]$ such that 
$$
\dist( \ell_n(t_n), \pi_{\Cc}(x)) \leq \sigma. 
$$
Since $\dist(x, \pi_{\Cc}(x)) \leq R$ and $\ell_n(-R-\sigma) =x$, we have $t_n \leq 0$. Then Lemma~\ref{lem:following_flats} implies that 
$$
\ell_n([0,b_n]) \subset \ell_n([t_n,b_n]) \subset \overline{\Nc_\sigma(\Cc)}
$$
and so 
\begin{equation*}
\ell(0) = \lim_{n \rightarrow \infty} \ell_n(0) \in  \overline{\Nc_\sigma(\Cc)}.
\end{equation*}
Thus the first assertion is true.

For the second assertion, let $z_n^\pm := \ell(\pm n)$ and let $\tilde \ell_n : [c_n, d_n] \rightarrow \Cc$ be the geodesic joining $\pi_{\Cc}(z_n^-)$ and $\pi_{\Cc}(z_n^+)$. Since $\dist(z_n^\pm, \pi_{\Cc}(z_n^\pm)) \leq \sigma$, Lemma~\ref{lem:HD_between_geod} implies that 
$$
\tilde \ell_n \subset  \overline{\Nc_\sigma(\ell([-n,n]))}.
$$
So we can parametrize $\tilde \ell_n$ so that $\dist(\tilde \ell_n(0), \ell(0)) \leq \sigma$ and in particular $\{ \tilde \ell_n(0)\}$ is relatively compact in $X$. Then there exists a subsequence $\tilde \ell_{n_j}$ which converges locally uniformly to a geodesic line $\tilde \ell : \Rb \rightarrow \Cc$ with 
$$
\tilde \ell \subset  \overline{\Nc_\sigma(\ell)}.
$$
Thus $\ell$ and $\tilde \ell$ are parallel. 
\end{proof} 

Recall that the convex hull of a subset $A \subset X$ is the smallest convex subset of $X$ containing $A$. 

\begin{proposition}\label{prop:Cond 2 holds after thickening} Suppose $\Cc \subset X$ is Morse and $\sigma > 0$ satisfies Theorem~\ref{thm:equiv of Morse}. If $\Cc'$ is the convex hull of $\Cc$ and all geodesic lines in $X$ parallel to a geodesic line in $\Cc$, then 
\begin{enumerate}
\item $\Cc' \subset \overline{\Nc_{\sigma}(\Cc)}$. 
\item $\Cc'$ contains all geodesic lines in $X$ parallel to a geodesic line in $\Cc'$. 
\end{enumerate} 
\end{proposition}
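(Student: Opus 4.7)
The plan is to use Lemma~\ref{lem:bounded implies really bounded} as the main engine: it certifies that any geodesic line contained in some bounded neighborhood of $\Cc$ already lies in $\overline{\Nc_\sigma(\Cc)}$ and is parallel to a geodesic line in $\Cc$. Part (1) will then follow from convexity of distance functions, and part (2) will reduce to part (1) combined with transitivity of parallelism.

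For part (1), I would show that each ``generator'' of $\Cc'$ lies in $\overline{\Nc_\sigma(\Cc)}$. The inclusion $\Cc\subset\overline{\Nc_\sigma(\Cc)}$ is immediate. If $\ell$ is a geodesic line parallel to some $\tilde\ell\subset\Cc$, then by definition $t\mapsto\dist(\ell(t),\tilde\ell(t))$ is constant, so $\ell\subset \Nc_R(\Cc)$ for some $R>0$, and Lemma~\ref{lem:bounded implies really bounded} puts $\ell$ inside $\overline{\Nc_\sigma(\Cc)}$. Since $\dist(\cdot,\Cc)$ is continuous and convex along geodesics by Lemma~\ref{lem:following_flats}, the set $\overline{\Nc_\sigma(\Cc)}$ coincides with $\{x:\dist(x,\Cc)\leq\sigma\}$, which is closed and convex. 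Since $\Cc'$ is by definition the smallest convex subset containing these generators, the inclusion $\Cc'\subset \overline{\Nc_\sigma(\Cc)}$ follows.

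For part (2), let $\ell'$ be a geodesic line in $X$ parallel to some line $\ell^*\subset\Cc'$. By part (1), $\ell^*\subset \overline{\Nc_\sigma(\Cc)}\subset \Nc_{\sigma+1}(\Cc)$, so Lemma~\ref{lem:bounded implies really bounded} produces a geodesic line $\tilde\ell\subset\Cc$ parallel to $\ell^*$. Parallelism of geodesic lines is transitive: if $\dist(\ell_1(t),\ell_2(t))$ and $\dist(\ell_2(t),\ell_3(t))$ are bounded for suitable unit speed parametrizations, then $\dist(\ell_1(t),\ell_3(t))$ is bounded by the triangle inequality. Hence $\ell'$ is parallel to $\tilde\ell\subset\Cc$, and $\ell'\subset \Cc'$ by the defining property of $\Cc'$.

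The only mild obstacle is the book-keeping in the transitivity step, since the paper defines parallelism with respect to specific parametrizations; however, the two parametrizations of $\ell^*$ used in pairing it with $\ell'$ and with $\tilde\ell$ differ only by a translation and possibly an orientation reversal, so re-parametrizing $\ell'$ accordingly aligns them and the triangle inequality delivers the bound.
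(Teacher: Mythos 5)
Your proof is correct and follows essentially the same route as the paper: both parts rest on Lemma~\ref{lem:bounded implies really bounded} together with the convexity of $\overline{\Nc_\sigma(\Cc)}$. The only (harmless) difference is in part (2), where you apply the lemma to the line $\ell^*\subset\Cc'$ and then invoke transitivity of parallelism, whereas the paper applies the lemma directly to $\ell'$ after observing $\ell'\subset\Nc_R(\Cc')\subset\Nc_{R+\sigma+1}(\Cc)$, which avoids the parametrization book-keeping entirely.
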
 

\begin{proof} Lemma~\ref{lem:bounded implies really bounded} implies that $\overline{\Nc_{\sigma}(\Cc)}$ contains $\Cc$ and all geodesic lines in $X$ parallel to a geodesic line in $\Cc$. Since $ \overline{\Nc_{\sigma}(\Cc)}$ is convex,  see Lemma~\ref{lem:following_flats}, we then have $\Cc' \subset \overline{\Nc_{\sigma}(\Cc)}$. 

For (2), suppose that $\ell$ is a geodesic line in $X$ parallel to a geodesic line in $\Cc'$. Then there exists some $R > 0$ such that 
$$
\ell \subset \Nc_R(\Cc'),
$$
which implies that 
$$
\ell \subset \Nc_{R+\sigma+1}(\Cc).
$$
So Lemma~\ref{lem:bounded implies really bounded} implies that $\ell$ is parallel to a geodesic line in $\Cc$ and hence $\ell \subset \Cc'$. 
\end{proof}

\section{Convex cocompact subgroups} 
\label{s:convex_cocompact}

 In this section we establish some properties of convex cocompact subgroups. To that end, for the rest of the section assume that:
\begin{itemize}
\item  $X$ is a proper $\CAT(0)$ metric space, 
\item  $\Gamma \subset \Isom(X)$ is a non-elementary rank one discrete group with $\delta(\Gamma) < \infty$, 
\item $\Gamma_0 \subset \Gamma$ is a subgroup which acts cocompactly on a closed convex subset $\Cc \subset X$.
\end{itemize}

Using an argument from~\cite{Coornaert}, we first observe that the Poincar\'e series for $\Gamma_0$ 
diverges at its critical exponent. 

\begin{proposition}\label{prop:divergence in cocompact case} 
$\sum_{\gamma \in \Gamma_0} e^{-\delta(\Gamma_0) \dist(o,\gamma o)} = \infty. $
\end{proposition}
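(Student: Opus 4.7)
My plan is to adapt Coornaert's argument for cocompact actions on hyperbolic spaces. Let $N(n) := \#\{\gamma \in \Gamma_0 : \dist(o, \gamma o) \leq n\}$. The strategy is to derive a submultiplicative inequality for a shifted version of $N$, apply Fekete's lemma to promote the $\limsup$ definition of $\delta(\Gamma_0)$ to a genuine lower bound $N(n) \gtrsim e^{\delta(\Gamma_0) n}$, and then read off divergence of the Poincar\'e series. After moving $o$ inside $\Cc$ (which changes each term of the series by a bounded factor and so preserves divergence), cocompactness gives $R > 0$ with $\Cc \subset \Gamma_0 \cdot \Bc_R(o)$.

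The key point is a factorization argument that uses convexity of $\Cc$ in place of hyperbolicity. For $\gamma \in \Gamma_0$ with $\dist(o, \gamma o) \leq n + m + 2R$ and $\dist(o, \gamma o) \geq n + R$, let $p \in [o, \gamma o]$ be the point at distance $n + R$ from $o$. Convexity of $\Cc$ forces $p \in \Cc$, so some $\alpha \in \Gamma_0$ satisfies $\dist(\alpha o, p) \leq R$; then $\beta := \alpha^{-1}\gamma$ satisfies $\dist(o, \alpha o) \leq n + 2R$ and $\dist(o, \beta o) = \dist(\alpha o,\gamma o) \leq m + 2R$. For $\gamma$ with $\dist(o, \gamma o) < n + R$ one takes $\alpha = \gamma$, $\beta = e$. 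Since $\gamma = \alpha\beta$, the map $\gamma \mapsto (\alpha, \beta)$ is injective, yielding
\[
\tilde N(n + m) \leq \tilde N(n) \tilde N(m), \qquad \tilde N(n) := N(n + 2R).
\]

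Applying Fekete's lemma to the subadditive sequence $\log \tilde N(n)$ shows that $(\log \tilde N(n))/n$ converges to its infimum; since the $\limsup$ of this quantity equals $\delta(\Gamma_0)$, the common value must also be $\delta(\Gamma_0)$, so $\tilde N(n) \geq e^{\delta(\Gamma_0) n}$ for every $n$. Thus $N(n) \gtrsim e^{\delta(\Gamma_0) n}$ for large $n$, and divergence of the Poincar\'e series follows from the identity
\[
\sum_{\gamma \in \Gamma_0} e^{-\delta(\Gamma_0)\dist(o, \gamma o)} = \delta(\Gamma_0) \int_0^\infty N(t)\, e^{-\delta(\Gamma_0) t}\, dt
\]
(valid when $\delta(\Gamma_0) > 0$; when $\delta(\Gamma_0) = 0$ the conclusion is only non-trivial for infinite $\Gamma_0$, in which case it is immediate). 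The one step that takes real care is arranging the cocompactness shifts so that $\tilde N$ is genuinely submultiplicative rather than only almost-submultiplicative with awkward correction terms; the shift by $2R$ above is exactly what is needed to absorb both the slack at the split point $p$ and the offset of the chosen orbit point $\alpha o$.
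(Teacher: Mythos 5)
Your argument is correct, but it takes a genuinely different route from the paper. The paper proves this by invoking Patterson--Sullivan theory: it takes a Patterson--Sullivan measure $\mu_0$ for $\Gamma_0$ of dimension $\delta(\Gamma_0)$ supported on $\partial\Cc$, applies the upper bound in the Shadow Lemma to get $\mu_0(\Oc_r(o,\gamma o))\leq Ce^{-\delta(\Gamma_0)\dist(o,\gamma o)}$, and then runs Coornaert's covering argument (since $\Gamma_0$ acts cocompactly on $\Cc$, the shadows of orbit points in a bounded-width annulus cover $\partial\Cc=\mathrm{supp}(\mu_0)$, forcing $\#\{\gamma:\dist(o,\gamma o)\in[n,n+c]\}\gtrsim e^{\delta(\Gamma_0)n}$ and hence divergence). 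You instead bypass measures entirely: the split point $p\in[o,\gamma o]$ lies in $\Cc$ by convexity, cocompactness lets you absorb it into the orbit, and the resulting injection $\gamma\mapsto(\alpha,\beta)$ gives genuine submultiplicativity of the shifted counting function, so Fekete converts the $\limsup$ in the definition of $\delta(\Gamma_0)$ into the uniform lower bound $N(n)\gtrsim e^{\delta(\Gamma_0)n}$; the integral identity $\sum_\gamma e^{-\delta_0\dist(o,\gamma o)}=\delta_0\int_0^\infty N(t)e^{-\delta_0 t}\,dt$ (valid by Tonelli for $\delta_0>0$) then forces divergence since the integrand is bounded below. All the estimates check out: the $2R$ shift correctly absorbs both sources of slack, the pair $(\alpha,\beta)$ determines $\gamma=\alpha\beta$ so the map is injective, and you rightly flag the degenerate case $\delta(\Gamma_0)=0$. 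Your approach is more elementary and self-contained (no Patterson--Sullivan measure, no Shadow Lemma), at the cost of yielding only ball counts rather than the annulus counts the paper's method produces --- which is harmless here, since the integral identity shows ball counts suffice for divergence.
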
 

\begin{proof} By Proposition~\ref{prop:PS exist}, there exists a Patterson--Sullivan measure $\mu_0$ for $\Gamma_0$ of dimension $\delta(\Gamma_0)$ supported on $\partial \Cc \subset \partial X$. 
Pick $r \in \Nb$ such that $\Gamma_0 \cdot \Bc_r(o) =\Cc$. By the Shadow Lemma (Proposition~\ref{prop:shadow lemma}) there exists $C > 0$ such that 
$$
\mu_0(\Oc_r(o,\gamma o)) \leq C e^{-\delta(\Gamma_0)\dist(o,\gamma o)}
$$
for all $\gamma \in \Gamma_0$. Using this estimate and the fact that $\Gamma_0$ acts cocompactly on $\Cc$, we can argue exactly as in Th\'eor\`eme 7.2 and Corollaire 7.3 in~\cite{Coornaert} to deduce that 
\begin{equation*}
\sum_{\gamma \in \Gamma_0} e^{-\delta(\Gamma_0) \dist(o,\gamma o)} = \infty. \qedhere
\end{equation*}
\end{proof} 

Using a result from~\cite{Link2018}, we establish a critical exponent gap whenever $\Gamma_0$ has infinite index in $\Gamma$. 

\begin{proposition} 
  \label{prop:entropy_gap}
  $\Gamma_0 \subset \Gamma$ has infinite index if and only if $\delta(\Gamma_0) < \delta(\Gamma)$. 
\end{proposition}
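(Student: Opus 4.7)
My plan is to prove the two implications separately.

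For the direction $\delta(\Gamma_0) < \delta(\Gamma) \Rightarrow [\Gamma:\Gamma_0] = \infty$, I would argue by contrapositive. If $[\Gamma:\Gamma_0] = n < \infty$, I fix coset representatives $\gamma_1,\dots,\gamma_n$ and let $R := \max_i d(o,\gamma_i o)$. Every $\gamma = \gamma_i\gamma_0 \in \Gamma$ satisfies $d(o,\gamma_0 o) \leq d(o,\gamma o) + R$ by the triangle inequality, so
$$\#\{\gamma \in \Gamma : d(o,\gamma o) \leq T\} \leq n \cdot \#\{\gamma_0 \in \Gamma_0 : d(o,\gamma_0 o) \leq T + R\}.$$
Taking $\limsup$ in $T$ gives $\delta(\Gamma) \leq \delta(\Gamma_0)$, while $\delta(\Gamma_0) \leq \delta(\Gamma)$ is immediate from $\Gamma_0 \subset \Gamma$. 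Hence equality, which is the contrapositive of the desired implication.

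For the harder direction, I suppose $[\Gamma:\Gamma_0] = \infty$ and aim to show $\delta(\Gamma_0) < \delta(\Gamma)$. Since $\delta(\Gamma_0) \leq \delta(\Gamma)$ is automatic, I assume for contradiction that $\delta(\Gamma_0) = \delta(\Gamma)$. By Proposition~\ref{prop:divergence in cocompact case}, the Poincar\'e series for $\Gamma_0$ diverges at $\delta(\Gamma_0) = \delta(\Gamma)$, and since $\Gamma_0 \subset \Gamma$, the Poincar\'e series for $\Gamma$ also diverges at $\delta(\Gamma)$. This places us in the divergent-type regime of Theorem~\ref{thm:PS ergodic case}, yielding a unique Patterson--Sullivan measure $\mu$ of dimension $\delta(\Gamma)$ together with ergodicity of the $\Gamma$-action on $(\partial X, \mu)$. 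I would then invoke the appropriate finite-index theorem from \cite{Link2018}---the rank-one $\CAT(0)$ analogue of Roblin's classical result \cite{roblin}---which asserts that a non-elementary subgroup of a non-elementary rank-one discrete group of divergent type at the critical exponent, sharing the same critical exponent, must have finite index in the ambient group. This contradicts our infinite-index assumption.

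The main obstacle will be verifying that $\Gamma_0$ actually meets the hypotheses of Link's finite-index result---in particular, that $\Gamma_0$ is itself non-elementary rank one. If $\Cc$ is bounded, then $\Gamma_0$ is finite, so $\delta(\Gamma_0) = 0 < \delta(\Gamma)$ trivially (and $\Gamma_0$ is of infinite index since $\Gamma$ is non-elementary), and there is nothing to prove. If $\Cc$ is unbounded, cocompactness of the $\Gamma_0$-action on $\Cc$ together with the rank-one behavior inherited from $X$ should produce a rank-one axial element in $\Gamma_0$ whose axis lies inside $\Cc$, so that $\Lambda(\Gamma_0) = \partial \Cc$ has at least three points. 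A secondary but manageable hurdle is pinpointing the exact statement in \cite{Link2018} and confirming it specializes to the subgroup setting we need.
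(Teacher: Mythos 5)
Your first direction (finite index $\Rightarrow$ equal exponents, via coset representatives and the triangle inequality) is fine and matches the paper's (unstated but standard) reasoning. The problem is the hard direction: the ``finite-index theorem'' you want to invoke --- that a non-elementary subgroup of divergence type with the same critical exponent as the ambient divergence-type group must have finite index --- is not a theorem, and is in fact false. For example, if $\Gamma$ is the fundamental group of a closed hyperbolic $3$-manifold fibering over the circle and $\Gamma_0$ is the fiber subgroup (or, more generally, the deck group of a $\mathbb{Z}^d$-cover of a compact hyperbolic manifold, as in Rees's work), then $\Gamma_0$ is normal of infinite index, non-elementary, of divergence type, and $\delta(\Gamma_0)=\delta(\Gamma)$. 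So no argument that uses only ``divergence type $+$ equal exponents'' can succeed; the convex cocompactness of $\Gamma_0$ must enter beyond merely supplying divergence of the Poincar\'e series.

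What the paper actually uses from \cite{Link2018} is Proposition 8, whose hypothesis is that the limit set of the divergence-type subgroup is \emph{strictly} contained in $\Lambda(\Gamma)$ (note that in the Rees-type examples above the limit sets coincide, which is why they are not counterexamples to that statement). Consequently the real content of the paper's proof, which your proposal omits entirely, is the verification that $\Lambda(\Gamma_0)=\partial\Cc\subsetneq\Lambda(\Gamma)$: since $\Gamma_0$ has infinite index and acts cocompactly on $\Cc$, one finds $\gamma_n\in\Gamma$ with $\dist(\gamma_n o,\Cc)\to\infty$, normalizes by $\Gamma_0$ so that $\pi_{\Cc}(\gamma_n o)$ converges in $\Cc$ while $\gamma_n o\to\xi\in\Lambda(\Gamma)$, and then rules out $\xi\in\partial\Cc$ using convexity and the continuity of the closest-point projection. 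You would need to supply this (or an equivalent) geometric step and replace your finite-index theorem with the strict-limit-set-inclusion criterion for the proof to go through. Your secondary worry about $\Gamma_0$ being rank one is also a sign the cited result is not the right one: $\Cc$ may be a flat, in which case $\Gamma_0$ contains no rank one isometry, but Link's Proposition 8 only requires divergence type of the subgroup and strict inclusion of limit sets.
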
 

\begin{proof} If $\Gamma_0 < \Gamma$ has finite index, then $\delta(\Gamma_0) = \delta(\Gamma)$. Hence, $\delta(\Gamma_0) < \delta(\Gamma)$ implies that   $\Gamma_0 \subset \Gamma$ has infinite index. 

For the other direction, assume that $\Gamma_0 \subset \Gamma$ has infinite index. Recall that $\Lambda(G)$ denotes the limit set of a subgroup $G \subset \Isom(X)$. Since $\Gamma_0$ acts cocompactly on $\Cc$, we have $\Lambda(\Gamma_0) = \partial \Cc$. Then, using Proposition~\ref{prop:divergence in cocompact case} and a result of Link~\cite[Proposition 8]{Link2018}, it suffices to show that $\partial \Cc \subsetneq \Lambda(\Gamma)$. Since $\Gamma_0 \subset \Gamma$ has infinite index and $\Gamma_0$ acts cocompactly on $\Cc$, for each $n \geq 1$ there exists $\gamma_n \in \Gamma$ with $\dist(\gamma_n o, \Cc) \geq n$. Translating by elements of $\Gamma_0$ and passing to a subsequence we can suppose that 
$$
\pi_{\Cc}(\gamma_n o) \rightarrow x \in \Cc \quad \text{and} \quad \gamma_n o \rightarrow \xi \in \Lambda(\Gamma). 
$$

Suppose for a contradiction that $\xi \in \partial \Cc$. Then, by convexity, $[x, \xi) \subset \Cc$. Fix $y \in (x, \xi)$ and $y_n \in (\pi_{\Cc}(\gamma_n o), \gamma_n o)$ such that $y_n \rightarrow y$. Since $y \in \Cc$ and $y_n \rightarrow y$ we have $\pi_{\Cc}(y_n) \rightarrow y$. On the other hand, $y_n \in  (\pi_{\Cc}(\gamma_n o), \gamma_n o)$ and so we have $\pi_{\Cc}(y_n) = \pi_{\Cc}(\gamma_n o)$. So 
$$
y = \lim_{n \rightarrow \infty} \pi_{\Cc}(y_n) =\lim_{n \rightarrow \infty}  \pi_{\Cc}(\gamma_n o)=x,
$$
which is a contradiction. So $\xi \notin  \partial \Cc$. So $\partial \Cc \subsetneq \Lambda(\Gamma)$.
\end{proof}

\begin{proposition} If $\Gamma_0$ is almost malnormal, then for every $\epsilon > 0$ there exists $D(\epsilon) > 0$ such that 
$$
{\rm diam} \left( \Nc_\epsilon(\Cc) \cap 
\Nc_\epsilon(\alpha \Cc)\right) 
\leq D(\epsilon)
$$
for all $\alpha \in \Gamma - \Gamma_0$. 
\label{prop:diam_flat_intersection}
\end{proposition}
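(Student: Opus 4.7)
My plan is to argue by contradiction: if the conclusion fails, there exist $\epsilon > 0$, elements $\alpha_n \in \Gamma \smallsetminus \Gamma_0$, and points $x_n, y_n \in \Nc_\epsilon(\Cc) \cap \Nc_\epsilon(\alpha_n \Cc)$ with $\dist(x_n, y_n) \to \infty$; after relabeling, $\dist(y_n, o) \to \infty$. Fix a compact $K \subset \Cc$ with $\Gamma_0 K = \Cc$. The overall goal will be to produce a single element $\beta \in \Gamma \smallsetminus \Gamma_0$ together with infinitely many distinct elements of $\Gamma_0 \cap \beta \Gamma_0 \beta^{-1}$, contradicting almost malnormality.

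The central tool is a \emph{push-into-$K$} reduction: given a point $p$ far from $o$ in $\Nc_\epsilon(\Cc) \cap \Nc_\epsilon(\alpha \Cc)$ with $\alpha \notin \Gamma_0$, its projection to $\Cc$ is within $\epsilon$ of $p$, so cocompactness of $\Gamma_0$ on $\Cc$ provides $g \in \Gamma_0$ with $gp$ bounded. Then $gp \in \Nc_\epsilon(g\alpha \Cc)$, so $g\alpha\Cc$ meets a bounded region; writing such an intersection point as $g\alpha c$ with $c \in \Cc$ and choosing $\mu \in \Gamma_0$ with $\mu^{-1} c \in K$ shows that $g\alpha\mu$ sends a bounded point to a bounded point, so $\{(g\alpha\mu)\cdot o\}$ is bounded. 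I will apply this reduction twice. The first application, to the unbounded sequence $(y_n, \alpha_n)$, uses discreteness of $\Gamma$ to extract a subsequence on which $g_n \alpha_n \mu_n = \beta$ is constant, necessarily with $\beta \notin \Gamma_0$ (else $\alpha_n \in \Gamma_0$). Then $u_n := g_n x_n$ is an unbounded sequence in $\Nc_\epsilon(\Cc) \cap \Nc_\epsilon(\beta \Cc)$ for the \emph{fixed} element $\beta$. The second application, now to $(u_n, \beta)$, produces $h_n, \nu_n \in \Gamma_0$ and a further fixed $\gamma \in \Gamma \smallsetminus \Gamma_0$ with $h_n \beta \nu_n = \gamma$ for every $n$ in a deeper subsequence.

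From the identity $h_m \beta \nu_m = h_n \beta \nu_n$ one directly rearranges
\[
\beta^{-1}(h_m^{-1} h_n) \beta = \nu_m \nu_n^{-1} \in \Gamma_0,
\]
so $h_m^{-1} h_n \in \Gamma_0 \cap \beta \Gamma_0 \beta^{-1}$ for all $m, n$. Since the projections $\pi_\Cc(u_n)$ diverge to infinity and each $h_n$ sends them into $K$, the sequence $\{h_n\} \subset \Gamma_0$ is unbounded; fixing $m = 1$ then yields infinitely many distinct elements of $\Gamma_0 \cap \beta \Gamma_0 \beta^{-1}$, contradicting almost malnormality applied to $\beta \notin \Gamma_0$. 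The main obstacle will be the first reduction — replacing the varying translates $\alpha_n \Cc$ by a single translate $\beta \Cc$ via simultaneous left and right multiplication by $\Gamma_0$ while ensuring the extracted constant element $\beta$ remains outside $\Gamma_0$. After this, the second reduction is structurally identical, and the final extraction of a contradiction is a one-line algebraic manipulation.
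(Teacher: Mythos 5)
Your argument is correct, and it is worth noting that the paper itself does not give a proof of this proposition at all: it simply cites Hersonsky--Paulin \cite[Proposition 2.6]{HP2010} and asserts that the same argument works in the $\CAT(0)$ setting. Your write-up therefore supplies details the paper omits, and the strategy you use (contradiction, then cocompactness plus properness to normalize the configuration, then discreteness to extract a constant group element, then almost malnormality) is the standard one and is in the same spirit as the cited argument. The individual steps all check out: the ``push-into-$K$'' reduction is sound because $\dist(p,\pi_{\Cc}(p))\leq\epsilon$ and $\Gamma_0\cdot K=\Cc$ force $g\alpha\mu\cdot o$ into a ball of radius depending only on $\epsilon$ and $K$; proper discontinuity of the discrete group $\Gamma$ on the proper space $X$ makes $\{g_n\alpha_n\mu_n\}$ finite, so a constant subsequence $\beta$ exists, and $\beta\notin\Gamma_0$ since $g_n,\mu_n\in\Gamma_0$; the identity $g_n\alpha_n\Cc=g_n\alpha_n\mu_n\Cc=\beta\Cc$ correctly transports $x_n$ to an unbounded sequence $u_n\in\Nc_\epsilon(\Cc)\cap\Nc_\epsilon(\beta\Cc)$ (unbounded because $\dist(u_n,g_ny_n)=\dist(x_n,y_n)\to\infty$ while $g_ny_n$ stays bounded); and the final algebra $h_m^{-1}h_n=\beta(\nu_m\nu_n^{-1})\beta^{-1}\in\Gamma_0\cap\beta\Gamma_0\beta^{-1}$ together with $\dist(h_no,o)\to\infty$ produces infinitely many distinct elements of that intersection, contradicting almost malnormality applied to $\beta\notin\Gamma_0$. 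A pleasant byproduct of your argument is that it does not need to assume $\alpha\Cc\neq\Cc$ for $\alpha\notin\Gamma_0$; that degenerate possibility is ruled out by the same contradiction.
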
 

\begin{proof} 
Hersonsky--Paulin~\cite[Proposition 2.6]{HP2010} established this fact for $\CAT(-1)$ spaces and the same proof works for $\CAT(0)$ spaces. 
\end{proof}

\begin{observation}\label{obs:all annuli are about Q times exp}
Suppose there exist $N_0 \in \Nb$  and a positive polynomial $Q \colon \Rb \rightarrow (0,\infty)$ such that 
$$
\{ \gamma \in \Gamma_0 : n \leq \dist(o,\gamma o) < n + N_0\} \asymp Q(n)e^{\delta(\Gamma_0)n}. 
$$
Then for any $N_1, N_2 \in \Nb$ with $N_2 - N_1 \geq N_0$,
$$
\{ \gamma \in \Gamma_0 : n-N_1 \leq \dist(o,\gamma o) < n + N_2\} \asymp Q(n)e^{\delta(\Gamma_0)n}
$$
(with  implicit constants depending on $N_1, N_2$).
\end{observation}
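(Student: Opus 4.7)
The plan is to deduce both the upper and lower bounds from the hypothesis by an elementary partitioning argument, exploiting that a positive polynomial $Q$ varies by only bounded multiplicative factors over bounded intervals. Throughout, I would assume $n$ is large enough (the finitely many small values of $n$ can be absorbed into the implicit constants).

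For the lower bound, the condition $N_2 - N_1 \geq N_0$ gives the inclusion $[n, n+N_0) \subset [n-N_1, n+N_2)$. Hence the cardinality on the left is bounded below by the cardinality of the width-$N_0$ annulus, and applying the given asymptotic directly yields
$$
\#\{\gamma \in \Gamma_0 : n-N_1 \leq \dist(o,\gamma o) < n+N_2\} \gtrsim Q(n) e^{\delta(\Gamma_0) n}.
$$

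For the upper bound, I would cover $[n-N_1, n+N_2)$ by $M := \lceil (N_1+N_2)/N_0\rceil$ half-open intervals of the form $[n - N_1 + k N_0, n - N_1 + (k+1) N_0)$ for $k = 0, \dots, M-1$, enlarging the last block slightly if necessary so that the union contains $[n-N_1, n+N_2)$. The hypothesis bounds the count inside each piece by $\lesssim Q(n - N_1 + k N_0) e^{\delta(\Gamma_0)(n - N_1 + k N_0)}$. Since $\lvert (n - N_1 + k N_0) - n\rvert \leq \max(N_1, N_2) + N_0$ is uniformly bounded, and $Q$ is a fixed positive polynomial, we have $Q(n - N_1 + k N_0) \asymp Q(n)$ with constants depending only on $N_0, N_1, N_2, Q$; likewise $e^{\delta(\Gamma_0)(n - N_1 + k N_0)} \asymp e^{\delta(\Gamma_0) n}$. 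Summing $M = O(1)$ such comparable quantities produces the matching upper bound.

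The only point requiring verification is that $Q(m)/Q(n)$ is bounded above and below by positive constants uniformly in $n$ whenever $\lvert m-n \rvert \leq C$; for a fixed positive polynomial this is immediate for large $n$, so no real obstacle arises. In short, this observation is essentially bookkeeping: the asymptotic for narrow annuli propagates to annuli of bounded width and bounded shift, at the cost of constants depending on those widths and shifts.
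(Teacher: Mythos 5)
Your proposal is correct and takes essentially the same route as the paper: the paper likewise sandwiches $[n-N_1,n+N_2)$ between unions of width-$N_0$ blocks and compares the resulting sums (positive polynomials of the same degree as $Q$) with $Q(n)e^{\delta(\Gamma_0)n}$. Your lower bound via the single inclusion $[n,n+N_0)\subset[n-N_1,n+N_2)$ is a mild simplification of the paper's, but the substance is identical.
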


\begin{proof} It suffices to consider $n > N_1$. Let $m : = \left\lfloor \frac{N_2-N_1}{N_0} \right\rfloor$. Then 
$$
\bigcup_{k=0}^{m-1} [n-N_1+kN_0, n-N_1 + (k+1) N_0)  \subset [n-N_1, n+N_2) 
$$
and 
$$
[n-N_1, n+N_2) \subset \bigcup_{k=0}^{m} [n-N_1+kN_0, n-N_1 + (k+1) N_0).
$$
Hence 
$$
P_{m-1}(n) e^{\delta(\Gamma_0)n} \lesssim \{ \gamma \in \Gamma_0 : n-N_1 \leq \dist(o,\gamma o) < n + N_2\} \lesssim P_{m}(n) e^{\delta(\Gamma_0)n}
$$
where 
$$
P_j(n) = \sum_{k=0}^{j} Q(n-N_1+kN_0)e^{\delta(\Gamma_0)(-N_1+kN_0)} .
$$
Since $Q, P_{m-1}, P_{m}$ are positive polynomials with the same degree, the result follows. 
\end{proof}

Next we show that if $\Cc$ satisfies Assumptions~\eqref{item:Morse assumption} and~\eqref{item:parallel line assumption} in Theorem~\ref{thm:main}, then any geodesic which is close to $\Cc$ for a very long time is very close to $\Cc$ for a long time. 

\begin{proposition}\label{prop:close for a very long time means very close for a long time} Assume, in addition, that $\Cc$ is a Morse subset and $\Cc$ contains all geodesic lines in $X$ which are parallel to a geodesic line in $\Cc$. For any 
  $r > \epsilon > 0$ there exists $C=C(r,\epsilon)>0$ such that: if $\ell
  \colon [a,b] \rightarrow X$ is a geodesic segment with 
$$
\ell([a,b]) \subset 
\Nc_{r}(\Cc),
$$
then 
$$
\ell([a+C,b-C]) \subset 
\Nc_{\epsilon}(\Cc).
$$
\end{proposition}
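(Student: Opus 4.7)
The strategy is a contradiction/compactness argument, using the cocompact action of $\Gamma_0$ on $\Cc$ to normalize and then applying Lemma~\ref{lem:bounded implies really bounded} together with the parallel-line assumption.

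Suppose for a contradiction that no such constant $C$ exists for the given $r > \epsilon > 0$. Then for every $n \in \Nb$, there exists a geodesic segment $\ell_n\colon[a_n,b_n]\to X$ with $\ell_n([a_n,b_n])\subset \Nc_r(\Cc)$ and a point $t_n\in[a_n+n,b_n-n]$ with $\dist(\ell_n(t_n),\Cc)>\epsilon$. In particular, $t_n-a_n\geq n$ and $b_n-t_n\geq n$. Let $y_n:=\pi_\Cc(\ell_n(t_n))$, so $\dist(\ell_n(t_n),y_n)\leq r$.

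Next I would normalize the configuration. Since $\Gamma_0$ acts cocompactly on $\Cc$, there is $R>0$ such that $\Gamma_0\cdot \Bc_R(o)\supset\Cc$; pick $\alpha_n\in\Gamma_0$ with $\dist(\alpha_n^{-1}y_n,o)\leq R$, and replace $\ell_n$ by $\alpha_n^{-1}\ell_n$ (which is still a geodesic segment contained in $\Nc_r(\Cc)$ since $\Cc$ is $\Gamma_0$-invariant). After reparametrizing $\ell_n$ by the time shift $s\mapsto s-t_n$, I may further assume $t_n=0$, so that $a_n\to-\infty$ and $b_n\to\infty$, while $\ell_n(0)$ lies in the compact set $\overline{\Bc_{R+r}(o)}$ and $\dist(\ell_n(0),\Cc)>\epsilon$ for all $n$.

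Since the $\ell_n$ are $1$-Lipschitz with $\ell_n(0)$ in a fixed compact set and the domains exhaust $\Rb$, Arzel\`a--Ascoli yields a subsequence converging locally uniformly to a geodesic line $\ell\colon\Rb\to X$. Because the distance function $x\mapsto \dist(x,\Cc)$ is continuous, we get $\ell(\Rb)\subset\overline{\Nc_r(\Cc)}$ and $\dist(\ell(0),\Cc)\geq\epsilon>0$. Now apply Lemma~\ref{lem:bounded implies really bounded}: since $\Cc$ is Morse and $\ell$ lies in a bounded neighborhood of $\Cc$, the line $\ell$ is parallel to a geodesic line in $\Cc$. By Assumption~\eqref{item:parallel line assumption} (applied to $\Cc$) this forces $\ell\subset\Cc$, contradicting $\dist(\ell(0),\Cc)\geq\epsilon>0$.

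The main technical point is ensuring the limiting object is a bi-infinite geodesic \emph{line} with its anchor point $\ell(0)$ at definite distance from $\Cc$; this is why the hypothesis $t_n\in[a_n+n,b_n-n]$ (rather than some weaker condition) is essential and why the normalization via $\Gamma_0$ is carried out before taking the Arzel\`a--Ascoli limit.
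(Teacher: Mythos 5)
Your proposal is correct and follows essentially the same compactness-and-contradiction argument as the paper: normalize the bad segments by the cocompact $\Gamma_0$-action, pass to a locally uniform limit geodesic line in a bounded neighborhood of $\Cc$, and invoke Lemma~\ref{lem:bounded implies really bounded} together with the parallel-line hypothesis to force the limit line into $\Cc$, contradicting the definite distance of the anchor point from $\Cc$. The only cosmetic difference is that you track the contradiction via continuity of $x\mapsto\dist(x,\Cc)$ at $\ell(0)$, whereas the paper phrases it as $x_n\to\ell(0)\in\Cc$ forcing $x_n\in\Nc_\epsilon(\Cc)$ for large $n$.
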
 

\begin{proof} Suppose no such $C$ exists. Then for every $n \geq 1$ there
  exists a geodesic segment $\ell_n \colon [a_n,b_n] \rightarrow X$ such that 
$$
\ell_n([a_n,b_n]) \subset \Nc_{r}(\Cc) 
\quad \text{and} \quad \ell_n([a_n+n, b_n-n]) \not\subset 
\Nc_{\epsilon}( \Cc).
$$
Fix $x_n \in \ell_n([a_n+n, b_n-n]) \smallsetminus 
\Nc_{\epsilon}( \Cc)$. 

Reparameterizing each $\ell_n$, we can assume that $\ell_n(0)=x_n$. Since $\Gamma_0$ acts cocompactly on $\Cc$ and $x_n \in 
\Nc_{r}(\Cc)$, 
after translating by $\Gamma_0$ and passing to a subsequence, we can suppose that $x_n \rightarrow x \in X$. Since 
$$
0  \in [a_n+n, b_n-n],
$$
we have $a_n \rightarrow -\infty$ and $b_n \rightarrow \infty$. So passing
to another subsequence, we can suppose that $\ell_n$ converges locally
uniformly to a geodesic line $\ell \colon \Rb \rightarrow X$. 

Then 
$$
\ell \subset 
\Nc_{r+1}(\Cc),
$$
which implies that $\ell$ is parallel to a geodesic line in $\Cc$ (see Lemma~\ref{lem:bounded implies really bounded}) and hence
$\ell \subset \Cc$. Since $x_n \rightarrow x=\ell(0) \in \Cc$
we see that $x_n \in
\Nc_{\epsilon}(\Cc)$ 
for $n$ sufficiently large. Hence we have a contradiction. 
\end{proof} 

Recall that  $[\Cc]$ denotes the set of $\Gamma$-translates of $\Cc$. The
next result counts the elements of $[\Cc]$ which intersect a fixed annulus. 

\begin{proposition}  If $\delta(\Gamma_0) < \delta(\Gamma)$,   $\Cc$ is Morse, and there exists $N \in \Nb$ such that 
$$
\#\{ \gamma \in \Gamma : n \leq \dist(o, \gamma o) \leq n+N \} \asymp e^{\delta(\Gamma) n},
$$
then there exist $N' \in \Nb$ such that 
$$
 \#\{ \alpha\Cc \in [\Cc] : n <\dist(o, \alpha \Cc) \leq n+N'\}  \asymp e^{\delta(\Gamma)n}.
$$
\label{prop:counting_translates}
\end{proposition}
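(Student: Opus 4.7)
The strategy is to parametrize $[\Cc]$ as $\Gamma/H$ where $H:=\stab_\Gamma(\Cc)$, transfer the counting on $\Gamma$ (given by hypothesis) to one on $\Gamma/H$, and use the Morse geometry of $\Cc$ together with the critical-exponent gap $\delta(\Gamma_0)<\delta(\Gamma)$ of Proposition~\ref{prop:entropy_gap} to control the fiber sizes.

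\textbf{Step 1: Reducing to $\Gamma/H$.} First I would observe that $[H:\Gamma_0]<\infty$. Indeed, cocompactness of $\Gamma_0$ on $\Cc$ gives an $R>0$ such that $\Gamma_0\cdot o_0$ is $R$-dense in $\Cc$, where $o_0:=\pi_\Cc(o)$. For any $h\in H$, the point $ho_0\in\Cc$ lies within $R$ of some $\gamma_0(h)o_0$, so $\gamma_0(h)^{-1}h$ lies in the intersection of $\Gamma$ with the compact set $\{g\in\Isom(X):\dist(go_0,o_0)\leq R\}$. Discreteness makes this finite, so $H$ is a finite union of $\Gamma_0$-cosets, which gives $\delta(H)=\delta(\Gamma_0)<\delta(\Gamma)$ and the bijection $[\Cc]\leftrightarrow \Gamma/H$.

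\textbf{Step 2: Upper bound.} Fix a compact fundamental domain $K\subset\Cc$ for $\Gamma_0$ containing $o_0$, and let $R_1:=\sup_{k\in K}\dist(o,k)$. For each translate $\alpha\Cc$ with $\dist(o,\alpha\Cc)\in(n,n+N']$, write $\pi_{\alpha\Cc}(o)=\alpha c_\alpha$ and pick $\gamma_0\in\Gamma_0$ with $\gamma_0^{-1}c_\alpha\in K$. Then $\beta:=\alpha\gamma_0$ satisfies $\beta\Cc=\alpha\Cc$ and $\pi_{\beta\Cc}(o)\in\beta K$, hence $\dist(o,\beta o)\in[n-R_1,n+N'+R_1]$. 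The map $\alpha\Cc\mapsto\beta$ is injective on $[\Cc]$, so hypothesis~\eqref{item:Gamma_counting_assumption} of Theorem~\ref{thm:main} yields
\[
\#\{\alpha\Cc\in[\Cc]: \dist(o,\alpha\Cc)\in(n,n+N']\} \lesssim e^{\delta(\Gamma) n}.
\]

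\textbf{Step 3: Lower bound via fiberwise counting.} Let $R_2:=\dist(o,\Cc)$, so that for every $\gamma\in\Gamma$ the point $\gamma o$ lies in $\Nc_{R_2}(\gamma\Cc)$. Since isometric images of $\Cc$ are Morse with the same constant $\sigma$ (see the remark after Theorem~\ref{thm:equiv of Morse}), Proposition~\ref{prop:projection to Morse subset}(1) applied to $\gamma\Cc$ gives
\[
\dist(o,\gamma o)\geq \dist(o,\gamma\Cc)+\dist(\pi_{\gamma\Cc}(o),\gamma o)-2R_2-2\sigma.
\]
Writing $\gamma=\alpha h$ with $\alpha H=\gamma H$, this becomes $\dist(c_\alpha,ho)\leq \dist(o,\gamma o)-m_\alpha+C_0$, where $m_\alpha:=\dist(o,\alpha\Cc)$ and $C_0:=2R_2+2\sigma$. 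Fix $\delta'\in(\delta(\Gamma_0),\delta(\Gamma))$. Using that $H$ acts properly with $\delta(H)=\delta(\Gamma_0)<\delta'$, together with cocompactness of $H$ on $\Cc$, one obtains a uniform orbit-counting bound
\[
\#\{h\in H:\dist(ho,y)\leq M\}\lesssim e^{\delta' M}
\]
valid for all $y\in\Cc$; the uniformity in $y$ follows by translating $y$ into a compact fundamental domain via an element of $H$. Consequently, for each coset $\alpha H$ with $m_\alpha\leq n+N+R_2$,
\[
\#\{\gamma\in\alpha H:\dist(o,\gamma o)\in[n,n+N]\}\lesssim e^{\delta'(n-m_\alpha)}.
\]

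\textbf{Step 4: Extracting the lower bound.} Set $A_m:=\#\{\alpha H:m_\alpha\in(m-1,m]\}$. Summing Step 3 over cosets and invoking hypothesis~\eqref{item:Gamma_counting_assumption} gives
\[
e^{\delta(\Gamma) n}\asymp \#\{\gamma\in\Gamma:\dist(o,\gamma o)\in[n,n+N]\}\lesssim \sum_{m\leq n+N+R_2} A_m\, e^{\delta'(n-m)},
\]
so $\sum_{m\leq n+C_1} A_m e^{-\delta' m}\gtrsim e^{(\delta(\Gamma)-\delta') n}$. The Step 2 upper bound $A_m\lesssim e^{\delta(\Gamma) m}$ yields the matching upper estimate $\sum_{m\leq n}A_m e^{-\delta' m}\lesssim e^{(\delta(\Gamma)-\delta') n}$. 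Subtracting the two tail estimates, for $N'$ large enough so that $e^{-(\delta(\Gamma)-\delta')N'}$ is small compared to the implicit constants, the window sum $\sum_{m=n-N'+1}^{n} A_m e^{-\delta' m}$ is still $\gtrsim e^{(\delta(\Gamma)-\delta') n}$. Bounding this window sum above by $e^{-\delta'(n-N')}\cdot\#\{\alpha H:m_\alpha\in(n-N',n]\}$ yields the desired $\#\{\alpha\Cc:\dist(o,\alpha\Cc)\in(n-N',n]\}\gtrsim e^{\delta(\Gamma)(n-N')}$.

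\textbf{Main obstacle.} The delicate point is the uniform orbit-counting estimate in Step 3: one needs $\#\{h\in H:\dist(ho,y)\leq M\}\lesssim e^{\delta' M}$ with a constant independent of $y\in\Cc$. This relies on the fact that cocompactness of $H$ on $\Cc$ allows one to move $y$ by an element of $H$ into a fixed compact fundamental domain, reducing to the standard orbit-growth bound for $H$ at a fixed basepoint, which is controlled by $\delta(H)=\delta(\Gamma_0)$. The telescoping step that extracts an annular lower bound from the aggregate estimate also requires choosing $N'$ explicitly in terms of the gap $\delta(\Gamma)-\delta(\Gamma_0)$.
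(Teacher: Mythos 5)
Your proposal is correct, and its geometric core is the same as the paper's: both arguments hinge on the Morse projection estimate $\dist(o,\gamma o)\approx\dist(o,\gamma\Cc)+\dist(\pi_{\gamma\Cc}(o),\gamma o)$ (Proposition~\ref{prop:projection to Morse subset}) to turn the orbit count for $\Gamma$ into a convolution of the translate count with a $\Gamma_0$-type count, and both exploit the gap $\delta(\Gamma_0)<\delta(\Gamma)$ to extract an annular lower bound. The differences are in the bookkeeping. The paper avoids the stabilizer $H=\stab_\Gamma(\Cc)$ entirely by choosing representatives $\gamma_\alpha\in\alpha\Gamma_0$ and injecting $\Gamma\hookrightarrow[\Cc]\times\Gamma_0$, whereas you use the exact fibration $\Gamma\to\Gamma/H$, which costs you the (correct, and correctly proved) extra observation that $[H:\Gamma_0]<\infty$ so that $\delta(H)=\delta(\Gamma_0)$. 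More substantively, the paper outsources the extraction step to Lemma~3.3 of Hersonsky--Paulin, while you prove it from scratch by the telescoping argument on the partial sums $S_n=\sum_{m\le n}A_me^{-\delta'm}$: the two-sided bound $S_n\asymp e^{(\delta(\Gamma)-\delta')n}$ forces the window sum $S_n-S_{n-N'}$ to be comparable to $S_n$ once $N'$ beats the implicit constants. This is a self-contained and transparent substitute for the cited lemma, at the price of a slightly longer write-up; the uniform orbit-counting bound $\#\{h\in H:\dist(ho,y)\le M\}\lesssim e^{\delta'M}$ for $y\in\Cc$, which you correctly flag as the delicate point, does follow from cocompactness exactly as you describe.
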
 

We will use the following lemma from~\cite{HP2004}.

\begin{lemma}\cite[Lemma 3.3]{HP2004}\label{lem:HP let us count} For every
  $A>0$ and $\delta> \delta_0 > 0$ there exist $N' \in \Nb$ and $B > 0$ such that: if $\{b_n\}, \{c_n\} \subset \Rb_{>0}$ satisfy 
$$
b_n \leq Ae^{\delta n}, \quad c_n \leq Ae^{\delta_0 n}, \quad \text{and} \quad \sum_{k=0}^n b_k c_{n-k} \geq A^{-1} e^{\delta n}, 
$$
then 
$$
\sum_{k=1}^{N'} b_{n+k} \geq Be^{\delta n}.
$$
\end{lemma}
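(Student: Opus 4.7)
The plan is to apply the lower-bound hypothesis not at index $n$ but at $m = n + N'$, and then exploit the spectral gap $\delta > \delta_0$ to extract the window $\{b_{n+1},\ldots,b_{n+N'}\}$ from the resulting convolution. Concretely, I split
\[
  \sum_{k=0}^{n+N'} b_k c_{n+N'-k} \;=\; H_n(N') + T_n(N'),
\]
where the head $H_n(N') := \sum_{k=0}^{n} b_k c_{n+N'-k}$ involves only the ``old'' $b_k$'s and the tail $T_n(N') := \sum_{j=1}^{N'} b_{n+j}\, c_{N'-j}$ is a non-negative linear combination of the window of interest. The whole sum is at least $A^{-1} e^{\delta(n+N')}$ by hypothesis.

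The head is bounded using the two upper-bound hypotheses: the termwise estimate $b_k c_{n+N'-k} \leq A^2 e^{\delta_0(n+N') + (\delta-\delta_0)k}$ is maximised at $k = n$, and summing the geometric series in $k$ gives $H_n(N') \leq C_1 \, e^{\delta n + \delta_0 N'}$ for an explicit constant $C_1 = C_1(A, \delta, \delta_0)$; crucially, the $N'$-dependence here is $e^{\delta_0 N'}$ rather than $e^{\delta N'}$. The tail is bounded trivially: $c_{N'-j} \leq A e^{\delta_0(N'-1)}$ for each $j \in \{1,\ldots,N'\}$ yields $T_n(N') \leq A e^{\delta_0(N'-1)}\sum_{j=1}^{N'} b_{n+j}$.

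Subtracting the head estimate from the hypothesis and rearranging produces
\[
  \sum_{j=1}^{N'} b_{n+j} \;\geq\; \frac{e^{\delta_0}}{A^2}\bigl(e^{(\delta-\delta_0)N'} - AC_1\bigr)\, e^{\delta n}.
\]
Since $\delta > \delta_0$, the factor in parentheses is positive and unbounded as $N' \to \infty$, so any sufficiently large choice of $N'$ (depending only on $A, \delta, \delta_0$) furnishes the desired constant $B > 0$. The only conceptual step is the decision to evaluate the hypothesis at $n+N'$ rather than at $n$; once that is made, the exponential gap $\delta > \delta_0$ forces the head to be of strictly lower order than the required total $A^{-1}e^{\delta(n+N')}$, so the tail must absorb the bulk of the sum, and the quantitative dependence is read off directly.
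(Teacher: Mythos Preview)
Your argument is correct. The key observation---applying the convolution lower bound at index $n+N'$ rather than at $n$---is exactly what makes the proof work, and your estimates on the head and tail are accurate: the head is $O(e^{\delta n + \delta_0 N'})$ while the full sum is at least $A^{-1}e^{\delta n + \delta N'}$, so the gap $\delta>\delta_0$ forces the tail to carry weight $\gtrsim e^{\delta n}$ once $N'$ is large enough. One small point worth making explicit is that you are using all three hypotheses as conditions on the full sequences (in particular, the convolution lower bound at index $n+N'$, not just at $n$); this is the intended reading, as confirmed by how the lemma is applied in the proof of Proposition~\ref{prop:counting_translates}.

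As for comparison: the paper does not supply its own proof of this lemma---it is quoted directly from \cite[Lemma 3.3]{HP2004}---so there is nothing in the present paper to compare your argument against.
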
 

\begin{proof}[Proof of Proposition~\ref{prop:counting_translates}]

  We can assume that  $o \in \Cc$.  

Since $\Gamma_0$ acts cocompactly on $\Cc$, there exists $r > 0$ such that 
$$
\Cc \subset \Gamma_0 \cdot \Bc_r(o).
$$
Then for $\alpha\in \Gamma$, we have 
$$
\alpha \Cc \subset \alpha \Gamma_0 \cdot \Bc_r(o).
$$
and so there exists $\gamma_{\alpha} \in \alpha\Gamma_0$ such that 
$$
\dist( \pi_{\alpha \Cc}(o), \gamma_\alpha o) < r. 
$$
We pick the elements $(\gamma_\alpha)_{ \alpha \in \Gamma}$ so that 
\begin{equation}\label{eqn:picking carefully}
\alpha \Cc = \beta \Cc \implies \gamma_\alpha = \gamma_\beta. 
\end{equation} 

Next fix $\sigma \geq 0$ satisfying Theorem~\ref{thm:equiv of Morse} for $\Cc$ and hence every $\Gamma$-translate of $\Cc$. Since $\alpha o \in \alpha\Cc$, Proposition~\ref{prop:projection to Morse subset} implies that 
\begin{equation*}
\abs{\dist(o,\alpha o) -  \dist(o, \pi_{\alpha \Cc}(o)) - \dist(\pi_{\alpha \Cc}(o), \alpha o) } \leq  2\sigma
\end{equation*}
and thus
\begin{equation}
  \label{e:approx_dist_gamma_o}
\abs{\dist(o,\alpha o) - \dist(o, \alpha \Cc) - \dist(\gamma_\alpha o, \alpha o) } \leq  2\sigma+r. 
\end{equation}

Let 
$$
A_n : = \{ \alpha \in \Gamma : n \leq \dist(o, \alpha o) \leq n+N \}.
$$
Then, by hypothesis,
$$
a_n:=\#A_n\asymp e^{\delta(\Gamma) n}.
$$
Next let 
\begin{align*}
B_n & : = \{\alpha \Cc \in [\Cc] : n < \dist(o, \alpha \Cc) \leq n+1\}, \text{ and} \\
C_n & : = \{ \gamma \in \Gamma_0 : n - 2\sigma - r  -1< \dist(o,\gamma o) < n+N + 2\sigma +r \}. 
\end{align*}
Also let 
\begin{align*}
b_n & : = \# B_n \quad \text{ and} \quad c_n : = \# C_n. 
\end{align*}

\medskip

\noindent \textbf{Claim: } $a_n \leq \sum_{k=0}^{n+N-1} b_kc_{n-k}$.

\medskip

\noindent \emph{Proof of Claim:} Equation~\eqref{eqn:picking carefully} implies that the map 
\begin{equation}\label{eqn:Gamma into [Cc] and Gamma0}
\alpha \in \Gamma \mapsto (\alpha \Cc, \alpha^{-1}\gamma_{\alpha}) \in [\Cc] \times \Gamma_0
\end{equation} 
is injective. Further, if $\alpha \in A_n$, then Equation~\eqref{e:approx_dist_gamma_o} implies that
$$
n- 2\sigma -r < \dist(o, \alpha\Cc) + \dist(\gamma_{\alpha} o, \alpha o) \leq n+N+2\sigma + r.
$$
So if $\alpha \Cc \in
B_k$, then $\alpha^{-1} \gamma_{\alpha} \in C_{n-k}$. Further,
$$
\dist(o,\alpha \Cc) \leq \dist(o,\alpha o) < n+N
$$
and so $k \leq n+N-1$. Thus the map in Equation~\eqref{eqn:Gamma into [Cc] and Gamma0} provides an injection
$$
A_n \hookrightarrow \bigcup_{k=0}^{n+N-1} B_k \times C_{n-k},
$$
which implies the claim. \hfill $\blacktriangleleft$

\medskip

Since $\alpha \Cc = \gamma_\alpha \Cc$ and 
$$
\dist(o, \alpha \Cc) \geq \dist(o, \gamma_\alpha o) - r, 
$$
we have 
$$
b_n \leq \#\{ \gamma \in \Gamma : \dist(o,\gamma o) \leq n+1+r\} \lesssim e^{\delta(\Gamma)n}.
$$
Further, if $\delta(\Gamma_0)< \delta_0 < \delta(\Gamma)$, then by the definition of critical exponent
$$
c_n \lesssim e^{\delta_0 n}.
$$
Finally, by the claim,
$$
\sum_{k=0}^{n} b_kc_{n-k} \geq a_{n-N+1} \gtrsim e^{\delta(\Gamma)n}. 
$$
So the proposition follows from Lemma~\ref{lem:HP let us count}. 
\end{proof}

\section{Shadows of subspaces}
\label{s:shadow_lemma}

Let $X$, $\Gamma$, $\Cc$, $\Gamma_0$, $T_0$, and $Q$ be as in
Theorem~\ref{thm:main}.  Fix a basepoint $o \in \Cc$ and let $\mu$ be the unique Patterson--Sullivan measure for $\Gamma$ of dimension $\delta(\Gamma)$, see Theorem~\ref{thm:PS ergodic case}. 

In this section we estimate the $\mu$-measure of the shadows $\Sc_{T,\epsilon}(\Cc)$ introduced in Definition~\ref{def:shadows}.

\begin{theorem}[Subset Shadow Lemma]
  \label{thm:shadowing_flats} 
  For any $\epsilon > 0$ there exists $C > 1$ such that: if $T \geq 0$ and $\alpha \in \Gamma$, then 
  \[
\frac{1}{C} Q(T) e^{-\delta(\Gamma) (\dist(o,\alpha \Cc)+T)+\delta(\Gamma_0)T}  \leq \mu(\Sc_{T,\epsilon}(\alpha \Cc)) \leq C Q(T) e^{-\delta(\Gamma) (\dist(o,\alpha \Cc)+T)+\delta(\Gamma_0)T}.   \]

\end{theorem}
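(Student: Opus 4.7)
The plan is to sandwich the subset shadow $\Sc_{T,\epsilon}(\alpha\Cc)$ between unions of standard point-shadows $\Oc_r(o,\alpha\gamma o)$ for $\gamma\in\Gamma_0$ in a suitable annulus, then invoke the Shadow Lemma (Proposition~\ref{prop:shadow lemma}) together with counting assumption~(4) of Theorem~\ref{thm:main} and Observation~\ref{obs:all annuli are about Q times exp}. Set $p:=\pi_{\alpha\Cc}(o)$ and $D:=\dist(o,\alpha\Cc)=\dist(o,p)$, fix $\sigma$ satisfying Theorem~\ref{thm:equiv of Morse} for $\Cc$ (hence for every $\Gamma$-translate), and let $R$ be a cocompactness radius so $\alpha\Cc\subset\Nc_R(\alpha\Gamma_0\cdot o)$. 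A basepoint change via $\gamma_*\in\Gamma_0$ with $\dist(\alpha^{-1}p,\gamma_* o)\leq R$, combined with Observation~\ref{obs:all annuli are about Q times exp}, translates the hypothesized annular counting into $\#\{\gamma\in\Gamma_0:\dist(p,\alpha\gamma o)\in[T_1,T_2]\}\asymp Q(T_1)e^{\delta(\Gamma_0)T_1}$ whenever $T_2-T_1$ exceeds a fixed constant.

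For the upper bound, take $\xi\in\Sc_{T,\epsilon}(\alpha\Cc)$ with $\ell_\xi([a,b])\subset\Nc_\epsilon(\alpha\Cc)$ and $b-a\geq T$. Proposition~\ref{prop:projection to Morse subset}(2) applied to $[o,\ell_\xi(b)]$ shows that $\ell_\xi$ first meets $\overline{\Nc_{\epsilon+\sigma}(\alpha\Cc)}$ within $3\epsilon+3\sigma$ of $p$, so $a\geq D-O(\epsilon+\sigma)$ and $b\geq D+T-O(\epsilon+\sigma)$. I then stratify $\Sc_{T,\epsilon}(\alpha\Cc)=\bigcup_{n\geq 0}S_n$ by the integer $n$ with $b\in[D+T+n,D+T+n+1]$. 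For $\xi\in S_n$, cocompactness produces $\gamma=\gamma_\xi\in\Gamma_0$ with $\dist(\ell_\xi(b),\alpha\gamma o)\leq R+\epsilon$, so $\xi\in\Oc_{R+\epsilon}(o,\alpha\gamma o)$, while Proposition~\ref{prop:projection to Morse subset}(1) pins $\dist(p,\alpha\gamma o)$ to an $O(\epsilon+\sigma+R)$-window around $T+n$. The count of admissible $\gamma$ is $\asymp Q(T+n)e^{\delta(\Gamma_0)(T+n)}$ and each shadow has mass $\lesssim e^{-\delta(\Gamma)(D+T+n)}$ by Proposition~\ref{prop:shadow lemma}. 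Summing over $n$ and using the entropy gap $\delta(\Gamma_0)<\delta(\Gamma)$ from Proposition~\ref{prop:entropy_gap} collapses the resulting geometric series to
\[\mu(\Sc_{T,\epsilon}(\alpha\Cc))\lesssim Q(T)\,e^{-\delta(\Gamma)(D+T)+\delta(\Gamma_0)T}.\]

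For the lower bound, fix $r$ large enough for the lower half of Proposition~\ref{prop:shadow lemma}, set $C:=C(r+\sigma,\epsilon)$ via Proposition~\ref{prop:close for a very long time means very close for a long time}, and choose an offset $K$ (depending on $r,\sigma,\epsilon$) so large that for every $\gamma\in\Gamma_0$ with $\dist(p,\alpha\gamma o)\in[T+K,T+K+n_0]$ the inclusion $\Oc_r(o,\alpha\gamma o)\subset\Sc_{T,\epsilon}(\alpha\Cc)$ holds. To verify the inclusion: if $\xi$ lies in the shadow, $\ell_\xi$ meets $\Bc_r(\alpha\gamma o)$ at some time $s_2\approx\dist(o,\alpha\gamma o)$; Proposition~\ref{prop:projection to Morse subset}(1) forces $\ell_\xi$ to pass within $r+\sigma$ of $p$ at an earlier time $s_1\approx D$; convexity of $t\mapsto\dist(\ell_\xi(t),\alpha\Cc)$ (Lemma~\ref{lem:following_flats}) traps $\ell_\xi([s_1,s_2])\subset\overline{\Nc_{r+\sigma}(\alpha\Cc)}$; Proposition~\ref{prop:close for a very long time means very close for a long time} then shrinks this to $\ell_\xi([s_1+C,s_2-C])\subset\Nc_\epsilon(\alpha\Cc)$, whose length $s_2-s_1-2C$ exceeds $T$ by choice of $K$. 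The cover has bounded multiplicity because $\xi\in\Oc_r(o,\alpha\gamma_i o)$ for $i=1,2$ forces $\dist(o,\gamma_1^{-1}\gamma_2 o)\leq O(r+n_0+\sigma)$, so properness of the $\Gamma_0$-action yields $O(1)$ admissible partners to any fixed $\gamma_1$. Combining the lower half of Proposition~\ref{prop:shadow lemma}, the annular count $\asymp Q(T)e^{\delta(\Gamma_0)T}$ of admissible $\gamma$, and the bounded multiplicity produces the matching lower bound.

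The hardest part will be the Morse-bookkeeping tying the depth-in-neighborhood parameter $T$ to the orbit-distance parameter $\dist(p,\alpha\gamma o)$ with sharp constants in both directions: in the upper bound, extracting $a\approx D$ from the soft hypothesis that $\ell_\xi$ merely lies in $\Nc_\epsilon(\alpha\Cc)$ on a length-$T$ interval, and in the lower bound, verifying that the pair $(s_1,s_2)$ produced from the Morse projection and the shadow definition really does straddle a segment of length $\geq T$ after the $C$-trim. Propositions~\ref{prop:projection to Morse subset} and~\ref{prop:close for a very long time means very close for a long time}, together with the convexity of $\overline{\Nc_\epsilon(\alpha\Cc)}$ and of the distance function, are what rigidify the entry/exit geometry; the entropy gap from Proposition~\ref{prop:entropy_gap} is what makes the stratified upper bound converge at rate $Q(T)$.
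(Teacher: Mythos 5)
Your argument is correct and follows the same core strategy as the paper's: sandwich $\Sc_{T,\epsilon}(\alpha\Cc)$ between unions of classical orbit-point shadows, apply the Shadow Lemma (Proposition~\ref{prop:shadow lemma}), and feed in the annular counting hypothesis via Observation~\ref{obs:all annuli are about Q times exp} after a base-point change along $\alpha\Gamma_0$. Your lower bound is essentially the paper's (Lemmas~\ref{lem:S contains some shadows} and~\ref{lem:disjoint shadows in an annuli}): the only cosmetic difference is that the paper extracts a separated, hence pairwise disjoint, subfamily of shadows in the annulus, whereas you keep the whole annulus and invoke bounded multiplicity from discreteness; these are interchangeable.

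The one genuine divergence is in the upper bound. The paper (Lemma~\ref{lem:S covered by shadows}) shows that every $\eta$ in the subset shadow must pass within bounded distance of the \emph{single} point of $\alpha\Cc$ at distance $\max(0,T-2\sigma-4\epsilon)$ from $\pi_{\alpha\Cc}(o)$, obtained by fellow-traveling $\ell_\eta$ with the geodesic in $\alpha\Cc$ from $\pi_{\alpha\Cc}(o)$ to $\pi_{\alpha\Cc}(\ell_\eta(b))$; thus one annulus of orbit points at depth $\approx T$ covers the whole shadow and no infinite sum appears. You instead stratify by the exit depth $b-D\approx T+n$ and must sum $\sum_n Q(T+n)e^{\delta(\Gamma_0)(T+n)}e^{-\delta(\Gamma)(D+T+n)}$, which collapses to $Q(T)e^{-\delta(\Gamma)(D+T)+\delta(\Gamma_0)T}$ only because of the entropy gap $\delta(\Gamma_0)<\delta(\Gamma)$ (Proposition~\ref{prop:entropy_gap}, via the infinite-index hypothesis). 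That gap is available, so your route is valid, but note that the paper's upper bound does not depend on it; you trade a small geometric lemma for a convergent series and an extra hypothesis dependency. Minor bookkeeping to tidy up if you write this out: the strata must start slightly below $n=0$ (since $b$ can be as small as $D+T-O(\epsilon+\sigma)$), and the measure-zero set of $\xi$ with infinite exit time should be set aside.
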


The rest of the section is devoted to the proof of Theorem~\ref{thm:shadowing_flats}. We start by fixing some constants. Fix $\sigma \geq 0$ as in Theorem~\ref{thm:equiv of Morse} and then fix 
\begin{equation}\label{eqn: R > sigma + 2 epsilon}
R >\sigma+\epsilon
\end{equation}
large enough so that any number in $[R,\infty)$ satisfies the Shadow Lemma (Proposition~\ref{prop:shadow lemma}). Also fix $r > 0$ such that 
\begin{equation}\label{eqn:covering}
\Gamma_0 \cdot \Bc_r(o) \supset \Cc.
\end{equation} 

The following approximation result will be useful for both the lower and upper bound in Theorem~\ref{thm:shadowing_flats}.

\begin{lemma}\label{lem:existence of gamma_alpha} If $\alpha \in \Gamma$, then there exists $\gamma_\alpha \in \alpha\Gamma_0$ such that 
$$
\dist( \gamma_\alpha o, \pi_{\alpha \Cc}(o)) \leq r.
$$
Moreover, 
$$
\alpha \Cc \subset \bigcup_{\gamma \in \Gamma_0} \Bc_r(\gamma_\alpha \gamma o)
$$
and 
$$
\abs{\dist(o,\gamma_\alpha \gamma o) - (\dist(o,\gamma_\alpha o) + \dist(o, \gamma o))} \leq 2\sigma +2r
$$
for all $\gamma \in \Gamma_0$ 
\end{lemma}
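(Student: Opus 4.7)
The plan is to treat this as essentially an exercise in bookkeeping that packages the cocompactness hypothesis \eqref{eqn:covering} and the Morse projection estimate from Proposition~\ref{prop:projection to Morse subset}. There is no real obstacle; the only point requiring mild care is that the Morse constant $\sigma$ from Theorem~\ref{thm:equiv of Morse} transfers to every translate $\alpha\Cc$, which is exactly the content of the remark immediately following that theorem.

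To produce $\gamma_\alpha$, I would translate \eqref{eqn:covering} by $\alpha$ to get
$$
\alpha \Cc \subset \alpha \Gamma_0 \cdot \Bc_r(o) = \bigcup_{\gamma \in \alpha \Gamma_0} \Bc_r(\gamma o).
$$
Since $\pi_{\alpha\Cc}(o) \in \alpha \Cc$, there must be some $\gamma_\alpha \in \alpha\Gamma_0$ with $\dist(\gamma_\alpha o, \pi_{\alpha\Cc}(o)) \leq r$. The inclusion $\alpha\Cc \subset \bigcup_{\gamma \in \Gamma_0} \Bc_r(\gamma_\alpha \gamma o)$ is then immediate from $\gamma_\alpha \Gamma_0 = \alpha \Gamma_0$ and \eqref{eqn:covering} applied again.

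For the final distance estimate, the upper bound $\dist(o,\gamma_\alpha \gamma o) \leq \dist(o,\gamma_\alpha o) + \dist(o,\gamma o)$ follows from the triangle inequality and the isometry $\dist(\gamma_\alpha o, \gamma_\alpha \gamma o) = \dist(o,\gamma o)$. For the matching lower bound, I would apply Proposition~\ref{prop:projection to Morse subset}(1) to the Morse subset $\alpha\Cc$ (with constant $\sigma$) taking $x=o$ and $y = \gamma_\alpha\gamma o \in \alpha\Cc$, i.e. with $\epsilon = 0$, yielding
$$
\dist(o,\gamma_\alpha \gamma o) \geq \dist(o, \pi_{\alpha\Cc}(o)) + \dist(\pi_{\alpha\Cc}(o), \gamma_\alpha\gamma o) - 2\sigma.
$$
Using $\dist(\gamma_\alpha o, \pi_{\alpha\Cc}(o)) \leq r$ together with the triangle inequality twice yields
$$
\dist(o, \pi_{\alpha\Cc}(o)) \geq \dist(o,\gamma_\alpha o) - r \quad \text{and} \quad \dist(\pi_{\alpha\Cc}(o),\gamma_\alpha\gamma o) \geq \dist(o,\gamma o) - r,
$$
and substituting gives the lower bound $\dist(o,\gamma_\alpha o) + \dist(o,\gamma o) - 2\sigma - 2r$, as desired.
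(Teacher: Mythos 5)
Your proposal is correct and follows essentially the same route as the paper: pull back $\pi_{\alpha\Cc}(o)$ into $\Cc$ via \eqref{eqn:covering} to find $\gamma_\alpha$, and then combine the Morse projection estimate (Proposition~\ref{prop:projection to Morse subset}(1) applied to $\alpha\Cc$) with the triangle inequality and the $r$-approximation $\dist(\gamma_\alpha o,\pi_{\alpha\Cc}(o))\leq r$ to get the $2\sigma+2r$ bound. The only cosmetic difference is that you split the final estimate into separate upper and lower bounds while the paper writes it as a single two-step absolute-value inequality; the content is identical.
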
 

\begin{proof} Notice that $\alpha^{-1}  \pi_{\alpha \Cc}(o) \in \Cc$. So by Equation~\eqref{eqn:covering}, there exists $\gamma \in \Gamma_0$ with $\dist(\gamma o, \alpha^{-1}  \pi_{\alpha \Cc}(o)) \leq r$. Let $\gamma_\alpha : = \alpha \gamma$. Then 
$$
\dist( \gamma_\alpha o, \pi_{\alpha \Cc}(o)) \leq r.
$$
From Equation~\eqref{eqn:covering}, 
$$
\alpha \Cc \subset \alpha\Gamma_0 \cdot \Bc_r(o) = \gamma_\alpha\Gamma_0 \cdot \Bc_r(o) =\bigcup_{\gamma \in \Gamma_0} \Bc_r(\gamma_\alpha \gamma o)
$$
and by Proposition~\ref{prop:projection to Morse subset}
\begin{align*}
& \abs{\dist(o,\gamma_\alpha \gamma o) - (\dist(o,\gamma_\alpha o) + \dist(o, \gamma o))} \\
& \quad \leq 2r +\abs{\dist(o,\gamma_\alpha \gamma o) - (\dist(o,\pi_{\alpha \Cc}(o)) + \dist(\pi_{\alpha \Cc}(o), \gamma_\alpha\gamma o))} \leq 2\sigma + 2r. \qedhere
\end{align*}

\end{proof}

\subsubsection{The upper bound} 

We start by relating the set $\Sc_{T,\epsilon}(\alpha \Cc)$ to standard shadows. 

\begin{lemma}\label{lem:S covered by shadows} If $\alpha \in \Gamma$, then 
$$
\Sc_{T,\epsilon}(\alpha  \Cc)
\subset \bigcup \left\{ \Oc_R(o,x) : x \in \alpha \Cc, \, \dist(x,\pi_{\alpha \Cc}(o)) = \max(0,T-2 \sigma-4 \epsilon)\right\}. 
$$
\end{lemma}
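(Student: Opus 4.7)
The plan is to produce the required $x$ as the point at arc-length distance $D := \max(0, T-2\sigma-4\epsilon)$ from $\pi_{\alpha\Cc}(o)$ along the geodesic segment in $\alpha\Cc$ from $\pi_{\alpha\Cc}(o)$ to $\pi_{\alpha\Cc}(\ell_\eta(b))$, where $[a,b]$ is an interval witnessing $\eta \in \Sc_{T,\epsilon}(\alpha\Cc)$ (so $b - a \geq T$ and $\ell_\eta([a,b]) \subset \Nc_\epsilon(\alpha\Cc)$). Writing $\Cc' := \alpha\Cc$, $p_0 := \pi_{\Cc'}(o)$, and $p_b := \pi_{\Cc'}(\ell_\eta(b))$, I will need to verify two things: (i) $\dist(p_0, p_b) \geq D$, so that such a point exists on $[p_0, p_b]$, and (ii) the chosen $x$ lies within distance $R$ of the ray $\ell_\eta([0,\infty))$. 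Convexity of $\Cc'$ then guarantees $[p_0, p_b] \subset \Cc'$ for free.

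For (i) I plan to use only the triangle inequality. Since $\ell_\eta(b) \in \Nc_\epsilon(\Cc')$, one has $\dist(p_b, \ell_\eta(b)) \leq \epsilon$, and combined with $\dist(o, p_0) = \dist(o, \Cc')$ the triangle inequality yields
\[
b = \dist(o, \ell_\eta(b)) \leq \dist(o, \Cc') + \dist(p_0, p_b) + \epsilon.
\]
Similarly $\ell_\eta(a) \in \Nc_\epsilon(\Cc')$ gives $a \geq \dist(o, \Cc') - \epsilon$. Together with $b \geq a + T$, this would produce $\dist(p_0, p_b) \geq T - 2\epsilon$, which is at least $D$ because $\sigma, \epsilon \geq 0$.

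For (ii) I would invoke Proposition~\ref{prop:projection to Morse subset}(1) with $y = \ell_\eta(b)$ to place $p_0$ within $\epsilon + \sigma$ of $[o, \ell_\eta(b)] = \ell_\eta([0,b])$, and hence within $\epsilon + \sigma$ of the ray; meanwhile $p_b$ is within $\epsilon$ of $\ell_\eta(b) \in \ell_\eta([0,\infty))$. The ray is convex, and by Lemma~\ref{lem:following_flats} the $(\epsilon+\sigma)$-neighborhood of a convex set is convex, so the entire segment $[p_0, p_b]$ lies in that neighborhood. Since $R > \sigma + \epsilon$ by \eqref{eqn: R > sigma + 2 epsilon}, this forces $\eta \in \Oc_R(o, x)$.

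I do not anticipate a serious obstacle; the argument reduces to the triangle inequality together with the standard convexity of neighborhoods of convex sets in $\CAT(0)$ spaces. The only edge case worth flagging, $T = 0$, requires no separate treatment: then $D = 0$, we are forced to take $x = p_0$, and the argument for (ii) already places $p_0$ within $\epsilon + \sigma < R$ of the ray.
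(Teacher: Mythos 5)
Your argument is correct and follows essentially the same route as the paper's: the same choice of $x$ on the geodesic in $\alpha\Cc$ from $\pi_{\alpha\Cc}(o)$ to $\pi_{\alpha\Cc}(\ell_\eta(b))$, with Proposition~\ref{prop:projection to Morse subset}(1) placing $\pi_{\alpha\Cc}(o)$ within $\epsilon+\sigma$ of the ray and convexity carrying that bound to $x$ (the paper phrases this as a Hausdorff estimate via Lemma~\ref{lem:HD_between_geod} with an auxiliary time $t_\star$, while you appeal one-sidedly to Lemma~\ref{lem:following_flats} and get the length bound by a direct triangle inequality; these are cosmetic differences). The only nit is in your step (i): $T-2\epsilon$ can be negative while $D=0$, so the conclusion should be stated as $\dist(p_0,p_b)\geq\max(0,T-2\epsilon)\geq D$, which is what your inequalities actually give.
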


\begin{proof} 
Fix $\eta \in \Sc_{T,\epsilon}(\alpha  \Cc)$. Let $\ell_\eta \colon[ 0,\infty) \rightarrow X$ be the geodesic ray starting at $o$ and limiting to $\eta$. Then let
$$
(a,b) : = \left\{ t \geq 0 : \ell_\eta(t) \in 
\Nc_\epsilon(\alpha \Cc)
\right\}
$$
(which is indeed an interval by Lemma~\ref{lem:following_flats}). Then $b-a \geq T$. 

Proposition~\ref{prop:projection to Morse subset}(1) implies that $\ell_\eta|_{[0,b]}$ intersects 
$ \Bc_{\sigma+\epsilon}(\pi_{\alpha \Cc}(o))$. 
Let 
$$
t_\star : = \inf\{ t \in [0,b] : \ell_\eta(t) \in \Bc_{\sigma+\epsilon}(\pi_{\alpha \Cc}(o))\}.
$$
Notice that 
$$
\dist(o, \alpha \Cc) = \dist(o,\pi_{\alpha \Cc}(o))  \geq t_\star - \sigma - \epsilon
$$
and 
$$
\dist(o, \alpha \Cc) \leq \dist(o,\ell_\eta(a)) + \epsilon = a + \epsilon.
$$
So 
$$
t_\star \leq a + \sigma+2 \epsilon.
$$

Next let $\ell \colon [0,b''] \rightarrow X$ denote the geodesic joining $\pi_{\alpha \Cc}(o)$ to $y:=\pi_{\alpha  \Cc}(\ell_\eta(b))$. Then by Lemma~\ref{lem:HD_between_geod}
\begin{equation}\label{eqn:dist between ell eta and ell}
\dist^{\rm Haus}(\ell_\eta|_{[t_\star, b]}, \ell) \leq \max\big(\dist(\ell_\eta(t_\star), \, \ell(0)),\dist(\ell_\eta(b), \ell(b''))\big)\leq \sigma+\epsilon.
\end{equation}
Also, 
\begin{align*}
b''& = \dist(\pi_{\alpha \Cc}(o),y) \geq \dist(\ell_\eta(t_\star), \ell_\eta(b)) - \sigma-2\epsilon= b-t_\star -\sigma -2\epsilon \\
& \geq b-a - 2 \sigma - 4\epsilon \geq T-2\sigma - 4\epsilon. 
\end{align*}
Hence $x := \ell(\max(0,T - 2 \sigma - 4\epsilon))$ is well-defined.
Then Equation~\eqref{eqn:dist between ell eta and ell} and the assumption that $R > \sigma+\epsilon$, see Equation~\eqref{eqn: R > sigma + 2 epsilon}, imply that
\begin{equation*}
\eta \in \Oc_{\sigma+\epsilon}(o,x) \subset \Oc_R(o,x). \qedhere
\end{equation*}

\end{proof}

For $T \geq 0$, let 
$$
f(T) : = \max(0,T - 2 \sigma - 4\epsilon).
$$
Then let
\begin{align*}
A_{T} : =\left\{ \gamma \in \Gamma_0 : \dist(o,\gamma o) \in [f(T)-2r, f(T)+2r]\right\}.
\end{align*}
For any $x \in \alpha \Cc$ with 
$$
\dist(x,\pi_{\alpha \Cc}(o)) = f(T),
$$
Lemma~\ref{lem:existence of gamma_alpha} implies that there exists $\gamma \in \Gamma_0$ with 
$$
\dist(\gamma_\alpha \gamma o,x) < r
$$
and hence
$$
\dist(\gamma o, o) = \dist(\gamma_\alpha \gamma o, \gamma_\alpha o) \in  \dist(x, \pi_{\alpha \Cc}(o))+[-2r,2r]=[f(T)-2r,f(T)+2r].
$$

So
$$
\bigcup\{ \Oc_R(o,x) : x \in \alpha \Cc, \, \dist(\pi_{\alpha \Cc}(o),x) = f(T)\} \subset \bigcup_{\gamma \in A_T} \Oc_{R+r}(o,\gamma_\alpha \gamma o).
$$
So by Lemma~\ref{lem:S covered by shadows} we have 
$$
\mu(\Sc_{T,\epsilon}(\alpha  \Cc) ) \leq \sum_{\gamma \in A_{T} } \mu( \Oc_{R+r}(o,\gamma_\alpha \gamma o) ).
$$
By Proposition~\ref{prop:projection to Morse subset}, 
$$
 e^{-\delta(\Gamma)\dist(o,\gamma_\alpha \gamma o) } \gtrsim e^{-\delta(\Gamma) (\dist(o,\alpha \Cc)+T)} 
 $$
when $\gamma \in A_T$. So by the Shadow Lemma (Proposition~\ref{prop:shadow lemma}) and Proposition~\ref{obs:all annuli are about Q times exp}, 
\begin{align*}
\mu(\Sc_{T,\epsilon}(\alpha  \Cc) ) &  \lesssim \sum_{\gamma \in A_{T} } e^{-\delta(\Gamma)\dist(o,\gamma_\alpha \gamma o) } \\
& \lesssim Q(T) e^{-\delta(\Gamma) (\dist(o,\alpha \Cc)+T)} e^{\delta(\Gamma_0)T}. 
\end{align*}

\subsubsection{The lower bound} By Proposition~\ref{prop:close for a very long time means very close for a long time}, there exists $C > 0$ such that: if $\ell \colon [a,b] \rightarrow X$ is a geodesic segment, then 
\begin{equation}\label{eqn:using long tracking => close}
\ell([a,b]) \subset 
\Nc_{\sigma+2R+1}(\Cc) \implies \ell([a+C,b-C]) \subset \Nc_{\epsilon}(\Cc).
\end{equation}

\begin{lemma}\label{lem:S contains some shadows} 
$$
\Sc_{T,\epsilon}(\alpha \Cc) \supset \bigcup \left\{ \Oc_R(o,\gamma_\alpha \gamma o) : \gamma \in \Gamma_0, \, \dist(o,\gamma o) \geq T+2C+R+r+\sigma \right\}
$$
(recall that $\gamma_\alpha$ is defined in Lemma~\ref{lem:existence of gamma_alpha}). 
\end{lemma}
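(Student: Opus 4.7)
The plan is to unwind the definition of $\Sc_{T,\epsilon}(\alpha \Cc)$: given $\eta \in \Oc_R(o,\gamma_\alpha \gamma o)$, I must produce an interval $[a,b]$ with $b-a \geq T$ such that $\ell_\eta([a,b]) \subset \Nc_\epsilon(\alpha \Cc)$. The basic idea is that the geodesic $\ell_\eta$ ``shadows'' $\alpha\Cc$ for a long time, which Proposition~\ref{prop:close for a very long time means very close for a long time} (in the form of Equation~\eqref{eqn:using long tracking => close}) then upgrades from a neighborhood of radius $\sigma+2R+1$ to a neighborhood of radius $\epsilon$, at the cost of trimming $C$ from each end.

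First I would fix $\gamma \in \Gamma_0$ with $\dist(o,\gamma o) \geq T + 2C + R + r + \sigma$ and $\eta \in \Oc_R(o,\gamma_\alpha \gamma o)$. By definition of the shadow, there exists $t_0 \geq 0$ such that $y:=\ell_\eta(t_0) \in \Bc_R(\gamma_\alpha \gamma o)$. Since $\gamma_\alpha \gamma \in \alpha \Gamma_0$ fixes $\alpha\Cc$ setwise and $o \in \Cc$, the point $\gamma_\alpha \gamma o$ lies in $\alpha\Cc$, so $y \in \Nc_R(\alpha\Cc)$. By the triangle inequality $t_0 \geq \dist(o,\gamma_\alpha\gamma o) - R$, and by Lemma~\ref{lem:existence of gamma_alpha} together with $\dist(o,\gamma_\alpha o) \geq \dist(o,\alpha\Cc) - r$, this yields a lower bound for $t_0$ in terms of $\dist(o,\alpha\Cc) + \dist(o,\gamma o)$ (up to an additive constant depending only on $\sigma, r, R$).

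Next, I would apply Proposition~\ref{prop:projection to Morse subset}(1) with $\epsilon$ replaced by $R$ and $y$ as above to locate a point $u = \ell_\eta(t_u) \in \ell_\eta([0,t_0])$ within $R+\sigma$ of $\pi_{\alpha\Cc}(o)$. This forces $t_u$ to be within $R+\sigma$ of $\dist(o,\alpha\Cc)$. Since $u$ and $y$ both lie in $\overline{\Nc_{R+\sigma}(\alpha\Cc)}$, the convexity of the distance function (Lemma~\ref{lem:following_flats}) implies
\[
\ell_\eta([t_u,t_0]) \subset \overline{\Nc_{R+\sigma}(\alpha\Cc)} \subset \Nc_{\sigma + 2R + 1}(\alpha\Cc).
\]
The length $t_0 - t_u$ can then be bounded below by subtracting the upper bound on $t_u$ from the lower bound on $t_0$; after the estimates collapse, the remaining quantity is essentially $\dist(o,\gamma o)$ minus a constant, which by the hypothesis is at least $T+2C$.

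Finally, I would invoke Equation~\eqref{eqn:using long tracking => close} applied to the segment $\ell_\eta|_{[t_u,t_0]}$: trimming $C$ from each end gives an interval of length at least $T$ on which $\ell_\eta$ is contained in $\Nc_\epsilon(\alpha\Cc)$. This exhibits $\eta$ as an element of $\Sc_{T,\epsilon}(\alpha\Cc)$. The one obstacle that deserves care is the bookkeeping: the constants $r$, $\sigma$, $R$ accumulate in a delicate way, and one must verify that the cumulative loss in passing from $\dist(o,\gamma o)$ to the effective length $t_0 - t_u$ does not exceed the slack $R + r + \sigma$ granted by the hypothesis. This is purely a matter of tracking bounds and using the sharper additivity estimate from Lemma~\ref{lem:existence of gamma_alpha}, rather than anything requiring new ideas.
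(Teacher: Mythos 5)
Your outline matches the paper's: find the shadow time $t_0$, locate an ``entry point'' of $\ell_\eta$ near $\pi_{\alpha\Cc}(o)$, use convexity to trap $\ell_\eta$ in a fixed neighborhood of $\alpha\Cc$ on the whole intermediate interval, and then invoke Proposition~\ref{prop:close for a very long time means very close for a long time} to trim down to the $\epsilon$-neighborhood. The gap is exactly in the step you deferred as ``purely a matter of tracking bounds'': with your choice of entry point the bounds do not close, and you fall short by $R$. Concretely, applying Proposition~\ref{prop:projection to Morse subset}(1) to $[o,y]$ with $y=\ell_\eta(t_0)\in\Nc_R(\alpha\Cc)$ only gives $\dist(\ell_\eta(t_u),\pi_{\alpha\Cc}(o))<R+\sigma$, so the best available length estimate is
$$
t_0-t_u \;\geq\; \dist(\pi_{\alpha\Cc}(o),\gamma_\alpha\gamma o)-(R+\sigma)-R \;\geq\; \dist(o,\gamma o)-r-\sigma-2R \;\geq\; T+2C-R,
$$
so after trimming $C$ from each end you only get an interval of length $T-R$ inside $\Nc_\epsilon(\alpha\Cc)$, i.e.\ $\eta\in\Sc_{T-R,\epsilon}(\alpha\Cc)$ rather than $\Sc_{T,\epsilon}(\alpha\Cc)$. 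The total loss in your scheme is $r+2R+\sigma$, while the hypothesis only grants slack $r+R+\sigma$ beyond $T+2C$.

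The paper avoids the extra $R$ by a small but genuine additional device: it introduces the comparison geodesic $\ell\colon[0,b]\to X$ from $o$ to $\gamma_\alpha\gamma o$ (whose endpoint lies \emph{in} $\alpha\Cc$, so the projection estimate costs only $\sigma$, not $R+\sigma$), finds $t_1\leq \dist(o,\alpha\Cc)+\sigma$ with $\ell(t_1)$ within $\sigma$ of $\pi_{\alpha\Cc}(o)$, and then transfers to $\ell_\eta$ at the \emph{same parameter} $t_1$ using Lemma~\ref{lem:HD_between_geod}; the point $\ell_\eta(t_1)$ is then within $2R+\sigma$ of $\pi_{\alpha\Cc}(o)$ (hence the $\sigma+2R+1$ in Equation~\eqref{eqn:using long tracking => close}), but its parameter is controlled well enough that $t_0-t_1\geq \dist(o,\gamma o)-r-\sigma-R\geq T+2C$. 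So either adopt that comparison-geodesic step, or note that your argument proves the lemma with hypothesis $\dist(o,\gamma o)\geq T+2C+2R+r+\sigma$, which (after adjusting the definition of $B_T$ and invoking Observation~\ref{obs:all annuli are about Q times exp}) would still suffice for the lower bound in Theorem~\ref{thm:shadowing_flats} --- but as stated, the constant in the lemma is not reached by your estimate.
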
 

\begin{proof} Suppose that $\eta \in\Oc_R(o,\gamma_\alpha \gamma o) $ where $\gamma \in \Gamma_0$ and 
$$
\dist(o, \gamma o) \geq T+2C+R+r+\sigma. 
$$
Let $\ell_\eta :[ 0,\infty) \rightarrow X$ be the geodesic ray
starting at $o$ and limiting to $\eta$. Fix $t_0 \geq 0$ such that
$\dist(\ell_\eta(t_0), \gamma_\alpha \gamma o) < R$. Let $\ell \colon [0,b] \rightarrow X$ be the geodesic segment joining $o$ to $\gamma_\alpha\gamma o$. Notice that 
$$
\abs{t_0-b} <R. 
$$

Recall that $o \in \Cc$ and $\gamma_\alpha \in \alpha \Gamma_0$. So  $\gamma_\alpha\gamma o \in \alpha \Cc$. Then Proposition~\ref{prop:projection to Morse subset} implies that there exists $t_1 \in [0,b]$ such that 
$$
\dist(\ell(t_1),\pi_{\alpha \Cc}(o)) \leq \sigma. 
$$
By Lemma~\ref{lem:HD_between_geod}, 
$$
\dist^{\rm Haus}(\ell, \ell_\eta|_{[0,t_0]}) \leq \dist(\ell(b), \ell_\eta(t_0)) < R. 
$$
Hence, 
$$
\dist(\ell(s), \ell_\eta(t_1)) < R
$$
for some $s \in [0,b]$. Since 
$$
R > \dist(\ell(s), \ell_\eta(t_1)) \geq \abs{ \dist(\ell(s), o) - \dist(o,\ell_\eta(t_1))} = \abs{s-t_1},
$$
then 
$$
\dist(\ell(t_1), \ell_\eta(t_1)) \leq\dist(\ell(s), \ell_\eta(t_1))+\abs{s-t_1} < 2R. 
$$
So $\ell_\eta(t_1) \in \Bc_{\sigma+2R+1}(\pi_{\alpha \Cc}(o))$. Since $\alpha \Cc$ is convex, Lemma~\ref{lem:HD_between_geod} implies that
$$
\ell_\eta([t_1, t_0]) \subset \Nc_{\sigma+2R+1}(\alpha \Cc). 
$$
Then Equation~\eqref{eqn:using long tracking => close} implies that
$$
\ell_\eta([t_1+C, t_0-C]) \subset 
\Nc_{\epsilon}(\alpha \Cc). 
$$
Now notice that 
\begin{align*}
b-t_1 & = \dist( \gamma_\alpha \gamma o, \ell(t_1)) \geq  \dist( \gamma_\alpha \gamma o,\pi_{\alpha \Cc}(o))-\sigma  \geq  \dist( \gamma_\alpha \gamma o,\gamma_\alpha o)- \dist( \gamma_\alpha o,\pi_{\alpha \Cc}(o))-\sigma \\
& \geq \dist(\gamma o,o)-r -\sigma \geq T+2C+R.
\end{align*}
Hence
\begin{align*}
(t_0-C)-(t_1+C) \geq (b-t_1) - 2C-R \geq T  
\end{align*}
and so $\eta \in \Sc_{T,\epsilon}(\alpha \Cc)$. 
\end{proof} 

Let 
\begin{align*}
B_{T} :&  =\left\{ \gamma \in \Gamma_0 : \dist(o,\gamma o) \in [T+2C+R+r+\sigma, T+2C+R+r+\sigma+T_0]\right\}.
\end{align*}
Fix a maximal subset $B_T' \subset B_T$ such that: if $\gamma_1, \gamma_2 \in B_T'$ are distinct, then 
$$
\dist(\gamma_\alpha \gamma_1 o, \gamma_\alpha \gamma_2 o) \geq T_0 + 4r +4\sigma +4R.
$$

\begin{lemma}\label{lem:disjoint shadows in an annuli} If $\gamma_1, \gamma_2 \in B_T'$ are distinct, then
$$
 \Oc_{R}(o,\gamma_\alpha \gamma_1 o) \cap  \Oc_{R}(o,\gamma_\alpha \gamma_2 o) = \emptyset.
 $$
\end{lemma}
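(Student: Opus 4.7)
The plan is a direct triangle inequality argument by contradiction. Suppose $\eta$ lies in both shadows. Then there exist parameters $t_1, t_2 \geq 0$ with
\[
  \dist(\ell_\eta(t_i), \gamma_\alpha \gamma_i o) < R, \qquad i=1,2.
\]
Since $\ell_\eta$ is a unit speed geodesic starting at $o$, the triangle inequality gives $|t_i - \dist(o, \gamma_\alpha \gamma_i o)| < R$.

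Next I would control how far apart $t_1$ and $t_2$ can be. From Lemma~\ref{lem:existence of gamma_alpha} applied to each $\gamma_i$,
\[
  \bigl| \dist(o,\gamma_\alpha \gamma_i o) - \dist(o,\gamma_\alpha o) - \dist(o,\gamma_i o) \bigr| \leq 2\sigma + 2r,
\]
and since $\gamma_1, \gamma_2 \in B_T$, the two quantities $\dist(o,\gamma_i o)$ lie in a common interval of length $T_0$. Combining these two facts gives
\[
  \bigl|\dist(o,\gamma_\alpha \gamma_1 o) - \dist(o,\gamma_\alpha \gamma_2 o)\bigr| \leq T_0 + 4\sigma + 4r,
\]
and hence $|t_1 - t_2| < T_0 + 4\sigma + 4r + 2R$.

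Finally, I apply the triangle inequality once more:
\[
  \dist(\gamma_\alpha \gamma_1 o, \gamma_\alpha \gamma_2 o) \leq \dist(\gamma_\alpha \gamma_1 o, \ell_\eta(t_1)) + |t_1 - t_2| + \dist(\ell_\eta(t_2), \gamma_\alpha \gamma_2 o) < T_0 + 4\sigma + 4r + 4R,
\]
contradicting the definition of $B_T'$. Nothing here is delicate; the only thing that could go wrong is bookkeeping of constants, so the main task is just to confirm that the spacing $T_0 + 4r + 4\sigma + 4R$ built into the definition of $B_T'$ was chosen precisely to make this estimate close.
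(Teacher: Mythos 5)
Your proposal is correct and is essentially identical to the paper's argument: both pick times $t_1,t_2$ witnessing membership in the two shadows, bound $\abs{t_1-t_2}$ using Lemma~\ref{lem:existence of gamma_alpha} and the fact that $\dist(o,\gamma_1 o)$ and $\dist(o,\gamma_2 o)$ lie in a common interval of length $T_0$, and conclude $\dist(\gamma_\alpha\gamma_1 o,\gamma_\alpha\gamma_2 o) < T_0+4r+4\sigma+4R$, contradicting the separation built into $B_T'$. The constants match exactly, so nothing further is needed.
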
 

\begin{proof} Suppose $\gamma_1, \gamma_2 \in B_T'$ and 
$$
\eta \in  \Oc_{R}(o,\gamma_\alpha \gamma_1 o) \cap  \Oc_{R}(o,\gamma_\alpha \gamma_2 o). 
 $$
 Let $\ell_\eta \colon[ 0,\infty) \rightarrow X$ be the geodesic ray starting at $o$ and limiting to $\eta$.  Then there exists $t_1, t_2 \geq 0$ such that 
 $$
 \ell_\eta(t_1) \in \Bc_R(\gamma_\alpha \gamma_1 o) \quad \text{and} \quad \ell_\eta(t_2) \in \Bc_R(\gamma_\alpha \gamma_2 o).
 $$
By Lemma~\ref{lem:existence of gamma_alpha}
 $$
\abs{ \dist(o, \gamma_\alpha \gamma_i o) - \dist(o, \gamma_\alpha o) - \dist(\gamma_\alpha o, \gamma_\alpha \gamma_i o) } \leq 2r + 2\sigma
 $$ 
and so
 \begin{align*}
 \abs{t_1-t_2} &<  \abs{\dist(o,\gamma_\alpha\gamma_1o)-\dist(o,\gamma_\alpha\gamma_2o)}+2R \leq \abs{\dist(o,\gamma_1o)-\dist(o,\gamma_2o)}+2R + 4r+4\sigma \\
 & \leq  T_0 + 4r +4\sigma +2R.
 \end{align*}
 Thus 
 $$
 \dist(\gamma_\alpha \gamma_1 o, \gamma_\alpha \gamma_2 o) \leq  \abs{t_1-t_2} +2R < T_0 + 4r +4\sigma +4R.
 $$
 So $\gamma_1 = \gamma_2$.
\end{proof}

Then by Lemmas~\ref{lem:S contains some shadows} and ~\ref{lem:disjoint shadows in an annuli} we have 
$$
\mu(\Sc_{T,\epsilon}(\alpha  \Cc) ) \geq \sum_{\gamma \in B_{T}' } \mu( \Oc_{R}(o,\gamma_\alpha \gamma o) ).
$$
Since $\Gamma$ is discrete, we have $\# B_T' \gtrsim \#B_T$. By Proposition~\ref{prop:projection to Morse subset}, 
$$
 e^{-\delta(\Gamma)\dist(o,\gamma_\alpha \gamma o) } \gtrsim e^{-\delta(\Gamma) (\dist(o,\alpha \Cc)+T)} 
 $$
when $\gamma \in B_T'$. Then by the Shadow Lemma (Proposition~\ref{prop:shadow lemma}) and Proposition~\ref{obs:all annuli are about Q times exp}, 
\begin{align*}
\mu(\Sc_{T,\epsilon}(\alpha  \Cc) ) &  \gtrsim \sum_{\gamma \in B_{T}' } e^{-\delta(\Gamma)\dist(o,\gamma_\alpha \gamma o) } \gtrsim e^{-\delta(\Gamma) (\dist(o,\alpha \Cc)+T)} \#B_T' \\
& \gtrsim e^{-\delta(\Gamma) (\dist(o,\alpha \Cc)+T)} \#B_T \gtrsim Q(T) e^{-\delta(\Gamma) (\dist(o,\alpha \Cc)+T)} e^{\delta(\Gamma_0)T}. 
\end{align*}

This completes the proof of Theorem~\ref{thm:shadowing_flats}.

\section{Proof of Theorem~\ref{thm:khinchin}}
\label{s:main}

In this section we prove Theorem~\ref{thm:khinchin}. For the rest of the section let $X$, $\Gamma$, $\Cc$, $\Gamma_0$, $T_0$, and $Q$ be as in
Theorem~\ref{thm:main}. Fix a basepoint $o \in \Cc$ and let $\mu$ be the
unique Patterson--Sullivan measure for $\Gamma$ of dimension
$\delta(\Gamma)$, see Theorem~\ref{thm:PS ergodic case}.

For the readers convenience we recall the statement of Theorem~\ref{thm:khinchin} and the notation used in the statement.  Given a function $\phi : [0, \infty) \rightarrow [0,\infty)$, we defined
\begin{align*}
\phi\Sc_{T,\epsilon}(\alpha\Cc) & = \Sc_{T+\phi(\dist(o,\alpha\Cc)),\epsilon}(\alpha\Cc), \\
  \Theta^\phi_{T,\epsilon} & =\{\xi \in\partial X\mid \xi\in \phi\Sc_{T,\epsilon}(\alpha\Cc)
    \textrm{ for infinitely many
  }\alpha \Cc\in [\Cc] \}, \text{ and} \\
    K^\phi & = \sum_{n\in\mathbb N}
e^{-(\delta(\Gamma)-\delta(\Gamma_0))\phi (n)}
    Q(\phi(n)).
\end{align*}

\begin{theorem}[Khinchin-type theorem]
Given $\epsilon>0$ there exists $T_0 \geq 0$ such that: for any $\phi
\colon 
[0,\infty) \rightarrow [0,\infty)$  
slowly varying and any $T\geq T_0$, we have the following dichotomy:
  \begin{enumerate}
    \item If $K^\phi<\infty$, then $\mu(\Theta^\phi_{T,\epsilon})=0$.
    \item If $K^\phi=\infty$, then $\mu(\Theta^\phi_{T,\epsilon})=1$.
  \end{enumerate}
  \label{thm:khinchin_1}
\end{theorem}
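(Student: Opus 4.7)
The plan is the classical two-sided Borel--Cantelli scheme adapted to the shrinking targets $A_\alpha := \phi\Sc_{T,\epsilon}(\alpha\Cc)$ indexed by $\alpha\Cc\in[\Cc]$. The convergent direction will follow from the Subset Shadow Lemma (Theorem~\ref{thm:shadowing_flats}) combined with the counting estimate for translates (Proposition~\ref{prop:counting_translates}). The divergent direction will require, in addition, a pairwise quasi-independence estimate
$$\mu(A_\alpha\cap A_\beta)\lesssim \mu(A_\alpha)\mu(A_\beta)\quad\text{for distinct }\alpha\Cc,\beta\Cc\in[\Cc],$$
a divergent Borel--Cantelli lemma from this quasi-independence, and finally the ergodicity of $\Gamma\curvearrowright(\partial X,\mu)$ from Theorem~\ref{thm:PS ergodic case} to upgrade positive measure to full measure.

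\textbf{Convergent case.} By Theorem~\ref{thm:shadowing_flats} applied with depth $T+\phi(\dist(o,\alpha\Cc))$,
$$
\mu(\phi\Sc_{T,\epsilon}(\alpha\Cc))\asymp Q\bigl(T+\phi(d_\alpha)\bigr)\,e^{-\delta(\Gamma)(d_\alpha+T+\phi(d_\alpha))+\delta(\Gamma_0)(T+\phi(d_\alpha))},
$$
where $d_\alpha=\dist(o,\alpha\Cc)$. Group the translates by the dyadic $d_\alpha\in[n,n+N')$ shells, in which Proposition~\ref{prop:counting_translates} gives $\asymp e^{\delta(\Gamma)n}$ translates; slow variation of $\phi$ makes $\phi(d_\alpha)$ essentially equal to $\phi(n)$, and since $Q$ is a positive polynomial we have $Q(T+\phi(n))\asymp Q(\phi(n))$ uniformly in $n$. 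Thus
$$
\sum_{\alpha\Cc\in[\Cc]}\mu(\phi\Sc_{T,\epsilon}(\alpha\Cc))\lesssim e^{-(\delta(\Gamma)-\delta(\Gamma_0))T}\sum_{n\geq 1}Q(\phi(n))e^{-(\delta(\Gamma)-\delta(\Gamma_0))\phi(n)}=e^{-(\delta(\Gamma)-\delta(\Gamma_0))T}\,K^\phi.
$$
If $K^\phi<\infty$, the standard Borel--Cantelli lemma gives $\mu(\Theta^\phi_{T,\epsilon})=0$.

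\textbf{Divergent case: quasi-independence.} This is where I expect the main technical difficulty. Fix two distinct translates $\alpha\Cc\neq\beta\Cc$. If $\xi\in\phi\Sc_{T,\epsilon}(\alpha\Cc)\cap\phi\Sc_{T,\epsilon}(\beta\Cc)$, then the ray $[o,\xi)$ contains two long intervals along which it lies within $\epsilon$ of $\alpha\Cc$ and $\beta\Cc$ respectively. The almost malnormality hypothesis, via Proposition~\ref{prop:diam_flat_intersection}, forces the two intervals to overlap in a set of diameter at most $D(\epsilon)$; without loss of generality the $\alpha\Cc$-interval comes first. Using Proposition~\ref{prop:projection to Morse subset} to split the geodesic $[o,\xi)$ at the last time it is close to $\alpha\Cc$, the tail of the geodesic past $\alpha\Cc$ must then enter $\Nc_\epsilon(\beta\Cc)$ for time $\geq T+\phi(d_\beta)$, and shadow-style estimates using the Busemann cocycle and conformality of $\mu$ under $\Gamma$-translates (reducing to the Shadow Lemma, Proposition~\ref{prop:shadow lemma}) bound the conditional measure by $\mu(\phi\Sc_{T,\epsilon}(\beta\Cc))$ up to a multiplicative constant, provided $T$ is large enough to absorb the Morse and malnormality constants. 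The final estimate is
$$
\mu(A_\alpha\cap A_\beta)\lesssim\mu(A_\alpha)\,\mu(A_\beta).
$$
Working out the geometric details of this ``first-hit, then independently hit the second'' argument is the heart of the proof.

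\textbf{Divergent Borel--Cantelli and ergodicity upgrade.} Enumerate $[\Cc]=\{\alpha_n\Cc\}_{n\geq 1}$ and let $A_n=\phi\Sc_{T,\epsilon}(\alpha_n\Cc)$. The convergent-case computation in reverse shows $\sum\mu(A_n)=\infty$ when $K^\phi=\infty$, and quasi-independence combined with the standard second-moment (Kochen--Stone / Chung--Erd\H{o}s) lemma yields
$$
\mu\bigl(\Theta^\phi_{T,\epsilon}\bigr)=\mu\bigl(\limsup A_n\bigr)\geq\limsup_{N\to\infty}\frac{\bigl(\sum_{n\leq N}\mu(A_n)\bigr)^2}{\sum_{n,m\leq N}\mu(A_n\cap A_m)}>0.
$$
To upgrade to full measure, observe that for any $\gamma\in\Gamma$ and $\alpha\Cc\in[\Cc]$,
$$
|\dist(o,\gamma^{-1}\alpha\Cc)-\dist(o,\alpha\Cc)|\leq\dist(o,\gamma o),
$$
so slow variation of $\phi$ together with Lemma~\ref{lem:HD_between_geod} give constants $T'=T'(\gamma)$, $\epsilon'=\epsilon'(\gamma)$ with $\gamma\cdot\Theta^\phi_{T,\epsilon}\subset\Theta^\phi_{T',\epsilon'}$ (and symmetrically). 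Hence $\Theta^\phi_{T,\epsilon}$ agrees up to a $\mu$-null set with the $\Gamma$-invariant set $\bigcap_{T''}\bigcup_{\epsilon''}\Theta^\phi_{T'',\epsilon''}$ (or a similar invariant hull), to which we apply Theorem~\ref{thm:PS ergodic case}. Ergodicity then forces the measure to be $1$, completing the proof.
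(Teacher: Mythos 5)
Your overall architecture (Borel--Cantelli plus the Subset Shadow Lemma plus counting, then quasi-independence plus ergodicity) matches the paper, and your convergent-case computation is correct and essentially identical to Lemma~\ref{lem:quasi_ind_lemma1} summed over shells. However, there are two genuine gaps in the divergent direction.

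First, the pairwise quasi-independence estimate $\mu(A_\alpha\cap A_\beta)\lesssim\mu(A_\alpha)\mu(A_\beta)$ for \emph{individual} distinct translates is false. The obstruction is nesting: if $\beta\Cc$ sits far behind $\alpha\Cc$ as seen from $o$, so that $\phi\Sc_{T,\epsilon}(\alpha\Cc)\cap\phi\Sc_{T,\epsilon}(\beta\Cc)\neq\emptyset$ with $\dist(o,\alpha\Cc)\leq\dist(o,\beta\Cc)$, then the Morse property forces essentially all of $\Sc_{T,\epsilon}(\beta\Cc)$ into a slightly enlarged shadow of $\alpha\Cc$ (this is Lemma~\ref{lem:nesting_shadows} in the paper), so $\mu(A_\alpha\cap A_\beta)$ is comparable to $\mu(A_\beta)$, which exceeds $C\mu(A_\alpha)\mu(A_\beta)$ as soon as $\mu(A_\alpha)<1/C$. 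No uniform constant exists. The paper's fix is structural: group the translates into annular shells $\Ac_n=\{\alpha\Cc:\dist(o,\alpha\Cc)\in[n,n+1)\}$, use almost malnormality (via Proposition~\ref{prop:diam_flat_intersection}) to show the shadows \emph{within a single shell} are pairwise disjoint for $T$ large, and prove quasi-independence only for the shell unions $U_n$ versus $U_m$, $m\neq n$. There the nesting phenomenon becomes the tool rather than the obstruction: for fixed $\alpha\Cc\in\Ac_n$, the disjoint shadows of the $\beta\Cc\in\Ac_m$ meeting $A_\alpha$ all nest inside $\phi\Sc_{T-C_1,\epsilon+C_1}(\alpha\Cc)$, and summing gives $\mu(U_m\cap U_n)\lesssim\mu(U_m)\mu(U_n)$. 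Your second-moment argument should be run on the $U_n$, not on the individual $A_\alpha$.

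Second, the ergodicity upgrade is not complete as stated. For $\gamma\in\Gamma$ one gets $\gamma\cdot\Theta^\phi_{T,\epsilon}\subset\Theta^\phi_{T-c(\gamma),\epsilon}$ after repairing the radius via Proposition~\ref{prop:close for a very long time means very close for a long time}, but $c(\gamma)$ is unbounded over $\Gamma$, so no single adjusted pair $(T',\epsilon')$ works for all $\gamma$, and it is not shown that your proposed invariant hull agrees with $\Theta^\phi_{T,\epsilon}$ up to null sets (proving that would essentially require the theorem). The paper resolves this by constructing an auxiliary slowly varying $\psi$ with $\psi\to\infty$, $K^{\phi+\psi}=\infty$, and $\Gamma\cdot\Theta^{\phi+\psi}_{T,\epsilon}\subset\Theta^{\phi}_{T,\epsilon}$: the divergence of $\psi$ along $\dist(o,\alpha_j\Cc)\to\infty$ absorbs the unbounded constant $c(\gamma)$, positivity of $\mu(\Theta^{\phi+\psi}_{T,\epsilon})$ follows from Borel--Cantelli applied to $\phi+\psi$, and ergodicity applied to the genuinely invariant set $\Gamma\cdot\Theta^{\phi+\psi}_{T,\epsilon}$ finishes the proof.
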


To prove Theorem~\ref{thm:khinchin_1} we use the  Borel--Cantelli Lemma and its converse (see
\cite[Section II]{Lamperti} for a proof).

\begin{lemma}[Borel--Cantelli]
  Let $(Y,\nu)$ be a probability space and $(Y_n)_{n\in\mathbb N}$ a sequence of
  measureable subsets of $Y$. Then:
  \begin{enumerate}
    \item If $\sum_{n=0}^\infty \nu(Y_n)<\infty$, then $\nu(\limsup Y_n)=0$.
    \item If $\sum_{n=0}^\infty \nu(Y_n)=\infty$ and there exists a constant
      $c$ such that $\nu(Y_n\cap Y_m)\leq c\nu(Y_n)\nu(Y_m)$ for all $n\neq m$, 
      then $\nu(\limsup Y_n)>0$.
  \end{enumerate}
  \label{lem:borel_cantelli}
\end{lemma}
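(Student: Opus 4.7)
The plan is to apply the Borel--Cantelli dichotomy (Lemma~\ref{lem:borel_cantelli}) to the family $\{A_{\alpha\Cc} := \phi\Sc_{T,\epsilon}(\alpha\Cc) : \alpha\Cc \in [\Cc]\}$, using the Subset Shadow Lemma (Theorem~\ref{thm:shadowing_flats}) for measure estimates and ergodicity (Theorem~\ref{thm:PS ergodic case}) for the zero--one upgrade.

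\emph{Sum comparison.} By Theorem~\ref{thm:shadowing_flats} applied with depth $T+\phi(d)$, where $d := \dist(o,\alpha\Cc)$,
\[
  \mu(A_{\alpha\Cc}) \asymp Q\bigl(T+\phi(d)\bigr)\, e^{-\delta(\Gamma) d}\, e^{-(\delta(\Gamma)-\delta(\Gamma_0))(T+\phi(d))}.
\]
Grouping translates by integer annuli $\{n < d \leq n+N'\}$, Proposition~\ref{prop:counting_translates} produces $\asymp e^{\delta(\Gamma) n}$ translates per annulus, cancelling $e^{-\delta(\Gamma) d}$; since $\phi$ is slowly varying and $Q$ is polynomial, $Q(T+\phi(n)) \asymp Q(\phi(n))$ up to constants depending on $T$. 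Hence $\sum_{\alpha\Cc \in [\Cc]} \mu(A_{\alpha\Cc})$ is comparable to $K^\phi$ with $T$-dependent constants, and the convergent case $K^\phi < \infty$ follows immediately from part (1) of Lemma~\ref{lem:borel_cantelli}.

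\emph{Divergent case.} For $K^\phi = \infty$, I will establish a uniform $c > 0$ with $\mu(A_{\alpha\Cc} \cap A_{\beta\Cc}) \leq c\, \mu(A_{\alpha\Cc})\mu(A_{\beta\Cc})$ for all distinct pairs. The geometric input is Proposition~\ref{prop:diam_flat_intersection}: by almost malnormality of $\Gamma_0$, the set $\Nc_\epsilon(\alpha\Cc) \cap \Nc_\epsilon(\beta\Cc)$ has diameter at most $D(\epsilon)$. Choosing $T_0$ so that $T + \phi(d) > D(\epsilon)$ for all $d \geq 0$, a ray from $o$ witnessing both shadow conditions must visit the two $\epsilon$-neighborhoods during essentially disjoint time intervals; WLOG $d_\alpha := \dist(o,\alpha\Cc) \leq d_\beta := \dist(o,\beta\Cc)$, so the visit near $\alpha\Cc$ precedes the one near $\beta\Cc$. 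Using Proposition~\ref{prop:projection to Morse subset} to locate the exit point near $\pi_{\alpha\Cc}(o)$ and the Busemann-function conformality of $\mu$ to shift basepoints, I will bound the conditional measure of the subsequent $\Nc_\epsilon(\beta\Cc)$-visit by a shifted application of Theorem~\ref{thm:shadowing_flats}; multiplying by $\mu(A_{\alpha\Cc})$ yields the desired product bound. The second-moment argument underlying part (2) of Lemma~\ref{lem:borel_cantelli} then gives the uniform lower bound $\mu(\Theta^\phi_{T,\epsilon}) \geq 1/c$, independent of $T \geq T_0$.

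\emph{Zero--one upgrade.} Since $[\Cc]$ is $\Gamma$-invariant, changing basepoint from $o$ to $\gamma o$ perturbs each shadow by Hausdorff distance at most $\dist(o,\gamma o)$ (Lemma~\ref{lem:HD_between_geod}), and slow variation of $\phi$ absorbs this into a bounded shift of the depth parameter; this yields $\gamma \Theta^\phi_{T,\epsilon} \subset \Theta^\phi_{T-C_\gamma,\epsilon}$ for some $C_\gamma$ depending on $\gamma$. The tail intersection $\Theta_\infty := \bigcap_{T\geq T_0} \Theta^\phi_{T,\epsilon}$ is therefore genuinely $\Gamma$-invariant, and by the uniform lower bound above $\mu(\Theta_\infty) \geq 1/c$, so Theorem~\ref{thm:PS ergodic case} forces $\mu(\Theta_\infty) = 1$; hence $\mu(\Theta^\phi_{T,\epsilon}) = 1$ for every $T \geq T_0$. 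The main obstacle will be the quasi-independence estimate: making the conditional-measure bound truly uniform across the infinite family of pairs, and cleanly converting shadow estimates at shifted basepoints back to estimates from $o$ via the conformal transformation rule of $\mu$.
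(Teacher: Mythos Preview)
Your proposal does not address the stated lemma at all. The statement you were asked to prove is the classical Borel--Cantelli lemma (Lemma~\ref{lem:borel_cantelli}), a general measure-theoretic fact about a probability space $(Y,\nu)$ and an arbitrary sequence of measurable sets $(Y_n)$. The paper does not prove this lemma; it simply cites Lamperti for the standard argument (part (1) by countable subadditivity and continuity of measure, part (2) via the Chung--Erd\H{o}s/Paley--Zygmund second-moment inequality). What you have written is instead a sketch of the proof of Theorem~\ref{thm:khinchin_1}, which \emph{applies} Lemma~\ref{lem:borel_cantelli} rather than establishing it.

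If your intent was actually to outline the Khinchin-type theorem, note that your approach diverges from the paper in two substantive ways. First, the paper does not apply Borel--Cantelli to the individual shadows $\phi\Sc_{T,\epsilon}(\alpha\Cc)$ indexed over all translates; it groups them into annular packets $U^\phi_{n,T,\epsilon} = \bigcup_{\alpha\Cc\in\Ac_n}\phi\Sc_{T,\epsilon}(\alpha\Cc)$ and verifies quasi-independence at that level (Proposition~\ref{prop:quasi_independence}), using a nesting lemma (Lemma~\ref{lem:nesting_shadows}) rather than a basepoint-shift via conformality. Second, the zero--one upgrade in the paper is not obtained by intersecting over $T$; instead it constructs an auxiliary slowly varying $\psi$ with $K^{\phi+\psi}=\infty$ and $\Gamma\cdot\Theta^{\phi+\psi}_{T,\epsilon}\subset\Theta^{\phi}_{T,\epsilon}$ (Lemma~\ref{lem:construction of a different SVF}), then invokes ergodicity on the $\Gamma$-saturation of $\Theta^{\phi+\psi}_{T,\epsilon}$. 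Your tail-intersection idea $\Theta_\infty = \bigcap_{T\geq T_0}\Theta^\phi_{T,\epsilon}$ is problematic as stated: the inclusion you derive goes the wrong direction to conclude $\Gamma$-invariance of $\Theta_\infty$, and a uniform lower bound on each $\mu(\Theta^\phi_{T,\epsilon})$ does not automatically pass to the intersection.
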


Define
\[
  \Ac_n:=
  \{{\alpha\Cc}\in[\Cc] \mid \dist(o,{\alpha\Cc})\in
  [n,n+1)\},
\]
and 
\[
U^\phi_{n,T,\epsilon}:=\bigcup_{{\alpha\Cc}\in \Ac_n} \phi
  {\Sc}_{T,\epsilon}(\alpha\Cc).
\]
Then 
\[
\Theta^\phi_{T,\epsilon}:=\limsup_{n\to\infty} U^\phi_{n,T,\epsilon}
  =\bigcap_{n\in\mathbb N}\bigcup_{m\geq n} U^\phi_{m,T,\epsilon}.
\]

We will show that when $\phi$ is slowly varying and $T$ is sufficiently large, the sets $(U^\phi_{n,T,\epsilon})_{n \geq 1}$ satisfy the hypothesis of the Borel--Cantelli Lemma. 

\subsection{Measure estimates} 
The goal of this section is to prove the following. 

\begin{proposition}
Assume $\epsilon>0$,  $\phi \colon [0,\infty) \rightarrow [0,\infty)$ 
is slowly varying, and $T > 0$  is sufficiently large (depending only on $\epsilon$), then 
$$
\sum_{n=1}^\infty \mu(U^\phi_{n,T,\epsilon}) = \infty \quad \text{if and only if} \quad K^\phi = \infty.
$$
  \label{prop:sum condition in BC}
\end{proposition}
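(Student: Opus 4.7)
The plan is to estimate $\mu(U^\phi_{n,T,\epsilon})$ in both directions using the Subset Shadow Lemma (Theorem~\ref{thm:shadowing_flats}) as the driver, combined with the counting of translates (Proposition~\ref{prop:counting_translates}) to account for the size of $\Ac_n$, and with the almost-malnormality of $\Gamma_0$ (Proposition~\ref{prop:diam_flat_intersection}) to control how much distinct subset-shadows can overlap. Throughout, I will use that $\phi$ is slowly varying, so $\phi(\dist(o,\alpha\Cc)) = \phi(n) + O(1)$ for $\alpha\Cc \in \Ac_n$, and that $Q$ is a positive polynomial, so $Q(T+\phi(n)+O(1)) \asymp_T Q(\phi(n))$. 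With these reductions, Theorem~\ref{thm:shadowing_flats} applied to $T' = T + \phi(\dist(o,\alpha\Cc))$ and $\alpha\Cc \in \Ac_n$ gives
\[
\mu(\phi\Sc_{T,\epsilon}(\alpha\Cc)) \asymp_T Q(\phi(n))\, e^{-\delta(\Gamma) n}\, e^{-(\delta(\Gamma)-\delta(\Gamma_0))\phi(n)}.
\]

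For the easy direction (if $K^\phi < \infty$ then $\sum \mu(U^\phi_n) < \infty$), I use the union bound $\mu(U^\phi_n) \leq \sum_{\alpha\Cc \in \Ac_n} \mu(\phi\Sc_{T,\epsilon}(\alpha\Cc))$. A width-one slice $\Ac_n$ sits inside a width-$N'$ annulus, so Proposition~\ref{prop:counting_translates} yields $\#\Ac_n \lesssim e^{\delta(\Gamma) n}$. Combining with the display above gives $\mu(U^\phi_n) \lesssim_T Q(\phi(n)) e^{-(\delta(\Gamma)-\delta(\Gamma_0))\phi(n)}$, which is the $n$-th term of $K^\phi$, and summability transfers.

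The harder direction (if $K^\phi = \infty$ then $\sum \mu(U^\phi_n) = \infty$) requires a matching lower bound $\mu(U^\phi_n) \gtrsim \#\Ac_n \cdot Q(\phi(n)) e^{-\delta(\Gamma) n - (\delta(\Gamma)-\delta(\Gamma_0))\phi(n)}$. Granted this, Proposition~\ref{prop:counting_translates} gives $\sum_{m=n}^{n+N'-1} \#\Ac_m \gtrsim e^{\delta(\Gamma) n}$, and since $\phi$ is slowly varying the tail factor is essentially constant across each window of length $N'$, so after regrouping $\sum_n \mu(U^\phi_n) \gtrsim K^\phi$. To obtain the lower bound on $\mu(U^\phi_n)$, I plan to argue that for $T$ sufficiently large (depending on $\epsilon$), the shadows $\{\phi\Sc_{T,\epsilon}(\alpha\Cc)\}_{\alpha\Cc \in \Ac_n}$ have bounded overlap multiplicity, so
\[
\mu(U^\phi_n) \gtrsim \sum_{\alpha\Cc \in \Ac_n} \mu(\phi\Sc_{T,\epsilon}(\alpha\Cc)).
\]
The main obstacle is establishing this bounded-overlap property, which is where almost-malnormality enters: by Proposition~\ref{prop:diam_flat_intersection}, for distinct $\alpha_1\Cc \neq \alpha_2\Cc$ we have $\diam(\Nc_\epsilon(\alpha_1\Cc) \cap \Nc_\epsilon(\alpha_2\Cc)) \leq D(\epsilon)$. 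Thus if $T + \phi(n) > D(\epsilon)$ and $\eta$ lies in both $\phi\Sc_{T,\epsilon}(\alpha_1\Cc)$ and $\phi\Sc_{T,\epsilon}(\alpha_2\Cc)$, the geodesic $\ell_\eta$ must enter $\Nc_\epsilon(\alpha_1\Cc)$ and $\Nc_\epsilon(\alpha_2\Cc)$ in essentially disjoint time intervals. Being in both shadows forces $\eta$ into a shadow of the "second" visit which starts a distance $\geq T + \phi(n)$ beyond $\alpha_1\Cc$, and Theorem~\ref{thm:shadowing_flats} shows this has exponentially smaller measure in $T+\phi(n)$ than $\mu(\phi\Sc_{T,\epsilon}(\alpha_1\Cc))$ alone. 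Summing the Bonferroni-type bound $\mu(U^\phi_n) \geq \sum \mu(\phi\Sc(\alpha\Cc)) - \sum_{\alpha_1\neq\alpha_2}\mu(\phi\Sc(\alpha_1\Cc)\cap\phi\Sc(\alpha_2\Cc))$, the pairwise overlap contribution is controlled by an additional factor like $e^{-(\delta(\Gamma)-\delta(\Gamma_0))(T+\phi(n))}\sum_{\alpha_2 \neq \alpha_1}(\text{Shadow Lemma weight of second hit})$, which can be made less than $\tfrac12\sum \mu(\phi\Sc(\alpha\Cc))$ by choosing $T$ large. Absorbing that exponent into the choice of $T_0$ is the technical heart of the proof.
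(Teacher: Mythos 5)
Your upper bound is fine and matches the paper: union bound, the Subset Shadow Lemma applied at depth $T+\phi(\dist(o,\alpha\Cc))$, slow variation of $\phi$, and Proposition~\ref{prop:counting_translates} to bound $\#\Ac_n$. The problem is the lower bound. You correctly identify that you need to control the overlaps of the shadows $\{\phi\Sc_{T,\epsilon}(\alpha\Cc)\}_{\alpha\Cc\in\Ac_n}$, and you correctly bring in Proposition~\ref{prop:diam_flat_intersection}, but the step you defer as ``the technical heart'' --- bounding $\sum_{\alpha_1\ne\alpha_2}\mu\bigl(\phi\Sc(\alpha_1\Cc)\cap\phi\Sc(\alpha_2\Cc)\bigr)$ by $\tfrac12\sum\mu(\phi\Sc(\alpha\Cc))$ --- is not established, and the heuristic you give for it is shaky: the number of pairs grows like $e^{2\delta(\Gamma)n}$ while $T$ is fixed independently of $n$ and $\phi$ may be bounded, so a per-pair gain of ``$e^{-(\delta(\Gamma)-\delta(\Gamma_0))(T+\phi(n))}$ times a second-hit weight'' does not obviously dominate the Bonferroni correction term without a precise accounting of what the second-hit weights sum to.

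The missing idea is that no overlap estimate is needed at all: for $T$ larger than a constant depending only on $\epsilon$, the shadows $\Sc_{T,\epsilon}(\alpha_1\Cc)$ and $\Sc_{T,\epsilon}(\alpha_2\Cc)$ with $\alpha_1\Cc\ne\alpha_2\Cc$ both in $\Ac_n$ are \emph{disjoint}. The point your sketch misses is the Morse entry-point control, Proposition~\ref{prop:projection to Morse subset}(2): the first point where $\ell_\eta$ meets $\overline{\Nc_{\epsilon+\sigma}(\alpha_i\Cc)}$ lies within $3\epsilon+3\sigma$ of $\pi_{\alpha_i\Cc}(o)$, so the entry times satisfy $a_i=\dist(o,\alpha_i\Cc)+O(\epsilon+\sigma)$. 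Since both translates lie in the same width-one annulus, $\abs{a_1-a_2}\le 6\epsilon+6\sigma+1$; the two visit intervals, each of length at least $T$, therefore share a subinterval of length at least $T-6\epsilon-6\sigma-1$, which forces $\diam\bigl(\Nc_{\epsilon+\sigma}(\alpha_1\Cc)\cap\Nc_{\epsilon+\sigma}(\alpha_2\Cc)\bigr)>D(\epsilon+\sigma)$ once $T$ is large, contradicting Proposition~\ref{prop:diam_flat_intersection}. In other words, your picture of the geodesic making a ``second visit'' starting a distance $T+\phi(n)$ beyond $\alpha_1\Cc$ is geometrically impossible when $\alpha_2\Cc$ is at distance roughly $n$ from $o$; had you pushed your own setup one step further you would have found the contradiction rather than a small-measure event. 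With disjointness in hand, $\mu(U^\phi_{n,T,\epsilon})$ equals the sum of the individual shadow measures, and the rest of your argument (counting translates over windows of width $N'$ and regrouping the series) goes through.
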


The proof of the proposition requires a number of lemmas. From the Subset Shadow Lemma, we have the following estimate. 

\begin{lemma} Assume $\epsilon>0$, $T \geq 0$, and $\phi \colon [0,\infty)
  \rightarrow [0,\infty)$ is slowly varying.
  If ${\alpha\Cc}\in \Ac_n$, then
  \[
    \mu(\phi {\Sc}_{T,\epsilon}(\alpha\Cc))\asymp
    e^{-\delta(\Gamma) n}
    e^{(\delta(\Gamma_0)-\delta(\Gamma))
    \phi(n)} Q(\phi(n))
  \]
  with implicit constants independent of $n$ and ${\alpha}\Cc\in\Ac_n$. 
  \label{lem:horoball_shadows}
\end{lemma}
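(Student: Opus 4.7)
The plan is to apply the Subset Shadow Lemma (Theorem~\ref{thm:shadowing_flats}) directly, with the depth parameter $T' := T + \phi(\dist(o,\alpha\Cc))$ in place of $T$, since by definition $\phi\Sc_{T,\epsilon}(\alpha\Cc) = \Sc_{T',\epsilon}(\alpha\Cc)$. This immediately gives
\[
  \mu(\phi\Sc_{T,\epsilon}(\alpha\Cc)) \asymp Q(T') \, e^{-\delta(\Gamma)(\dist(o,\alpha\Cc) + T') + \delta(\Gamma_0)T'},
\]
with constants depending only on $\epsilon$ (in particular, independent of $n$ and $\alpha\Cc$).

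Next I would use the two bounded-perturbation facts that turn this estimate into the one stated. First, $\alpha\Cc \in \Ac_n$ means $\dist(o,\alpha\Cc) \in [n,n+1)$, so $\dist(o,\alpha\Cc) = n + O(1)$. Second, since $\phi$ is slowly varying, iterating the defining property of Section~\ref{s:defns} a bounded number of times (to cover the unit increment if the constant $B<1$) yields a constant $A'>0$, independent of $n$, with $|\phi(\dist(o,\alpha\Cc)) - \phi(n)| \leq A'$. Consequently $T' = T + \phi(n) + O(1)$, where $T$ itself is a fixed constant.

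Finally, I would substitute these into the asymptotic. The exponential factors become
\[
  e^{-\delta(\Gamma)(\dist(o,\alpha\Cc) + T') + \delta(\Gamma_0)T'} \asymp e^{-\delta(\Gamma)(n + \phi(n))} \, e^{\delta(\Gamma_0)\phi(n)},
\]
since the error contributes only a bounded multiplicative factor. For the polynomial factor, $Q$ is a positive polynomial on $[0,\infty)$, and for any such polynomial a bounded shift in the argument (here of size $T + O(1)$) changes the value by at most a bounded multiplicative factor, uniformly in $\phi(n) \geq 0$. Hence $Q(T') \asymp Q(\phi(n))$, and combining yields
\[
  \mu(\phi\Sc_{T,\epsilon}(\alpha\Cc)) \asymp e^{-\delta(\Gamma) n} \, e^{(\delta(\Gamma_0)-\delta(\Gamma))\phi(n)} \, Q(\phi(n)),
\]
as required. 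There is no real obstacle here — the argument is essentially bookkeeping, and the only mild care is in justifying the polynomial comparison $Q(T') \asymp Q(\phi(n))$ uniformly in $n$, which follows from $Q$ being positive and continuous together with the fact that the ratio of $Q$ evaluated at arguments differing by a bounded amount is bounded above and below on $[0,\infty)$.
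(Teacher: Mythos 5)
Your proposal is correct and follows essentially the same route as the paper: apply the Subset Shadow Lemma with depth $T+\phi(\dist(o,\alpha\Cc))$, then absorb the bounded discrepancies coming from $\dist(o,\alpha\Cc)\in[n,n+1)$, the slowly varying property of $\phi$, and the fact that a positive polynomial changes by a bounded multiplicative factor under a bounded shift of its argument. The paper's proof is just a terser version of the same bookkeeping.
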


\begin{proof}
  By the   Subset Shadow Lemma  (Theorem~\ref{thm:shadowing_flats}),
  \[
    \mu({\Sc}_{t,\epsilon}(\alpha\Cc))\asymp e^{-\delta(\Gamma) (\dist(o,{\alpha\Cc})+t)}e^{\delta(\Gamma_0)
    t}Q(t)
  \]
  with implicit constants depending only on $\epsilon$. Hence
    \[
    \mu(\phi {\Sc}_{T,\epsilon}(\alpha\Cc))\asymp e^{-\delta(\Gamma) \dist(o,{\alpha\Cc})}
    e^{(\delta(\Gamma_0)-\delta(\Gamma))\phi(\dist(o,{\alpha\Cc}))} Q(\phi(\dist(o,{\alpha\Cc}))+T)
  \]
  with implicit constants depending only on $\epsilon,T$. Since
  ${\alpha\Cc}\in\Ac_n$, 
  $\phi$ is slowly varying, and $Q$ is a positive
  polynomial,  the conclusion follows.
\end{proof}

\begin{proposition}
  For all   $\epsilon>0$  there exists $T_1>0$ such that: If 
  $T\geq T_1$,  and $n\in\mathbb N$, then
  the sets 
  $$
  \{{\Sc}_{T,\epsilon}(\alpha\Cc)\mid {\alpha\Cc}\in
  \Ac_n\}
  $$
  are pairwise disjoint. 
  \label{prop:disjointness}
\end{proposition}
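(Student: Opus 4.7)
My plan is to argue by contradiction using Proposition~\ref{prop:diam_flat_intersection}. Suppose $\xi \in \Sc_{T,\epsilon}(\alpha_1\Cc) \cap \Sc_{T,\epsilon}(\alpha_2\Cc)$ for distinct $\alpha_1\Cc, \alpha_2\Cc \in \Ac_n$; then the ray $\ell_\xi$ contains intervals $[a_i, b_i]$ of length $\geq T$ with $\ell_\xi([a_i, b_i]) \subset \Nc_\epsilon(\alpha_i\Cc)$ for $i=1,2$. The heuristic is that since both $\alpha_i\Cc$ lie at distance in $[n, n+1)$ from $o$, these intervals should overlap in a sub-arc of length $\approx T$, while almost malnormality of $\Gamma_0$ bounds the diameter of $\Nc_\epsilon(\alpha_1\Cc) \cap \Nc_\epsilon(\alpha_2\Cc)$ uniformly, forcing a contradiction once $T$ is large enough.

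To execute this, I will fix $\sigma$ as in Theorem~\ref{thm:equiv of Morse} and proceed in four steps. \emph{Step 1 (localize entry from above):} apply Proposition~\ref{prop:projection to Morse subset}(1) with $x = o$ and $y = \ell_\xi(a_i)$ to obtain $t_i \in [0, a_i]$ with $\dist(\ell_\xi(t_i), \pi_{\alpha_i\Cc}(o)) < \epsilon + \sigma$. Comparing $\dist(o, \ell_\xi(t_i)) = t_i$ with $\dist(o, \pi_{\alpha_i\Cc}(o)) \in [n, n+1)$ via the triangle inequality yields $t_i < n + 1 + \epsilon + \sigma$. \emph{Step 2 (enlarge the neighborhood):} note $\ell_\xi(t_i) \in \Nc_{\epsilon+\sigma}(\alpha_i\Cc)$; combining this with $\dist(\ell_\xi(\,\cdot\,), \alpha_i\Cc) \leq \epsilon$ on $[a_i, b_i]$ and the convexity from Lemma~\ref{lem:following_flats}, I deduce $\ell_\xi([t_i, b_i]) \subset \Nc_{\epsilon+\sigma}(\alpha_i\Cc)$. \emph{Step 3 (identify the overlap):} since $a_i \geq \dist(o,\alpha_i\Cc) - \epsilon \geq n - \epsilon$, I obtain $b_i \geq n - \epsilon + T$, so the interval $I := [\max(t_1, t_2), \min(b_1, b_2)]$ has length $|I| \geq T - 1 - 2\epsilon - \sigma$. \emph{Step 4 (apply malnormality):} distinctness $\alpha_1\Cc \neq \alpha_2\Cc$ forces $\alpha_1^{-1}\alpha_2 \notin \Gamma_0$, so Proposition~\ref{prop:diam_flat_intersection} (applied with $\epsilon+\sigma$ in place of $\epsilon$, after translating the intersection by $\alpha_1^{-1}$ using equivariance of projections) bounds the diameter of $\Nc_{\epsilon+\sigma}(\alpha_1\Cc)\cap\Nc_{\epsilon+\sigma}(\alpha_2\Cc)$ by some $D(\epsilon+\sigma)$. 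Since $\ell_\xi$ is a unit-speed geodesic, this diameter is at least $|I|$, so $T \leq D(\epsilon+\sigma) + 1 + 2\epsilon + \sigma$. Any $T_1$ strictly larger than this right-hand side then yields disjointness.

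The main obstacle is that the first entry time of $\ell_\xi$ into $\Nc_\epsilon(\alpha_i\Cc)$ is not a priori controlled by $\dist(o, \alpha_i\Cc)$ alone, because the geodesic may graze $\alpha_i\Cc$ from far away before actually entering the $\epsilon$-neighborhood. Steps 1 and 2 circumvent this by passing to the slightly larger tubular neighborhood $\Nc_{\epsilon+\sigma}$, whose entry time is directly pinned down by the Morse projection $\pi_{\alpha_i\Cc}(o)$, and then using convexity to propagate control over the whole interval $[t_i, b_i]$. This is the delicate point that makes the estimates clean enough for the almost-malnormal diameter bound to close the argument.
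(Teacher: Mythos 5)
Your argument is correct and follows essentially the same route as the paper's proof: both localize the entry of $\ell_\xi$ into the $(\epsilon+\sigma)$-neighborhoods near $\pi_{\alpha_i\Cc}(o)$ using Proposition~\ref{prop:projection to Morse subset}, extract an overlap sub-arc of length roughly $T$ minus a constant, and contradict the diameter bound of Proposition~\ref{prop:diam_flat_intersection} coming from almost malnormality. The only cosmetic difference is that the paper invokes part (2) of Proposition~\ref{prop:projection to Morse subset} to compare the two entry times directly, while you use part (1) plus convexity to the same effect.
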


\begin{proof} 
  Let $\sigma \geq 0$ satisfy Theorem~\ref{thm:equiv of Morse} for $\Cc$ and let $D=D(\epsilon+\sigma) > 0$ satisfy Proposition~\ref{prop:diam_flat_intersection}. Then set
  $$
  T_1 : = D + 6\epsilon + 6\sigma+2. 
  $$

Fix $T \geq T_1$ and   $\alpha_1\Cc,\alpha_2\Cc\in\Ac_n$ with 
  $$
  \eta\in {\Sc}_{T,\epsilon}(\alpha_1\Cc)\cap {\Sc}_{T,\epsilon}(\alpha_2\Cc).
  $$
  Let $\ell_\eta \colon [ 0,\infty) \rightarrow X$ be the geodesic ray starting at $o$ and limiting to $\eta$. For $i=1,2$, let 
$$
(a_i,b_i) : = \left\{ t \geq 0 : \ell_\eta(t) \in 
\Nc_{\epsilon+\sigma}(\alpha_i \Cc)\right\} 
$$
(which is indeed an interval by Lemma~\ref{lem:following_flats}). Then 
$$
b_i  -a_i \geq T.
$$
Since $\alpha_1\Cc,\alpha_2\Cc\in\Ac_n$, we have 
$$
|\dist(o,\alpha_1\Cc)-\dist(o,\alpha_2\Cc)|\leq 1
  $$
  So Proposition~\ref{prop:projection to Morse subset}(2) implies that
  \begin{align*}
    \abs{a_1-a_2}&=\abs{d(o,\ell_\eta(a_1))-d(o,\ell_\eta(a_2))}\\
    &\leq
    \abs{d(\ell_\eta(a_1),\pi_{\alpha_1\Cc}(o))-d(\ell_\eta(a_2),\pi_{\alpha_2\Cc}(o))}+\abs{d(o,\alpha_1\Cc)-d(o,\alpha_2\Cc)}
    \\ & \leq 
    6\sigma+6\epsilon+
    1.
  \end{align*}
  After possibly relabelling, we can suppose that  $a_1\leq a_2$. Then 
  $$
  [a_2, a_1 + T] \subset [a_1,b_1] \cap [a_2,b_2].
  $$
So
\begin{align*}
\diam  &  (\Nc_{\epsilon+\sigma}(\alpha_1\Cc)\cap \Nc_{\epsilon+\sigma}(\alpha_2\Cc))  \geq \diam \ell_\eta([a_2,a_1+T])  \\
    &  = a_1+T - a_2 \geq T - 6\sigma - 6\epsilon - 
    1
    > D(\epsilon+\sigma).
\end{align*} 
Thus by the definition of $D(\epsilon+\sigma)$, we have $\alpha_1 \Cc = \alpha_2 \Cc$. 
\end{proof}

\begin{lemma} Assume $\epsilon>0$,  $\phi \colon [0,\infty) \rightarrow [0,\infty)$ 
  is slowly varying, 
  and $T_1 = T_1(\epsilon)> 0$ satisfies Proposition~\ref{prop:disjointness}.
 If $\alpha_0\Cc\in \Ac_n$ and $T \geq T_1$, then
  \begin{align*}
    \mu(U_{n,T,\epsilon}^\phi)&\asymp
\sum_{\alpha\Cc\in\Ac_n}\mu(\phi\Sc_{T,\epsilon}(\alpha\Cc))
 \asymp 
    \mu(\phi {\Sc}_{T,\epsilon}(\alpha_0\Cc))\#\Ac_n \\
&    \asymp \frac{\mu(\phi {\Sc}_{T,\epsilon}(\alpha_0\Cc))}{\mu({\Sc}_{T,\epsilon}(\alpha_0\Cc))} \asymp 
    e^{(\delta(\Gamma_0)-\delta(\Gamma))\phi(n)}Q(\phi(n)).
      \end{align*}
  with implicit constants independent of $n$ and ${\alpha}_0\Cc\in\Ac_n$. 
  \label{lem:quasi_ind_lemma1}
\end{lemma}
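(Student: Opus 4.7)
The plan is to chain the four comparisons, each using a different ingredient already established. Write $\phi\Sc(\alpha) := \phi\Sc_{T,\epsilon}(\alpha\Cc)$ for brevity. Since $\phi \geq 0$ and $T \geq T_1$, each $\phi\Sc(\alpha)$ is $\Sc_{T_\alpha,\epsilon}(\alpha\Cc)$ with $T_\alpha := T + \phi(\dist(o,\alpha\Cc)) \geq T_1$. Because shadows of larger depth are contained in shadows of smaller depth, and the family $\{\Sc_{T,\epsilon}(\alpha\Cc) : \alpha\Cc\in\Ac_n\}$ is pairwise disjoint by Proposition~\ref{prop:disjointness}, the family $\{\phi\Sc(\alpha) : \alpha\Cc\in\Ac_n\}$ is also pairwise disjoint. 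Thus $\mu(U_{n,T,\epsilon}^\phi)$ equals the displayed sum, establishing the first $\asymp$ as equality. The second $\asymp$ then follows immediately from Lemma~\ref{lem:horoball_shadows}, which says that each summand $\mu(\phi\Sc(\alpha))$ is comparable, uniformly in $n$ and in $\alpha\Cc \in \Ac_n$, to the single quantity $\mu(\phi\Sc(\alpha_0))$.

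The fourth $\asymp$ is a direct computation. Using Lemma~\ref{lem:horoball_shadows} for the numerator and the Subset Shadow Lemma (Theorem~\ref{thm:shadowing_flats}) at the fixed depth $T$ for the denominator (so $\mu(\Sc_{T,\epsilon}(\alpha_0\Cc)) \asymp e^{-\delta(\Gamma)n}$ with constants depending only on $T$, $\epsilon$, and $Q(T)$), the common factor $e^{-\delta(\Gamma)n}$ cancels, leaving $e^{(\delta(\Gamma_0)-\delta(\Gamma))\phi(n)} Q(\phi(n))$.

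The third $\asymp$ is the crux of the lemma and reduces to showing
\[
\#\Ac_n \asymp \frac{1}{\mu(\Sc_{T,\epsilon}(\alpha_0\Cc))} \asymp e^{\delta(\Gamma)n},
\]
with constants allowed to depend on $T$ and $\epsilon$. The upper bound $\#\Ac_n \lesssim e^{\delta(\Gamma)n}$ is immediate from assumption~\eqref{item:Gamma_counting_assumption} of Theorem~\ref{thm:main} together with the trivial inequality $\dist(o,\alpha o) \geq \dist(o,\alpha\Cc)$. The matching lower bound is the substance of Proposition~\ref{prop:counting_translates}, and here lies the one technical obstacle I anticipate: that proposition produces a constant $N' \in \Nb$ for which the width-$N'$ count is $\asymp e^{\delta(\Gamma)n}$, whereas $\Ac_n$ is defined with width one. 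Matching these widths is the main thing to take care of; I expect it to be handled either by sharpening the Tauberian step in Lemma~\ref{lem:HP let us count} to width one, or more likely by reading $\Ac_n$ (and hence the indexing of $U_{n,T,\epsilon}^\phi$) as a width-$N'$ annulus throughout, an adjustment that only reparametrizes the Borel--Cantelli sum and leaves the divergence/convergence dichotomy of $K^\phi$ unaffected.
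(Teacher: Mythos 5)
Your proof is correct and follows essentially the same route as the paper's (which is a terse three-sentence argument chaining Proposition~\ref{prop:disjointness}, Lemma~\ref{lem:horoball_shadows}, and Proposition~\ref{prop:counting_translates}); your handling of the first, second, and fourth coarse equalities matches the paper exactly. The width mismatch you flag in the third equality is a genuine imprecision in the paper: $\Ac_n$ is defined with width one while Proposition~\ref{prop:counting_translates} only delivers the lower bound $\#\Ac_n\gtrsim e^{\delta(\Gamma)n}$ for annuli of width $N'$, and the paper's proof does not address this. Your proposed repair --- reading $\Ac_n$ as a width-$N'$ annulus throughout, which only rescales the constant in Proposition~\ref{prop:disjointness} and reindexes the Borel--Cantelli sum without affecting the convergence dichotomy for $K^\phi$ (since $\phi$ is slowly varying) --- is the right one.
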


\begin{remark} Though it is not significant for the proof of Theorem~\ref{thm:khinchin_1}, an interesting observation is
that when $\phi \equiv 1$ we obtain $\mu(U_{n,T,\epsilon}^\phi)\asymp1$. \end{remark} 

\begin{proof}
Since $\phi\geq 0$ we have $\phi {\Sc}_{T,\epsilon}(\alpha\Cc)\subset
{\Sc}_{T,\epsilon}(\alpha\Cc)$.  Then by   Proposition~\ref{prop:disjointness} and Lemma~\ref{lem:horoball_shadows} the sets $\{ \phi\Sc_{T,\epsilon}(\alpha \Cc)  : \alpha \in \Ac_n\}$ are disjoint and have coarsely equal measure. Hence the first two coarse equalities hold.  The final two coarse equalities follow
  Lemma~\ref{lem:horoball_shadows} and
Proposition~\ref{prop:counting_translates}.
\end{proof}

\begin{proof}[Proof of Proposition~\ref{prop:sum condition in BC}] This follows immediately from the last coarse equality in Lemma~\ref{lem:quasi_ind_lemma1}.
\end{proof}

\subsection{Quasi-independence} 

The goal of this section is to prove the following.

\begin{proposition}[Quasi-independence] 
Assume $\epsilon>0$,  $\phi \colon [0,\infty) \rightarrow [0,\infty)$ is slowly varying, and $T > 0$  is sufficiently large (depending only on $\epsilon$), then there exists $c=c(\phi,\epsilon,T)> 0$ such that
  $$
  \mu(U^{\phi}_{m,T,\epsilon}\cap U^{\phi}_{n,T,\epsilon})\leq c\mu(U^{\phi}_{m,T,\epsilon})\mu(U^{\phi}_{n,T,\epsilon})
  $$
  for all $m > n \geq 1$. 
  \label{prop:quasi_independence}
\end{proposition}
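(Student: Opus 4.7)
Fix $m > n \geq 1$ and choose $T$ large enough that Proposition~\ref{prop:disjointness} applies, so that $\{\phi\Sc_{T,\epsilon}(\gamma\Cc) : \gamma\Cc \in \Ac_k\}$ is a disjoint family for each fixed $k$. Since $\Ac_n$ and $\Ac_m$ are disjoint subsets of $[\Cc]$, we may write
\[
\mu(U^\phi_{m,T,\epsilon} \cap U^\phi_{n,T,\epsilon}) = \sum_{\alpha\Cc \in \Ac_n}\sum_{\beta\Cc \in \Ac_m} \mu\bigl(\phi\Sc_{T,\epsilon}(\alpha\Cc) \cap \phi\Sc_{T,\epsilon}(\beta\Cc)\bigr),
\]
and the goal becomes to bound the inner sum by $\mu(\phi\Sc_{T,\epsilon}(\alpha\Cc)) \cdot \mu(U^\phi_{m,T,\epsilon})$ uniformly in $\alpha\Cc$; summing over $\alpha\Cc \in \Ac_n$ then gives the claim.

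For each pair contributing to the sum, a geodesic $\ell_\xi$ with $\xi \in \phi\Sc_{T,\epsilon}(\alpha\Cc) \cap \phi\Sc_{T,\epsilon}(\beta\Cc)$ traverses the $\epsilon$-neighborhood of $\alpha\Cc$ for time at least $T+\phi(n)$ and, because $m>n$, afterwards traverses the $\epsilon$-neighborhood of $\beta\Cc$ for time at least $T+\phi(m)$; by Proposition~\ref{prop:diam_flat_intersection} (which uses the almost malnormal hypothesis) the two neighborhoods overlap in a set of uniformly bounded diameter. I would then repeat the upper-bound half of the proof of Theorem~\ref{thm:shadowing_flats} to cover $\phi\Sc_{T,\epsilon}(\alpha\Cc)$ by shadows $\Oc_R(o,\gamma_\alpha\gamma o)$, where $\gamma \in \Gamma_0$ and $\dist(o,\gamma o) \approx T+\phi(n)$, so that each point $p:=\gamma_\alpha\gamma o$ lies near a typical exit point of $\ell_\xi$ from the neighborhood of $\alpha\Cc$.

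On each such shadow $\Oc_R(o,p)$, the extra requirement $\xi\in \phi\Sc_{T,\epsilon}(\beta\Cc)$ says that the continuation of $\ell_\xi$ past $p$ enters the $\epsilon$-neighborhood of $\beta\Cc$ for a long time. Using quasi-conformality of $\mu$ (Radon--Nikodym derivative $e^{-\delta(\Gamma) b_\xi(p)}$), the $\mu$-measure of this restricted set can be transferred to a subset-shadow measure of $\beta\Cc$ from basepoint $p$, multiplied by $e^{-\delta(\Gamma)\dist(o,p)}$. A basepoint-shifted version of Theorem~\ref{thm:shadowing_flats} then gives
\[
\asymp Q(\phi(m))\,e^{-\delta(\Gamma)(m-n) + (\delta(\Gamma_0)-\delta(\Gamma))(T+\phi(m))}
\]
for this restricted measure. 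Summing over $\gamma$ (which contributes a factor $Q(\phi(n))e^{\delta(\Gamma_0)(T+\phi(n))}$ by Observation~\ref{obs:all annuli are about Q times exp}) and over $\beta\Cc \in \Ac_m$ whose projection lies at distance $\approx m-n$ from $p$ (counted by Proposition~\ref{prop:counting_translates}) produces the bound $\mu(\phi\Sc_{T,\epsilon}(\alpha\Cc))\cdot\mu(U^\phi_{m,T,\epsilon})$ after one applies Lemma~\ref{lem:quasi_ind_lemma1}.

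\textbf{Main obstacle.} The technical heart is transferring the Subset Shadow Lemma across a change of basepoint from $o$ to a point $p$ beyond $\alpha\Cc$. This requires: the $\Gamma_0$-counting hypothesis to survive the change of basepoint (which follows by left invariance); the Morse geometry of $\alpha\Cc$ not to distort the upper-bound covering by more than a bounded factor (controlled via Proposition~\ref{prop:projection to Morse subset}); and the slow-variation of $\phi$ to permit replacing $\phi(\dist(o,\beta\Cc))$ by $\phi(m)$ up to bounded additive error. Almost malnormality, via Proposition~\ref{prop:diam_flat_intersection}, is what prevents over-counting arising from distinct translates whose neighborhoods share an unbounded arc of $\ell_\xi$, and it is the crucial input that makes the two shadow traversals genuinely ``independent'' events.
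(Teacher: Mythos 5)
Your setup (the double sum over $\Ac_n\times\Ac_m$, disjointness within each level via Proposition~\ref{prop:disjointness}, and the role of Proposition~\ref{prop:diam_flat_intersection}) matches the paper, but from there you take a basepoint-shifting route whose two central steps are asserted rather than proved, and one of them does not obviously close up. First, the ``basepoint-shifted version of Theorem~\ref{thm:shadowing_flats}'' together with the quasi-conformal transfer of $\mu(\Oc_R(o,p)\cap \phi\Sc_{T,\epsilon}(\beta\Cc))$ to a shadow measure seen from $p$ is the entire technical content of your argument; proving it would require uniform control of the Busemann cocycle $b_\xi(p)$ on these sets (via the Morse property) plus a re-run of Section~5 from the new basepoint, and you supply neither. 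Second, your summation over $\beta\Cc\in\Ac_m$ assumes these translates all sit at distance $\approx m-n$ from $p$ and are handled by one application of Proposition~\ref{prop:counting_translates}; in fact the triangle inequality only gives $\dist(p,\beta\Cc)\geq m-\dist(o,p)$, the relevant translates are spread over roughly $n+T+\phi(n)$ distance shells, and a shell-by-shell estimate of the form $e^{\delta(\Gamma)s}\cdot e^{-\delta(\Gamma)s}$ produces an unbounded extra factor unless disjointness of the level-$m$ shadows is invoked in a more refined way than you indicate. (Also, the claim that $\ell_\xi$ meets $\Nc_\epsilon(\alpha\Cc)$ \emph{before} $\Nc_\epsilon(\beta\Cc)$ does not follow from $m>n$ alone; it is the content of the Claim inside Lemma~\ref{lem:nesting_shadows} and requires Proposition~\ref{prop:projection to Morse subset}(2) together with the bounded-overlap estimate.)

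The paper avoids all of this with a purely set-theoretic nesting statement (Lemma~\ref{lem:nesting_shadows}): if $\phi\Sc_{T,\epsilon}(\alpha\Cc)\cap\phi\Sc_{T,\epsilon}(\beta\Cc)\neq\emptyset$ with $\beta\Cc$ the farther translate, then the entire unshifted shadow $\Sc_{T,\epsilon}(\beta\Cc)$ is contained in $\phi\Sc_{T-C_1,\epsilon+C_1}(\alpha\Cc)$. Combined with disjointness at level $m$ this gives $\sum_{\beta}\mu(\Sc_{T,\epsilon}(\beta\Cc))\lesssim\mu(\phi\Sc_{T,\epsilon}(\alpha\Cc))$ (Lemma~\ref{lem:quasi_ind_lemma2}), and the coarse identity $\mu(\phi\Sc_{T,\epsilon}(\beta\Cc))\asymp\mu(\Sc_{T,\epsilon}(\beta\Cc))\,\mu(U^\phi_{m,T,\epsilon})$ from Lemma~\ref{lem:quasi_ind_lemma1} then yields quasi-independence with no change of basepoint and no second shadow lemma from a moving viewpoint. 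If you want to salvage your approach, the nesting lemma is essentially the rigorous replacement for your transfer step.
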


Fix $\epsilon> 0$ and $\phi$ slowly varying. We need two lemmas. 

\begin{lemma}
There exist $C_1,T_2>0$ (both depending only on $\epsilon$) such that:  if $T\geq T_2$,
  $\alpha_1,\alpha_2\in\Gamma$, $\alpha_1 \Cc \neq \alpha_2 \Cc$, 
  $\dist(o,\alpha_1\Cc)\leq \dist(o,\alpha_2\Cc)$, and $$
  \phi {\Sc}_{T,\epsilon}(\alpha_1\Cc)\cap \phi {\Sc}_{T,\epsilon}(\alpha_2\Cc)\neq \emptyset,
  $$
  then 
  $$
  {\Sc}_{T,\epsilon}(\alpha_2\Cc)\subset \phi {\Sc}_{T-C_1,\epsilon+C_1}(\alpha_1\Cc). 
  $$
  \label{lem:nesting_shadows}
\end{lemma}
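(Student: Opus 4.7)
The plan is to use the witness $\eta$ to force any $\eta' \in \Sc_{T,\epsilon}(\alpha_2\Cc)$ to track $\ell_\eta$ closely on the long segment where $\ell_\eta$ stays near $\alpha_1\Cc$. The two main ingredients are the bounded-intersection estimate (Proposition~\ref{prop:diam_flat_intersection}) and the CAT(0) convexity of Lemma~\ref{lem:HD_between_geod}, combined with the fact from Proposition~\ref{prop:projection to Morse subset}(1) that any ray from $o$ grazing $\Nc_\epsilon(\alpha_i\Cc)$ must pass within $\epsilon+\sigma$ of the common projection point $\pi_{\alpha_i\Cc}(o)$.

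First I would fix $\sigma$ from Theorem~\ref{thm:equiv of Morse} and $D=D(\epsilon)$ from Proposition~\ref{prop:diam_flat_intersection}, and let $[a_i,b_i]$ be the maximal interval on which $\ell_\eta$ lies in $\Nc_\epsilon(\alpha_i\Cc)$, so $b_i-a_i \geq T+\phi(n_i)$ with $n_i:=\dist(o,\alpha_i\Cc)$. Proposition~\ref{prop:projection to Morse subset}(1) produces $u_i\in[a_i,b_i]$ with $\dist(\ell_\eta(u_i),\pi_{\alpha_i\Cc}(o))\leq \epsilon+\sigma$, giving $\abs{u_i-n_i}\leq \epsilon+\sigma$ and $a_i\in[n_i-\epsilon, n_i+\epsilon+\sigma]$. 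The hypothesis $n_1\leq n_2$ then forces $a_1 \leq a_2 + 2\epsilon+\sigma$, and Proposition~\ref{prop:diam_flat_intersection} bounds the length of $[a_1,b_1]\cap[a_2,b_2]$ by $D$. Taking $T_2$ larger than $D$ plus the above error terms ensures that the interval $[a_1,b_1]$ must end before $[a_2,b_2]$ ends, i.e. $b_1 \leq a_2 + D + 2\epsilon + \sigma$, so $[a_1,b_1]$ ends at parameter at most $n_2 + O(\epsilon,\sigma,D)$.

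Now for any $\eta'\in\Sc_{T,\epsilon}(\alpha_2\Cc)$, the same Morse-projection argument produces $u_2'$ with $\abs{u_2'-n_2}\leq \epsilon+\sigma$ and $\dist(\ell_{\eta'}(u_2'),\pi_{\alpha_2\Cc}(o))\leq\epsilon+\sigma$. Hence $\dist(\ell_\eta(u_2),\ell_{\eta'}(u_2'))\leq 2(\epsilon+\sigma)$ and $\abs{u_2-u_2'}\leq 2(\epsilon+\sigma)$. Lemma~\ref{lem:HD_between_geod} applied to the segments $[o,\ell_\eta(u_2)]$ and $[o,\ell_{\eta'}(u_2')]$ yields Hausdorff distance at most $2(\epsilon+\sigma)$, which combined with unit-speed parametrizations and a triangle inequality yields $\dist(\ell_\eta(t),\ell_{\eta'}(t))\leq 4(\epsilon+\sigma)$ for all $t\in[0,\min(u_2,u_2')]$. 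One more triangle inequality extends this (using $b_1 \leq u_2 + D + O(\epsilon,\sigma)$) to a universal bound $\dist(\ell_\eta(t),\ell_{\eta'}(t))\leq C_1=C_1(\epsilon)$ for all $t\in[a_1,b_1]$. Therefore $\ell_{\eta'}([a_1,b_1])\subset \Nc_{\epsilon+C_1}(\alpha_1\Cc)$, and since $b_1-a_1\geq T+\phi(n_1)$ we conclude $\eta'\in \Sc_{T+\phi(n_1),\epsilon+C_1}(\alpha_1\Cc)=\phi\Sc_{T,\epsilon+C_1}(\alpha_1\Cc)\subset \phi\Sc_{T-C_1,\epsilon+C_1}(\alpha_1\Cc)$. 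The main obstacle is careful bookkeeping to ensure that $C_1$ and $T_2$ depend only on $\epsilon$ (not on $T$, $\phi$, or the specific translates), and in particular that the ``first-hit'' analysis in the second paragraph correctly identifies $[a_1,b_1]$ as the earlier interval using $n_1\leq n_2$.
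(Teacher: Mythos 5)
Your overall strategy is the same as the paper's: use Proposition~\ref{prop:diam_flat_intersection} to force the $\alpha_1$-interval of $\ell_\eta$ to end shortly after the $\alpha_2$-interval begins, then transfer the long $\alpha_1$-segment of $\ell_\eta$ to $\ell_{\eta'}$ via Lemma~\ref{lem:HD_between_geod} and the common near-passage of both rays through $\pi_{\alpha_2\Cc}(o)$. The fellow-traveling step, the extension past $\min(u_2,u_2')$ by a triangle inequality, and the final bookkeeping with $\Sc_{T+\phi(n_1),\epsilon+C_1}\subset\phi\Sc_{T-C_1,\epsilon+C_1}$ are all fine.

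There is, however, a genuine gap in your entry-time estimate, and it is load-bearing. You define $[a_i,b_i]$ as the interval on which $\ell_\eta$ lies in $\Nc_\epsilon(\alpha_i\Cc)$ and claim $a_i\leq n_i+\epsilon+\sigma$ (with $n_i=\dist(o,\alpha_i\Cc)$), citing Proposition~\ref{prop:projection to Morse subset}(1). That proposition produces a time $u_i$ with $\dist(\ell_\eta(u_i),\pi_{\alpha_i\Cc}(o))\leq\epsilon+\sigma$, hence $\abs{u_i-n_i}\leq\epsilon+\sigma$; but $\ell_\eta(u_i)$ is only guaranteed to lie in $\Nc_{\epsilon+\sigma}(\alpha_i\Cc)$, not in $\Nc_\epsilon(\alpha_i\Cc)$, so $u_i$ need not belong to $[a_i,b_i]$ and the inequality $a_i\leq u_i$ does not follow. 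Convexity of $t\mapsto\dist(\ell_\eta(t),\alpha_i\Cc)$ gives no a priori upper bound on how long the ray can linger at distances between $\epsilon$ and $\epsilon+\sigma$ before first reaching distance $\epsilon$, so the bound $a_i\leq n_i+\epsilon+\sigma$ is unproved as stated. You use it twice: once to order the intervals ($a_1\leq a_2+2\epsilon+\sigma$ requires the upper bound on $a_1$), and once, implicitly, to place $b_1\leq a_2+D+O(\epsilon,\sigma)$ below $\min(u_2,u_2')+O(\epsilon,\sigma,D)$, which requires the upper bound on $a_2$. The paper's fix is to define the intervals with respect to $\Nc_{\epsilon+\sigma}(\alpha_i\Cc)$ instead (the duration bound $b_i-a_i\geq T+\phi(n_i)$ survives since $\Nc_\epsilon\subset\Nc_{\epsilon+\sigma}$) and to invoke Proposition~\ref{prop:projection to Morse subset}(2), which says precisely that the first intersection of $[o,y]$ with $\overline{\Nc_{\epsilon+\sigma}(\alpha_i\Cc)}$ lies within $3\epsilon+3\sigma$ of $\pi_{\alpha_i\Cc}(o)$; this yields $\abs{a_i-n_i}\leq 3\epsilon+3\sigma$, after which the rest of your argument goes through with adjusted constants.
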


\begin{proof}
  Let $\sigma > 0$ satisfy Theorem~\ref{thm:equiv of Morse}  for $\Cc$ and let $D=D(\epsilon+\sigma) > 0$ satisfy Proposition~\ref{prop:diam_flat_intersection}. Then set
  $$
  T_2 : = D + 4\epsilon + 4\sigma. 
  $$

  Fix $T \geq T_2$, $\eta\in\phi\Sc_{T,\epsilon}(\alpha_1\Cc)\cap\phi\Sc_{T,\epsilon}(\alpha_2\Cc)$, and
  $\xi\in\Sc_{T,\epsilon}(\alpha_2\Cc)$.   Let $\ell_\eta,\ell_\xi \colon [ 0,\infty) \rightarrow X$ be the geodesic rays starting at $o$ and limiting to $\eta, \xi$ respectively. Then let
  $$
  (a_i,b_i) : = \left\{ t \geq 0 : \ell_\eta(t) \in 
  \Nc_{\epsilon+\sigma}(\alpha_i \Cc)\right\} 
  \quad \text{for} \quad i=1,2
  $$
  and 
  $$
  (a_2',b_2') : = \left\{ t \geq 0 : \ell_\xi(t) \in 
  \Nc_{\epsilon+\sigma}(\alpha_2 \Cc)\right\}  $$
  (which are indeed intervals by Lemma~\ref{lem:following_flats}). Then 
  $$
  b_i - a_i \geq T+\phi(\dist(o,\alpha_i\Cc))  \quad \text{for} \quad i=1,2.
  $$
  See Figure~\ref{fig:nesting_shadows}.

  \begin{figure}[h!]
    \centering
%
%
%

\begin{tikzpicture}[scale=1]

  \draw (0,.5)--(3,.5)--
  (4,2.5)
  --(1,2.5)--
  node[pos=.2, left] {$\alpha_1\Cc$}
  cycle;

  \draw (4,-2) --
  (7,-2)-- 
  node[pos=.15, right] {$\alpha_2\Cc$} 
  (8,0)--
  (5,0)--cycle;

  \draw (-1,.75)-- 
  coordinate[pos=.2] (a1) 
  coordinate[pos=.25] (t1) 
  coordinate[pos=1] (s1) 
  (3.25,.75) to[out=0,in=180]
  coordinate[pos=.1] (b1) 
  coordinate[pos=.9] (a2) 
  (4.4,-.75) to [out=0,in=-120]
  coordinate[pos=.75] (b2) 
  (9,.5) node[above right] {$\eta$};

  \draw (-1,.3)--
  coordinate[pos=.2] (a1') 
  (2.8,.3) to[out=0,in=180]
  coordinate[pos=.2] (b1')
  coordinate[pos=.9] (a2')
  (4,-1.3) to [out=0, in=90] 
  coordinate[pos=.75] (b2')
  (6,-3) node[below] {$\xi$};

  \draw (a1)+(.25,.2) coordinate (na1);
  \draw (b1)+(-.1,.2) coordinate (nb1);
  \draw (a2)+(.5,.4) coordinate (na2);
  \draw (b2)+(-.4,.1) coordinate (nb2);
  \draw (a2')+(.1,-.5) coordinate (na2');
  \draw (b2')+(-.5,-.1) coordinate (nb2');

  \draw [decorate,decoration={brace,amplitude=5pt}]
  (na1)--(nb1)
  node [midway,xshift=5pt,yshift=12pt] {\footnotesize $\geq T+\phi(d(o,\alpha_1
  \Cc))$}; \draw [decorate,decoration={brace,amplitude=5pt}]
  (na2)--(nb2)
  node [midway,xshift=5pt,yshift=15pt] {\footnotesize $\geq T+\phi(d(o,
  \alpha_2\Cc))$};
  \draw [decorate,decoration={brace,amplitude=5pt}]
  (nb2')--(na2')
  node [midway,xshift=-12pt,yshift=-12pt] {\footnotesize $\geq T$};

  \def\s{.05}
  \draw[fill] (a1') circle (\s) node[below left] {$a_1'$};
  \draw[fill] (b1') circle (\s) node[below left] {$b_1'$};
  \draw[fill] (a1) circle (\s) node[above left] {$a_1$};
  \draw[fill] (b1) circle (\s) node[above right] {$b_1$};
  \draw[fill] (b2) circle (\s) node[below right] {$b_2$};
  \draw[fill] (a2) circle (\s) node[above right] {$a_2$};
  \draw[fill] (a2') circle (\s) node[below left] {$a_2'$};
  \draw[fill] (b2') circle (\s) node[below left] {$b_2'$};
\end{tikzpicture}

    \caption{
      The arrangement of points in the proof of
      Lemma~\ref{lem:nesting_shadows}, with geodesics labeled by the time
      parameters $a_i,b_i,a_i',b_i'$.
    }
    \label{fig:nesting_shadows}
  \end{figure}
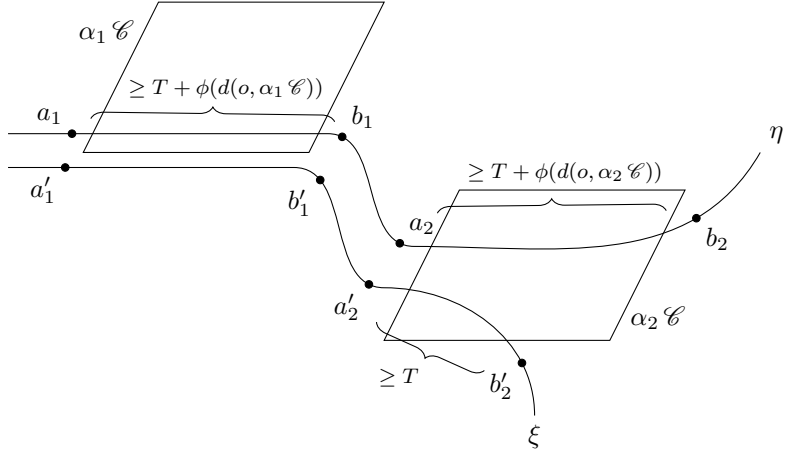

\medskip

  \noindent \textbf{Claim:} $b_1-D-4\epsilon-4\sigma \leq a_2$. Hence $a_1 \leq  a_2$. 

  \medskip 
  
  \noindent \emph{Proof of Claim:} Suppose the first assertion fails. Notice that   
  \[
    \ell_{\eta}\Big(\left(\max( a_1, a_2),\min( b_1, b_2)\right)\Big)\subset
    \Nc_{\epsilon+\sigma}(\alpha_1\Cc)\cap
    \Nc_{\epsilon+\sigma}(\alpha_2\Cc).
  \]
  Since $\dist(o,\alpha_1\Cc)\leq \dist(o,\alpha_2\Cc)$,  Proposition~\ref{prop:projection to Morse subset}(2) implies that
  \begin{align*}
    a_1-a_2&=\dist(\ell_\eta(a_1),o)-\dist(\ell_\eta(a_2),o) \\&\leq
    \big( \dist(o,\alpha_1\Cc)+3\epsilon+3\sigma \big)-\big(\dist(o,\alpha_2\Cc)-\epsilon-\sigma\big)\leq 4\epsilon+4\sigma.
  \end{align*}
  So 
  $$
  \max( a_1, a_2) \leq a_2 + 4\epsilon+4\sigma.
  $$
  Further by assumption, 
  $$
  \min( b_1, b_2) > \min( a_2 + D + 4\epsilon + 4\sigma, a_2 + T) \geq a_2 + D + 4\epsilon + 4\sigma.
  $$
  So 
  $$
  D \geq \diam 
  \Nc_{\epsilon+\sigma}(\alpha_1\Cc)\cap
  \Nc_{\epsilon+\sigma}(\alpha_2\Cc)
  \geq \min( b_1, b_2)-\max( a_1, a_2) > D
  $$
  and we have a contradiction. Hence the first assertion is true. For the second, notice that 
  $$
  a_1 \leq b_1 - T  \leq b_1 - T_2 =b_1 - D-4\epsilon-4\sigma \leq a_2. 
  $$
  So the claim is true.  \hfill $\blacktriangleleft$

\medskip

  Next, Proposition~\ref{prop:projection to Morse subset}(2) implies that 
  $$
  \ell_\eta(a_2), \ell_\xi(a_2') \in 
  \Bc_{3\epsilon+3\sigma}(\pi_{\alpha_2 \Cc}(o))
  $$
  and so Lemma~\ref{lem:HD_between_geod} implies that 
  $$
  \dist^{\rm Haus}( \ell_\eta([0,a_2]), \ell_\xi( [0,a_2']) \leq 6\epsilon + 6 \sigma. 
    $$
    So there exists $a_1', b_1' \in [0,a_2']$ with 
    $$
    \dist( \ell_\eta(a_1), \ell_\xi(a_1')),  \ \dist( \ell_\eta(b_1-D-4\epsilon-4\sigma), \ell_\xi(b_1')) \leq 6\epsilon + 6 \sigma. 
    $$
    Then 
    $$
    b_1' - a_1' \geq b_1 -a_1-D-16\epsilon - 16\sigma \geq T + \phi(\dist(o, \alpha_1 \Cc)) - D-16 \epsilon - 16 \sigma
    $$
    and 
by     Lemma~\ref{lem:following_flats},
    $$
    \ell_\xi( [a_1', b_1']) \subset \Nc_{6 \epsilon + 6\sigma}(\ell_\eta( [ a_1,b_1])) \subset \Nc_{7\epsilon+7\sigma}(\alpha_1 \Cc). 
    $$
    So 
    \begin{equation*}
      \xi \in \phi
      \Sc_{T- D-16 \epsilon - 16 \sigma, 7\epsilon+7\sigma}(\alpha_1 \Cc).  
      \qedhere
    \end{equation*}
  \end{proof}

For $m>n$ and $\alpha\Cc\in\Ac_n$, let 
\[
  I_{T,\epsilon,m}(\alpha\Cc):=\{{\beta\Cc}\in\Ac_m \mid \phi {\Sc}_{T,\epsilon}(\alpha\Cc)\cap \phi {\Sc}_{T,\epsilon}(\beta\Cc)\neq
  \emptyset \}. 
\]

\begin{lemma} Assume $\epsilon > 0$ and fix $T_1, T_2$ as in Proposition~\ref{prop:disjointness} and  Lemma~\ref{lem:nesting_shadows}. There exists $C_2 > 0$ such that: If $T \geq \max(T_1,T_2)$, $m>n$, and $\alpha\Cc\in\Ac_n$, then
  \[
    \sum_{\beta \Cc\in I_{T,\epsilon,m}(\alpha \Cc)} \mu({\Sc}_{T,\epsilon}(\beta\Cc)) \leq C_2 \mu(\phi{\Sc}_{T,\epsilon}(\alpha\Cc)).
  \]
  \label{lem:quasi_ind_lemma2}
\end{lemma}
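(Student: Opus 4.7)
The plan is to combine the nesting of shadows from Lemma~\ref{lem:nesting_shadows} with the disjointness from Proposition~\ref{prop:disjointness} and then compare two shadow measures at the same subset via the Subset Shadow Lemma. The geometric work has already been done; what remains is assembly.

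\textbf{Step 1 (Nesting).} Since $m > n$ and $\alpha\Cc \in \Ac_n$, $\beta\Cc \in \Ac_m$, we have $\dist(o,\alpha\Cc) \leq \dist(o,\beta\Cc)$. Assuming $T \geq T_2$, Lemma~\ref{lem:nesting_shadows} applies to every pair $(\alpha\Cc,\beta\Cc)$ with $\beta\Cc \in I_{T,\epsilon,m}(\alpha\Cc)$ and yields
\[
  \Sc_{T,\epsilon}(\beta\Cc) \subset \phi \Sc_{T-C_1,\,\epsilon+C_1}(\alpha\Cc),
\]
with $C_1$ the constant from Lemma~\ref{lem:nesting_shadows}.

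\textbf{Step 2 (Disjointness).} By Proposition~\ref{prop:disjointness}, for $T \geq T_1$, the shadows $\{\Sc_{T,\epsilon}(\beta\Cc) : \beta\Cc \in \Ac_m\}$ are pairwise disjoint; in particular so are their subcollection indexed by $I_{T,\epsilon,m}(\alpha\Cc)$. Combined with Step 1,
\[
  \sum_{\beta\Cc \in I_{T,\epsilon,m}(\alpha\Cc)} \mu(\Sc_{T,\epsilon}(\beta\Cc))
  = \mu\Bigl( \bigsqcup_{\beta\Cc \in I_{T,\epsilon,m}(\alpha\Cc)} \Sc_{T,\epsilon}(\beta\Cc) \Bigr)
  \leq \mu\bigl(\phi \Sc_{T-C_1,\,\epsilon+C_1}(\alpha\Cc)\bigr).
\]

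\textbf{Step 3 (Shadow Lemma comparison).} It remains to bound $\mu(\phi\Sc_{T-C_1,\epsilon+C_1}(\alpha\Cc))$ by a constant times $\mu(\phi\Sc_{T,\epsilon}(\alpha\Cc))$. Setting $s := T+\phi(\dist(o,\alpha\Cc))$, the Subset Shadow Lemma (Theorem~\ref{thm:shadowing_flats}) applied twice --- once with radius $\epsilon$ and depth $s$, once with radius $\epsilon+C_1$ and depth $s-C_1$ --- gives
\[
  \frac{\mu(\phi\Sc_{T-C_1,\epsilon+C_1}(\alpha\Cc))}{\mu(\phi\Sc_{T,\epsilon}(\alpha\Cc))}
  \lesssim \frac{Q(s-C_1)}{Q(s)} \cdot e^{(\delta(\Gamma)-\delta(\Gamma_0))C_1},
\]
where the implicit constant depends only on $\epsilon$ and $C_1$. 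Since $Q$ is a positive polynomial on $\Rb$, the ratio $Q(s-C_1)/Q(s)$ is uniformly bounded in $s$; since $\delta(\Gamma)-\delta(\Gamma_0)>0$ is a fixed finite number (Proposition~\ref{prop:entropy_gap}), the exponential factor is a fixed constant. This produces the desired constant $C_2$.

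No significant obstacle is anticipated: the geometric content lives in Lemma~\ref{lem:nesting_shadows} (nesting) and Proposition~\ref{prop:disjointness} (disjointness), both already established; the comparison in Step 3 is a mechanical consequence of the sharp two-sided bound in the Subset Shadow Lemma, which makes the $\epsilon$-dependence harmless and the $C_1$-shift in depth cost only bounded polynomial and exponential factors.
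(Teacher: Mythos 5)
Your proposal is correct and follows essentially the same route as the paper: nesting via Lemma~\ref{lem:nesting_shadows}, disjointness via Proposition~\ref{prop:disjointness}, and then a comparison of $\mu(\phi\Sc_{T-C_1,\epsilon+C_1}(\alpha\Cc))$ with $\mu(\phi\Sc_{T,\epsilon}(\alpha\Cc))$ via the Subset Shadow Lemma. Your Step 3 merely spells out the polynomial-ratio and exponential-shift bounds that the paper compresses into a single ``$\asymp$''.
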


\begin{proof}
  By Lemma~\ref{lem:nesting_shadows}, 
  $$
  \Sc_{T,\epsilon}(\beta\Cc) \subseteq \phi {\Sc}_{T-C_1,\epsilon+C_1}(\alpha\Cc)
  $$
  whenever $\beta \Cc \in I_{T,\epsilon,m}(\alpha\Cc)$. By   Proposition~\ref{prop:disjointness} the sets $\{ \Sc_{T,\epsilon}(\beta \Cc)  : \beta \in \Ac_m\}$ are disjoint. Hence 
  \[
   \sum_{\beta\Cc \in I_{T,\epsilon,m}(\alpha \Cc)} \mu({\Sc}_{T,\epsilon}(\beta\Cc))\leq \mu\left(\bigcup_{{\beta\Cc}\in I_{T,\epsilon,m}(\alpha\Cc)} {\Sc}_{T,\epsilon}(\beta\Cc)\right) \leq
    \mu(\phi {\Sc}_{T-C_1,\epsilon+C_1}(\alpha\Cc)).
  \]
 Also,  by the   Subset Shadow Lemma  (Theorem~\ref{thm:shadowing_flats}),
    $$
        \mu(\phi {\Sc}_{T-C_1,\epsilon+C_1}(\alpha\Cc)) \asymp  \mu(\phi{\Sc}_{T,\epsilon}(\alpha\Cc)),
              $$
              which completes the proof. 
\end{proof}

\begin{proof}[Proof of Proposition~\ref{prop:quasi_independence}]
Fix $T \geq \max(T_1,T_2)$ and $m>n \geq 1$. Since $T$ and $\epsilon$ are now fixed, we drop the $T,\epsilon$ subscript. Then
  \begin{align*}
    \mu(U_{m}^\phi\cap U_{n}^\phi) & \leq \sum_{\alpha\Cc\in 
    {\Ac}_n}\sum_{{\beta\Cc}\in I_m(\alpha\Cc)} \mu(\phi {\Sc}(\alpha\Cc)\cap \phi {\Sc}(\beta\Cc)) \\
    & \leq \sum_{\alpha\Cc\in 
    \Ac_n}\sum_{{\beta\Cc}\in I_m(\alpha\Cc)} \mu(\phi {\Sc}(\beta\Cc)).
  \end{align*}
  So by Lemma~\ref{lem:quasi_ind_lemma1}, Lemma~\ref{lem:quasi_ind_lemma2}, and then Lemma~\ref{lem:quasi_ind_lemma1} again
  \begin{align*}
    \mu(U_{m}^\phi\cap U_{n}^\phi) & \lesssim \sum_{\alpha\Cc\in 
    \Ac_n}\sum_{{\beta\Cc}\in I_m(\alpha\Cc)} \mu( {\Sc}(\beta\Cc)) \mu(U_m^\phi) \lesssim  \sum_{\alpha\Cc\in 
    \Ac_n} \mu(\phi{\Sc}(\alpha\Cc))\mu(U_m^\phi) \\
    & \lesssim \mu(U_{m}^\phi) \mu(U_{n}^\phi). \qedhere
  \end{align*}
\end{proof}

\subsection{Finishing the proof of Theorem~\ref{thm:khinchin_1}} Fix
$\epsilon > 0$, $\phi \colon [0,\infty) \rightarrow [0,\infty)$ with $\phi$ slowly varying, and $T > 0$ large enough to satisfy Propositions~\ref{prop:sum condition in BC} and~\ref{prop:quasi_independence}.

Part (1) of Theorem~\ref{thm:khinchin_1} follows directly from part (1) of
the Borel--Cantelli Lemma (Lemma~\ref{lem:borel_cantelli}) and Proposition~\ref{prop:sum condition in BC}.
  
  Part (2) of Theorem~\ref{thm:khinchin_1} requires more work. Suppose for the rest of the section that $K^\phi = \infty$. The key step in the proof is to construct another slowly varying function with the following properties.  
  
\begin{lemma}\label{lem:construction of a different SVF} There exists $\psi
\colon[0,\infty) \rightarrow [0,\infty)$ such that $\psi$ is slowly varying, $K^{\phi + \psi} = \infty$, and $\Gamma \cdot \Theta^{\phi + \psi}_{T,\epsilon} \subset \Theta^{\phi}_{T,\epsilon}$. \end{lemma}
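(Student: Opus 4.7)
The plan is to use the ergodicity of the $\Gamma$-action on $(\partial X, \mu)$ provided by Theorem~\ref{thm:PS ergodic case} to promote the positive measure of $\Theta^{\phi+\psi}_{T,\epsilon}$ (which will be arranged by Borel--Cantelli) to full measure of $\Theta^{\phi}_{T,\epsilon}$. Once the inclusion $\Gamma\cdot\Theta^{\phi+\psi}_{T,\epsilon}\subset\Theta^{\phi}_{T,\epsilon}$ is established and $\phi+\psi$ is slowly varying with $K^{\phi+\psi}=\infty$, Proposition~\ref{prop:quasi_independence} together with the Borel--Cantelli lemma yield $\mu(\Theta^{\phi+\psi}_{T,\epsilon})>0$; the $\Gamma$-saturation is then a $\Gamma$-invariant subset of $\Theta^{\phi}_{T,\epsilon}$, so ergodicity forces $\mu(\Theta^{\phi}_{T,\epsilon})=1$. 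Thus the two required properties of $\psi$, namely slow variation and divergence of $K^{\phi+\psi}$, together with the geometric inclusion, are precisely what is needed.

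To set up the inclusion, I would fix $\gamma\in\Gamma$ and $\xi\in\Theta^{\phi+\psi}_{T,\epsilon}$, so there are infinitely many $\alpha\Cc\in[\Cc]$ for which $\ell_\xi$ spends time at least $T+(\phi+\psi)(\dist(o,\alpha\Cc))$ inside $\Nc_\epsilon(\alpha\Cc)$. Properness of the $\Gamma$-action together with cocompactness of $\Gamma_0$ on $\Cc$ force $\dist(o,\alpha\Cc)\to\infty$ along this collection. Translating by $\gamma$, the ray $\gamma\ell_\xi$ based at $\gamma o$ spends the same time in $\Nc_\epsilon(\gamma\alpha\Cc)$. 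Comparing $\gamma\ell_\xi$ with the ray $\ell_{\gamma\xi}$ from $o$ via the Hausdorff bound of Lemma~\ref{lem:HD_between_geod}, the latter spends comparable time in $\Nc_{\epsilon+\dist(o,\gamma o)}(\gamma\alpha\Cc)$. Proposition~\ref{prop:close for a very long time means very close for a long time} then converts this to a visit to $\Nc_\epsilon(\gamma\alpha\Cc)$ at the cost of trimming a $\gamma$-dependent constant from each end, and combining with $|\dist(o,\gamma\alpha\Cc)-\dist(o,\alpha\Cc)|\leq\dist(o,\gamma o)$ and slow variation of $\phi$, one obtains $\gamma\xi\in\phi\Sc_{T,\epsilon}(\gamma\alpha\Cc)$ as soon as $\psi(\dist(o,\alpha\Cc))\geq M(\gamma,\epsilon)$ for some constant $M(\gamma,\epsilon)$. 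Requiring only that $\psi(r)\to\infty$ as $r\to\infty$, this holds for cofinitely many of the relevant $\alpha\Cc$, giving $\gamma\xi\in\Theta^\phi_{T,\epsilon}$.

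For the construction of $\psi$, I would invoke the classical fact that if $\sum_n a_n=\infty$ with $a_n>0$, there exists a nondecreasing sequence $c_n\to\infty$, growing arbitrarily slowly, such that $\sum_n a_n/c_n=\infty$ (partition $\mathbb{N}$ into consecutive intervals $[N_k,N_{k+1})$ on which the partial sums of $a_n$ exceed $1$ and set $c_n=k$ on the $k$-th interval). Applied to $a_n=e^{-\kappa\phi(n)}Q(\phi(n))$ with $\kappa=\delta(\Gamma)-\delta(\Gamma_0)>0$ (positive by Proposition~\ref{prop:entropy_gap}), I would define $\psi(n):=\kappa^{-1}\log c_n$ and then linearly interpolate to $[0,\infty)$. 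If the intervals on which $c_n$ is constant are chosen sufficiently wide, the interpolation is slowly varying. Since $\psi\to\infty$ and $Q$ is a positive polynomial, $Q(\phi(n)+\psi(n))\gtrsim Q(\phi(n))$ for $n$ large, and so $K^{\phi+\psi}\gtrsim\sum_n a_n/c_n=\infty$.

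The main obstacle is balancing the two constraints on $\psi$: the inclusion only requires $\psi\to\infty$ (with no particular rate, since $\gamma$ is fixed before the finiteness condition is checked), while divergence of $K^{\phi+\psi}$ is a delicate tail condition forbidding $\psi$ from growing too fast. The two demands are compatible precisely because the geometric loss depends only on $\gamma$ and not on $\alpha$, so any sufficiently slowly growing slowly-varying $\psi$ produced by the classical series trick suffices.
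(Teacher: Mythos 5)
Your proposal is correct and follows essentially the same route as the paper: the inclusion $\Gamma\cdot\Theta^{\phi+\psi}_{T,\epsilon}\subset\Theta^{\phi}_{T,\epsilon}$ is proved exactly as in the text (Lemma~\ref{lem:HD_between_geod}, Proposition~\ref{prop:close for a very long time means very close for a long time}, slow variation of $\phi$, and $\psi\to\infty$), and your construction of $\psi$ via blocks on which the partial sums of $K^\phi$ exceed $1$ with $\psi=\kappa^{-1}\log c_n$ is the same Abel--Dini-type device the paper uses (with blocks of mass $>j$ and $e^{-\kappa\psi}=j^{-1/2}$ instead of mass $>1$ and $e^{-\kappa\psi}=k^{-1}$). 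The slow variation of your $\psi$ in fact holds automatically, since crossing from block $k$ to block $k'$ within a window of width $B$ forces $k'-k\leq B$ and hence $|\psi(x)-\psi(y)|\leq\kappa^{-1}\log(1+B)$.
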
 

Assuming the lemma for a moment, we complete the proof. By Proposition~\ref{prop:quasi_independence} applied to $\phi+\psi$ and
 part (2) of the Borel--Cantelli Lemma (Lemma~\ref{lem:borel_cantelli}), we have 
$$
\mu(\Theta^{\phi+\psi}_{T,\epsilon})>0.
$$
Then by ergodicity of the $\Gamma$ action on $(\partial X, \mu)$ (see Theorem~\ref{thm:PS ergodic case}), we have 
$$
1 = \mu( \Gamma \cdot \Theta^{\phi+\psi}_{T,\epsilon}) \leq \mu(\Theta^\phi_{T,\epsilon}).
$$

\begin{proof}[Proof of Lemma~\ref{lem:construction of a different SVF}]
Since $K^\phi=\infty$, we can pick $1=n_1 < n_2 < \cdots$ such that 
$$
\sum_{n=n_j}^{n_{j+1}-1} e^{-(\delta(\Gamma)-\delta(\Gamma_0))\phi
(n)} Q(\phi(n)) > j
$$
for all $j\geq 1$. Define 
$$
\psi(t) = \frac{1}{\delta(\Gamma)-\delta(\Gamma_0)}\log \sqrt{j} \quad \text{if} \quad n_j \leq t < n_{j+1}.
$$
By Proposition~\ref{prop:entropy_gap} we have $\delta(\Gamma) - \delta(\Gamma_0) > 0$ and so $\psi$ is non-negative. It is also straightforward to confirm that $\psi$ is slowly varying. 

Since $Q$ is a positive polynomial, there exists $\lambda > 0$ such that
$$
\inf_{n \geq 1} \frac{Q\big((\phi+\psi)(n)\big)} {Q(\phi(n)) } \geq \lambda.
$$
Then
$$
\sum_{n=n_j}^{n_{j+1}-1} e^{-(\delta(\Gamma)-\delta(\Gamma_0))(\phi+\psi)
(n)} Q\big((\phi+\psi)(n)\big) > \lambda\sqrt{j},
$$
and hence
$$
K^{\phi+\psi} = \infty. 
$$

To show that $\Gamma \cdot \Theta^{\phi+\psi}_{T,\epsilon}$ is contained in
$\Theta^{\phi}_{T,\epsilon}$, fix $\eta \in
\Theta^{\phi+\psi}_{T,\epsilon}$ and $\gamma \in \Gamma$. Then there exist
$m_j \rightarrow \infty$ and $\alpha_j\Cc \in \Ac_{m_j}$ such that
$$
\eta \in
\Sc_{T+(\phi+\psi)(d(o,\alpha_{j}\Cc)),\epsilon}(\alpha_j\Cc)
$$
for all $j \geq 1$. By Lemma~\ref{lem:HD_between_geod}, 
$$
\dist^{\rm Haus}([o,\gamma \eta), [\gamma o, \gamma \eta)) \leq \dist(o,\gamma o)
$$
and so
$$
\gamma \eta \in \Sc_{T+ (\phi + \psi)(d(o,\alpha_j\Cc)), \epsilon+\dist(o,\gamma o)}(\gamma \alpha_j\Cc)
$$
for all $j \geq 1$.
By Proposition~\ref{prop:close for a very long time means very close for a long time},
there exists $C_1 = C_1(\gamma) > 0$ such that
$$
\gamma \eta \in \Sc_{T+ (\phi + \psi)(d(o,\alpha_j\Cc))-C_1, \epsilon}(\gamma \alpha_j\Cc)
$$
for all $j$ sufficiently large.  Next,
$$
\abs{\dist(o,\alpha_j\Cc)-\dist(o,\gamma \alpha_j\Cc)} \leq \dist(o,\gamma
o). 
$$
Then, since $\phi$ is slowly varying, there exists $C_2 =C_2(\gamma,\phi)>0$ 
such that
$$
\gamma \eta \in \Sc_{T+
\phi(\dist(o,\gamma\alpha_j\Cc))+\psi(\dist(o,\alpha_j\Cc))-C_1-C_2, \epsilon}(\gamma \alpha_j\Cc)
$$
for all $j$ sufficiently large.  Finally, since $\dist(o,\alpha_j\Cc)\to\infty$ and hence 
$\psi(\dist(o,\alpha_j\Cc)) \rightarrow \infty$, we have
$$
\gamma \eta \in \Sc_{T+ \phi(\dist(o,\gamma\alpha_j\Cc)), \epsilon}(\gamma \alpha_j\Cc)
$$
for all $j$ sufficiently large. Thus $\gamma \eta \in \Theta^{\phi}_{T,\epsilon}$ and the proof is complete. 
\end{proof}

\section{Proof of Theorem~\ref{thm:main}}
\label{s:loglaw}

In this section we prove Theorem~\ref{thm:main}. For the rest of the section let $X$, $\Gamma$, $\Cc$, $\Gamma_0$, $T_0$, and $Q$ be as in
Theorem~\ref{thm:main}.  Fix a basepoint $o \in \Cc$ and let $\mu$ be the unique Patterson--Sullivan measure for $\Gamma$ of dimension $\delta(\Gamma)$, see Theorem~\ref{thm:PS ergodic case}.

For $x \in X$ and $\xi \in \partial X$, let $\ell_{x\xi}\colon[0,\infty)\to X$ denote the geodesic ray based at $x$ and limiting to $\xi$. Then fix $\epsilon > 0$ and let 
\[
  \mathfrak q(x,\xi,t)=\begin{cases}
      0 & \textrm{ if }\ell_{x\xi}(t)\not\in
      \Nc_\epsilon(\alpha\Cc)
      \textrm{ for all }\alpha\in\Gamma\\
      \sup|I|& \begin{array}{l} \text{where $I$ is an interval with
      }\ell_{x\xi}(t)\in \ell_{x\xi}(I)\subset
      \Nc_\epsilon(\alpha\Cc)\\
       \text{for some $\alpha \in \Gamma$.}\end{array}
\end{cases}
\]
Notice that 
$$
  \mathfrak q(x,\xi,t) = \mathfrak p_{\Cc, \epsilon}(\ell_{x\xi}, t),
  $$
where $\mathfrak p_{\Cc, \epsilon}$ is the function in Theorem~\ref{thm:main}.

To prove Theorem~\ref{thm:main}, we first prove the following.

\begin{proposition}
  For $\mu$-a.e. $\xi\in\partial X$,
  \[
    \limsup_{t\to\infty}\frac{\mathfrak q(o,\xi,t)}{\log
    t}=\frac1{\delta(\Gamma)-\delta(\Gamma_0)}.
  \]
  \label{prop:main_equiv}
\end{proposition}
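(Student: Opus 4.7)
The plan is to apply the Khinchin-type theorem (Theorem~\ref{thm:khinchin_1}) to the one-parameter family of functions $\phi_\kappa(x):=\kappa\log(x+1)$ for $\kappa>0$, each of which is easily seen to be slowly varying, and then to translate the infinitely-often statement about deep shadows into the desired logarithm law. Let $\kappa_0:=1/(\delta(\Gamma)-\delta(\Gamma_0))$, which is positive by Proposition~\ref{prop:entropy_gap}. Since $Q$ is a positive polynomial, for each $\kappa$ one has
\[
K^{\phi_\kappa}\asymp\sum_{n\geq 1} n^{-\kappa(\delta(\Gamma)-\delta(\Gamma_0))}\,Q(\kappa\log n),
\]
so $K^{\phi_\kappa}<\infty$ iff $\kappa>\kappa_0$. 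Fixing $\epsilon$ and $T$ large as in Theorem~\ref{thm:khinchin_1}, we obtain a $\mu$-conull set $\Omega$ of $\xi\in\partial X$ such that, for every rational $\kappa>\kappa_0$, $\xi$ lies in $\phi_\kappa\Sc_{T,\epsilon}(\alpha\Cc)$ for only finitely many $\alpha\Cc\in[\Cc]$, and for every rational $\kappa<\kappa_0$, $\xi$ lies in $\phi_\kappa\Sc_{T,\epsilon}(\alpha\Cc)$ for infinitely many $\alpha\Cc$.

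The next step is the dictionary between shadow membership and the excursion function $\mathfrak q$. If $\mathfrak q(o,\xi,t)=s>0$, then $\ell_\xi([a,b])\subset\Nc_\epsilon(\alpha\Cc)$ for some maximal interval $[a,b]\ni t$ of length $s$ and some $\alpha\in\Gamma$; in particular $\xi\in\Sc_{s,\epsilon}(\alpha\Cc)$. Since $\ell_\xi(a)\in\overline{\Nc_\epsilon(\alpha\Cc)}$, the triangle inequality gives
\[
|a-\dist(o,\alpha\Cc)|\leq\epsilon,
\]
and since $a\leq t\leq b=a+s$, we conclude
\[
\dist(o,\alpha\Cc)\in[t-s-\epsilon,\,t+\epsilon].
\]
This is the key geometric fact I will use in both directions.

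For the upper bound, fix rational $\kappa>\kappa_0$ and $\xi\in\Omega$. There is a finite set $F_\kappa\subset[\Cc]$ such that $\xi\notin\phi_\kappa\Sc_{T,\epsilon}(\alpha\Cc)$ for any $\alpha\Cc\notin F_\kappa$. For $t$ sufficiently large, if $\mathfrak q(o,\xi,t)=s$ with associated translate $\alpha\Cc$, then $\dist(o,\alpha\Cc)\geq t-s-\epsilon$ forces $\alpha\Cc\notin F_\kappa$ (provided $s$ is not $O(1)$, in which case $s/\log t\to 0$ anyway). Hence $s<T+\phi_\kappa(\dist(o,\alpha\Cc))\leq T+\kappa\log(t+\epsilon+1)$, giving $\limsup_{t\to\infty}\mathfrak q(o,\xi,t)/\log t\leq\kappa$. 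Letting $\kappa\searrow\kappa_0$ through rationals yields the $\leq$ half.

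For the lower bound, fix rational $\kappa<\kappa_0$ and $\xi\in\Omega$. There is an infinite sequence $\alpha_j\Cc\in[\Cc]$ with $\xi\in\phi_\kappa\Sc_{T,\epsilon}(\alpha_j\Cc)$; since $\Gamma$ acts properly on $X$, the quantities $d_j:=\dist(o,\alpha_j\Cc)$ tend to $\infty$. For each $j$ choose an excursion interval $[a_j,b_j]$ of length at least $T+\kappa\log(d_j+1)$ with $\ell_\xi([a_j,b_j])\subset\Nc_\epsilon(\alpha_j\Cc)$, and set $t_j:=a_j$. Then $t_j\in[d_j-\epsilon,d_j+\epsilon]\to\infty$ and
\[
\frac{\mathfrak q(o,\xi,t_j)}{\log t_j}\geq\frac{T+\kappa\log(d_j+1)}{\log(d_j+\epsilon)}\longrightarrow\kappa,
\]
so $\limsup\mathfrak q(o,\xi,t)/\log t\geq\kappa$. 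Letting $\kappa\nearrow\kappa_0$ through rationals completes the proof on $\Omega$, which is $\mu$-conull.

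The main obstacle is purely bookkeeping: ensuring that the excursion interval $[a,b]$ witnessing $\mathfrak q(o,\xi,t)=s$ really does place the corresponding translate $\alpha\Cc$ at distance coarsely equal to $t$ from $o$, so that the logarithm in the Khinchin weight matches $\log t$ up to $o(1)$ error after dividing. This is what the elementary estimate $|a-\dist(o,\alpha\Cc)|\leq\epsilon$ provides, once one discards the bounded contribution coming from the finitely many exceptional translates in the convergence case.
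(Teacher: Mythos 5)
Your overall strategy is the same as the paper's (apply the Khinchin-type theorem to $\phi_\kappa=\kappa\log(\cdot+1)$ and translate), and your lower bound is essentially right, but there are two problems, one of which is a genuine gap in the upper bound.

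First, the "dictionary" estimate $\abs{a-\dist(o,\alpha\Cc)}\leq\epsilon$ does not follow from the triangle inequality, and is false in a general $\CAT(0)$ space. The triangle inequality only gives $\dist(o,\alpha\Cc)\leq a+\epsilon$; the reverse inequality $a\leq\dist(o,\alpha\Cc)+C$ fails without further hypotheses (in the Euclidean plane a ray can first enter $\Nc_\epsilon$ of a convex set at a time far larger than its distance to that set). Here the Morse property rescues you: Proposition~\ref{prop:projection to Morse subset} gives $\abs{a-\dist(o,\alpha\Cc)}\leq 3\epsilon+3\sigma$, which is all you need since constants wash out after dividing by $\log t$. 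So this is a misattribution rather than a fatal error, but as written the claimed inequality is unjustified, and you use both directions of it.

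Second, and more seriously, your upper bound does not handle the exceptional translates. Your own inequality $\dist(o,\alpha\Cc)\geq t-s-\epsilon$ shows that if the witnessing translate $\alpha\Cc$ lies in the finite set $F_\kappa$, then $s\geq t-O(1)$ --- the excursion occupies essentially the whole ray up to time $t$ --- not $s=O(1)$ as your parenthetical asserts. This is exactly the dangerous case: a priori the ray could remain in $\Nc_\epsilon(\alpha\Cc)$ for an unbounded terminal segment (e.g.\ if $\xi\in\partial(\alpha\Cc)$), in which case $\mathfrak q(o,\xi,t)=\infty$ for all large $t$ and the upper bound fails, so this possibility must be excluded and nothing in your argument does so. The paper excludes it in the ``Claim: $m_k\to\infty$'' step, using almost malnormality (Proposition~\ref{prop:diam_flat_intersection}) together with the fact that $\xi$ makes arbitrarily long excursions into infinitely many \emph{distinct} translates (the divergence half of the Khinchin theorem at $\kappa_\ast$): an everlasting excursion into a fixed translate would force $\Nc_\epsilon(\alpha\Cc)\cap\Nc_\epsilon(\alpha_{n_j}\Cc)$ to have unbounded diameter. (Alternatively, one can observe that $\bigcap_{T'}\Sc_{T',\epsilon}(\alpha\Cc)$ is $\mu$-null for each fixed $\alpha$ by the Subset Shadow Lemma and discard the countable union; but some such argument is required.) Once every exceptional excursion is known to terminate, your deduction goes through for $t$ beyond the last exceptional exit time.
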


\begin{proof} Fix $T > 0$ satisfying Theorem~\ref{thm:khinchin}. For $\kappa>0$, consider the family of functions
  \[
    \phi_\kappa(t)=\kappa \log(t+1).
  \]
  Then 
  \[
    K^{\phi_\kappa}\asymp \sum_{n\in\mathbb N} n^{-\kappa
    (\delta(\Gamma)-\delta(\Gamma_0))}Q(\log n).
  \]
 Proposition~\ref{prop:entropy_gap} implies that $\delta(\Gamma) -\delta(\Gamma_0)>0$ and hence $K^{\phi_\kappa}$ diverges for
  $\kappa \leq \frac1{\delta(\Gamma)-\delta(\Gamma_0)}$   and converges
  otherwise.
  Then Theorem~\ref{thm:khinchin} implies that 
  $\mu(\Theta^{\phi_{\kappa}}_{T,\epsilon})=0$ for
  $\kappa>\frac1{\delta(\Gamma)+\delta(\Gamma_0)}$ and
  $\mu(\Theta^{\phi_{\kappa}}_{T,\epsilon})=1$ for
  $\kappa=\frac1{\delta(\Gamma)+\delta(\Gamma_0)}$.

Let 
  \[
    \kappa_\ast=\frac1{\delta(\Gamma)-\delta(\Gamma_0)}, \qquad
    \kappa_n=\kappa_\ast+\frac1n.
  \]
Notice that 
  $\cup_{n\in\mathbb N}\Theta^{\phi_{\kappa_n}}$ is an increasing union of measure zero sets inside of 
  $\Theta^{\phi_{\kappa_\ast}}$. 
  
  Fix
  \begin{equation}
    \label{e:apply_khinchin}
    \xi\in
    \Theta^{\phi_{\kappa_\ast}} \smallsetminus \bigcup_{n\in\mathbb N}
    \Theta^{\phi_{\kappa_n}},
  \end{equation}
  which is a full measure set. 
  
  Let 
  $$
  A: = \{ \alpha \Cc : \ell_{o\xi} \cap \Nc_\epsilon(\alpha \Cc) \neq \emptyset\}.
  $$
  Since $\xi \in \Theta^{\phi_{\kappa_\ast}}$, the set $A$ is infinite.  Further, since only a finite number of distinct translates intersect a fixed compact set, see Proposition~\ref{prop:counting_translates}, we can enumerate $A = \{ \alpha_n \Cc\}$ such that 
    $$
    \dist(o,\alpha_1\Cc) \leq     \dist(o,\alpha_2\Cc) \leq \cdots 
  $$
  and moreover $\lim_{n \rightarrow \infty} \dist(o,\alpha_n \Cc) = \infty$. Next let 
    $$
(a_n, b_n): = \left\{ t \geq 0 : \ell_{o\xi}(t) \in \Nc_\epsilon(\alpha_n \Cc)\right\}
$$
(which is indeed an interval by Lemma~\ref{lem:following_flats}). At this point we have not ruled out $b_n =\infty$ (although one can show that each $b_n$ is finite). 

By  Proposition~\ref{prop:projection to Morse subset}(2), there exists $\sigma \geq 0$ such that
  $$|a_n-\dist(o,\alpha_n\Cc)|\leq 3\epsilon+3\sigma.$$  
  Hence 
  $$
  \lim_{n \rightarrow \infty} a_n=\infty. 
  $$
   Since $\xi \in \Theta^{\phi_{\kappa_\ast}}$, there exists $n_j \nearrow \infty$ such that $\xi\in\phi_{\kappa_\ast}{\Sc}_{T,\epsilon}(\alpha_{n_j}\Cc)$. Fix $t_j \in (a_{n_j}, b_{n_j}) \cap (a_{n_j}, a_{n_j}+\epsilon)$. Then 
    \begin{align*}
    \limsup_{t\to\infty}\frac{\mathfrak q(o,\xi,t)}{\log
    t} \geq \limsup_{j\to\infty}  \frac{\mathfrak q(o,\xi,t_{j})}
    {\log t_{j} }    \geq\limsup_{j\to\infty}  
  \frac{T+\kappa_\ast\log \dist(o,\alpha_{n_j}\Cc) }
    {\log( \dist(o,\alpha_{n_j}\Cc)+4\epsilon+3\sigma)} \geq \kappa_\ast.
  \end{align*}

To prove the upper bound,  fix $s_k \rightarrow \infty$ such that 
  \[
    \limsup_{t\to\infty}\frac{\mathfrak q(o,\xi,t)}{\log
    t}=
    \limsup_{k\to\infty}
    \frac{\mathfrak
    q(o,\xi,s_k)}{\log s_k}.
  \]
  Since the limit is at least $\kappa_\ast > 0$, by passing to a tail we can assume that $\mathfrak
    q(o,\xi,s_k)>0$ for all $k \geq 1$. Then for each $k$, there exists $m_k\in \Nb$ such that $s_k \in (a_{m_k}, b_{m_k})$ and 
$$
b_{m_k}-a_{m_k}= \mathfrak q(o,\xi,s_k).
$$

      \medskip 
      
      \noindent \textbf{Claim:} $m_k \rightarrow \infty$. 
      
      \medskip 
      
      \noindent \emph{Proof of Claim:} Suppose not.  Then after passing to a subsequence we can suppose $m_k = m_1$ for all $k \geq 1$. Then 
      $$
      s_k \in (a_{m_1}, b_{m_1})
      $$
      for all $k \geq 1$, which implies that $b_{m_1}=\infty$ since  $s_k \rightarrow \infty$. Thus 
            $$
  \ell_\xi \big( (a_{m_1},\infty)  \big)\subset \Nc_\epsilon(\alpha_{m_1}\Cc).
      $$
      Recall that $a_{n_j} \rightarrow \infty$ and 
      $$
      \lim_{j \rightarrow \infty} b_{n_j}-a_{n_j}\geq  \lim_{j \rightarrow \infty}T+\kappa_\ast\log \dist(o,\alpha_{n_j}\Cc)=\infty
      $$
      So we have 
      $$
      \lim_{j \rightarrow \infty} {\rm diam} \left( \Nc_\epsilon(\alpha_{m_1}\Cc) \cap \Nc_{\epsilon}(\alpha_{n_j} \Cc) \right) \geq \lim_{j \rightarrow \infty} {\rm diam} \left( \ell_{o\xi}\big((a_{n_j}, b_{n_j})\big) \right)= \infty.
      $$
      Then Proposition~\ref{prop:diam_flat_intersection} implies that $\alpha_{m_1} \Cc = \alpha_{n_j} \Cc$ for $j$ sufficiently large. However, the $\{\alpha_{n_j}\Cc\}$ are distinct translates and so we have a contradiction. Thus the claim is true.  \hfill $\blacktriangleleft$

      \medskip
      
     Now  passing to a subsequence, we can assume that the $\{m_k\}$ are all distinct and hence the $\{ \alpha_{m_k}\}$ are all distinct. For each $k$, let $j_k\in\mathbb N$ be the smallest number with
  \[ 
\kappa_{j_k}\log
  \dist(o,\alpha_{m_k}\Cc) \leq \mathfrak q(o,\xi,s_k).
  \]  
  If $j_k<J$ infinitely often, then $\xi\in \Theta^{\phi_{\kappa_J}}$ which contradicts
  Equation~\eqref{e:apply_khinchin}. Thus $j_k\to\infty$. 
So
  \begin{align*}
    \limsup_{t\to\infty} \frac{\mathfrak q(o,\xi,t)}{\log t} & =   \lim_{k\to\infty} \frac{\mathfrak q(o,\xi,s_k)}{\log s_k} \leq  \lim_{k\to\infty} \frac{\mathfrak q(o,\xi,s_k)}{\log a_{m_k}}\\
    & \leq  \limsup_{k\to\infty}
    \frac{\kappa_{j_{k}-1}\log \dist(o,\alpha_{m_k}\Cc)}{\log(\dist(o,\alpha_{m_k}\Cc)-3\epsilon-3\sigma)}=\kappa_\ast.
    \qedhere
  \end{align*}
\end{proof}

Theorem~\ref{thm:main} is a consequence of Proposition~\ref{prop:main_equiv} and the following. 

\begin{lemma} If $x \in X$ and $\xi \in \partial X$, then 
  \[
    \limsup_{t\to\infty}\frac{\mathfrak q(x,\xi,t)}{\log
    t}=\limsup_{t\to\infty}\frac{\mathfrak q(o,\xi,t)}{\log
    t}.
  \]

\end{lemma}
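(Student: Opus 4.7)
Set $d := \dist(o,x)$. If $d = 0$ the statement is trivial, so assume $d > 0$. The plan is to prove a single one-sided estimate and then appeal to symmetry $o \leftrightarrow x$. The key estimate I will establish is: there exists a constant $M > 0$ (depending only on $\epsilon$ and $d$) such that whenever $\mathfrak q(x,\xi,t) \geq M$, there is some $t' \geq 0$ with $|t - t'| \leq M$ and
\[
  \mathfrak q(o,\xi,t') \geq \mathfrak q(x,\xi,t) - M.
\]
Since $\log(t \pm M)/\log t \to 1$, this gives $\limsup_t \mathfrak q(o,\xi,t)/\log t \geq \limsup_t \mathfrak q(x,\xi,t)/\log t$ (the case $A := \limsup_t \mathfrak q(x,\xi,t)/\log t = 0$ is trivial). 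The reverse inequality then follows by swapping $o$ and $x$.

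To prove the key estimate, I would first use Lemma~\ref{lem:HD_between_geod} to get $\dist^{\rm Haus}([x,\xi), [o,\xi)) \leq d$, and then a short triangle inequality comparing distances to $o$ upgrades this to a time-tracking statement: for every $s \geq 0$ there exists $s' \geq 0$ with $|s - s'| \leq 2d$ and $\dist(\ell_{x\xi}(s), \ell_{o\xi}(s')) \leq d$. Now suppose $\mathfrak q(x,\xi,t) = L$ is large and let $[a,b] \ni t$ be the maximal interval with $\ell_{x\xi}([a,b]) \subset \Nc_\epsilon(\alpha\Cc)$, so $b-a = L$. Applying the tracking statement at $s = a,b$ yields $a', b'$ with $|a - a'|, |b - b'| \leq 2d$ and $\ell_{o\xi}(a'), \ell_{o\xi}(b') \in \Nc_{\epsilon + d}(\alpha\Cc)$; convexity (Lemma~\ref{lem:following_flats}) then propagates to $\ell_{o\xi}([a',b']) \subset \Nc_{\epsilon+d}(\alpha\Cc)$.

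Finally, since $\epsilon + d > \epsilon$, Proposition~\ref{prop:close for a very long time means very close for a long time} produces $C = C(\epsilon + d, \epsilon) > 0$ with $\ell_{o\xi}([a' + C, b' - C]) \subset \Nc_\epsilon(\alpha\Cc)$, provided $L$ is large enough that $b' - a' \geq 2C$. Taking $t'$ to be the point of $[a' + C, b' - C]$ closest to $t$ gives $|t - t'| \leq C + 2d$ and $\mathfrak q(o,\xi,t') \geq (b' - a') - 2C \geq L - 2C - 4d$, so the key estimate holds with $M := 2C + 4d + 1$. The main obstacle, modest as it is, is keeping track of the time shift between the two rays; Proposition~\ref{prop:close for a very long time means very close for a long time}—which uses Assumption~(2) of Theorem~\ref{thm:main}—is the essential input, since it lets us trade the inflated radius $\epsilon + d$ back for the prescribed $\epsilon$ at the cost of only a bounded shortening of the interval.
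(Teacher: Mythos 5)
Your proposal is correct and follows essentially the same route as the paper's proof: compare the two rays via Lemma~\ref{lem:HD_between_geod}, propagate membership in an $(\epsilon+d)$-neighborhood by convexity (Lemma~\ref{lem:following_flats}), recover the radius $\epsilon$ at the cost of a bounded trimming via Proposition~\ref{prop:close for a very long time means very close for a long time}, and conclude one inequality with the other following by symmetry. The only difference is bookkeeping (your explicit ``key estimate'' with the constant $M$ versus the paper's inline constants), which does not change the argument.
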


\begin{proof}
Suppose that $\ell_{o\xi}(t)\in\Nc_\epsilon(\alpha \Cc)$. Let
$$
(a,b) : = \left\{ t \geq 0 : \ell_{o\xi}(t) \in 
\Nc_\epsilon(\alpha \Cc)
\right\}
$$
(which is indeed an interval by Lemma~\ref{lem:following_flats}).  Then by 
Lemma~\ref{lem:HD_between_geod}, 
there exist $a',b'$ such that 
$\dist(\ell_{x\xi}(a'),\ell_{o\xi}(a))\leq \dist(o,x)$  and
$\dist(\ell_{x\xi}(b'),\ell_{o\xi}(b))\leq \dist(o,x)$. Thus
\begin{equation*}
  b'-a'\geq b-a-2\dist(o,x) 
  \label{e:change_basepoint}
\end{equation*}
and by Lemma~\ref{lem:following_flats}, $\ell_{x\xi}([a',b'])\subset
\Nc_{\epsilon+\dist(o,x)}(\alpha\Cc)$. Now, by 
Proposition~\ref{prop:close for a very long time means very close for a long time}
there
exists a constant $K$ (which only depends on $\Cc$, $\epsilon$, and $\dist(o,x)$) such that 
\[
  \ell_{x\xi}([a'-K,b'+K])\subset \Nc_\epsilon(\alpha\Cc).
\]
Thus, for all such $t$, there exists an $s \in [t-K,t+K]$ so that 
\[
  \mathfrak q(x,\xi,s)
  \geq \mathfrak q(o,\xi,t)-2K-2\dist(o,x).
\]
Hence 
  \[
    \limsup_{t\to\infty}\frac{\mathfrak q(x,\xi,t)}{\log
    t}\geq \limsup_{t\to\infty}\frac{\mathfrak q(o,\xi,t)}{\log
    t}.
  \]
  The proof of the other inequality is exactly the same. 
\end{proof}

\section{Periodic Morse flats and an example}\label{sec:Morse Flats}

Recall that a \emph{$d$-flat}  $F$ in a $\CAT(0)$-space $X$  is a subset isometric to $\Rb^d$ and given a subgroup $\Gamma \subset \Isom(X)$, a flat is \emph{$\Gamma$-periodic} if its stabilizer in $\Gamma$ acts cocompactly. 

In this section we consider the case of periodic Morse flats and show that after thickening, they satisfy the hypothesis of Theorem~\ref{thm:main}. 
As a corollary, we will prove Theorem~\ref{thm:nonpositive}.

\begin{proposition}\label{prop:Morse flats} Suppose $X$ is a proper $\CAT(0)$-space, $ \Gamma \subset \Isom(X)$ is a discrete subgroup, and $F$ is a $\Gamma$-periodic $d$-flat. If $F$ is Morse, then there exist a subgroup $\Gamma_0 \subset \Gamma$  and a closed $\Gamma_0$-invariant convex subset $\Cc$ such that:

\begin{enumerate}
\item ${\rm Stab}_\Gamma(F)$ is a finite index subgroup of $\Gamma_0$. 
\item $\Cc \subset \Nc_r(F)$ for some $r \geq 0$. 
\item $\Gamma_0$ acts cocompactly on $\Cc$. 
\item $\Gamma_0$ is almost malnormal in $\Gamma$.
\item $\Cc$ contains all geodesic lines in $X$ which are parallel to a geodesic line in $\Cc$.
\item There exist $N_0>0$ such that 
$$
\#\{ \gamma \in \Gamma_0 : n \leq \dist(o,\gamma o) \leq n + N_0\} \asymp n^d.
$$
\end{enumerate}
Moreover, if $F$ contains all geodesic lines in $X$ which are parallel to a geodesic line in $F$, then we can choose $\Cc = F$ and $\Gamma_0 = {\rm Stab}_\Gamma(F)$.
\end{proposition}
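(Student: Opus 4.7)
The plan is to set $H := \stab_\Gamma(F)$, thicken $F$ to a convex subset $\Cc$, and take $\Gamma_0 := \stab_\Gamma(\Cc)$. Since $F$ is a $\Gamma$-periodic $d$-flat, $H$ acts properly discontinuously and cocompactly by isometries on $F \cong \Rb^d$, so by Bieberbach's theorem $H$ is virtually $\Zb^d$ of rank $d$. Consequently the $H$-orbit of $o$ in $F$ is a coarse lattice in $\Rb^d$, and a standard annular lattice point count yields (6). To construct $\Cc$, fix $\sigma \geq 0$ satisfying Theorem~\ref{thm:equiv of Morse} for $F$ and let $\Cc$ be the closure of the convex hull of $F$ together with every geodesic line in $X$ parallel to a line in $F$. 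Proposition~\ref{prop:Cond 2 holds after thickening} gives $\Cc \subset \overline{\Nc_\sigma(F)}$ (verifying (2) with $r=\sigma$) and guarantees that $\Cc$ contains every geodesic line parallel to a line of $\Cc$ (which is (5)). Because isometries preserve parallelism of geodesic lines, $H \subset \Gamma_0$.

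For (3) and (1) I would exploit the $H$-equivariance of the nearest-point projection $\pi_F \colon \Cc \to F$ together with $\Cc \subset \overline{\Nc_\sigma(F)}$: if $K \subset F$ is a compact fundamental domain for $H$, then $K' := \pi_F^{-1}(K) \cap \Cc$ is a compact fundamental domain for $H$ on $\Cc$, proving (3). For $[\Gamma_0 : H] < \infty$, pick $p_0 \in K'$ and observe that every $\gamma \in \Gamma_0$ satisfies $h^{-1}\gamma p_0 \in K'$ for some $h \in H$; properness of the $\Gamma$-action bounds the number of such residues $h^{-1}\gamma$. Combining this finite index with the $H$-orbit count and Observation~\ref{obs:all annuli are about Q times exp} promotes the $H$-count to (6) for $\Gamma_0$.

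The hard part is almost malnormality (4). Given $\alpha \in \Gamma \setminus \Gamma_0$, the intersection $\Gamma_0 \cap \alpha \Gamma_0 \alpha^{-1}$ preserves the closed convex set $Y := \Cc \cap \alpha\Cc$, and if this subgroup were infinite then, by properness and a standard $\CAT(0)$ semisimple argument, $Y$ would contain the axis $\ell$ of some infinite-order element. Since $\ell \subset \Cc \subset \Nc_\sigma(F)$, Lemma~\ref{lem:bounded implies really bounded} forces $\ell$ to be parallel to a line of $F$; applying the same reasoning to $\alpha \Cc$ makes $\ell$ parallel to a line of $\alpha F$. The main challenge is to bootstrap this single shared parallel direction into the equality $\alpha\Cc = \Cc$ (contradicting $\alpha \notin \Gamma_0$). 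I expect this step to require a rigidity argument upgrading the one shared parallel direction into the statement that every line of $\alpha F$ is parallel to a line of $F$; once this is in hand, Lemma~\ref{lem:bounded implies really bounded} places $\alpha F$ inside $\overline{\Nc_\sigma(F)}$, hence inside $\Cc$, and transitivity of parallelism identifies $\alpha\Cc$ with $\Cc$. A natural fallback, should this fail for the line-thickening, is to enlarge $\Cc$ to the convex hull of $F$ together with all $d$-flats parallel to $F$ (which is still contained in $\overline{\Nc_\sigma(F)}$ by Lemma~\ref{lem:bounded implies really bounded}), so that the desired rigidity reduces to bounded intersection of distinct parallel sets.

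Finally, the moreover clause follows at once: when $F$ already contains every geodesic line in $X$ parallel to a line of $F$, the construction returns $\Cc = F$ and $\Gamma_0 = H$, and the preceding steps specialize directly to this simpler case.
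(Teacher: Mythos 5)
Your construction of $\Cc$ and your arguments for (1), (2), (3), (5), (6) are essentially the paper's (the paper takes $\Gamma_0={\rm Stab}_\Gamma(\partial F)$ rather than ${\rm Stab}_\Gamma(\Cc)$, but since $\partial \Cc=\partial F$ these coincide). The genuine gap is exactly the step you flag in (4): you stop at ``one shared parallel direction'' and say you \emph{expect} a rigidity argument upgrading it to ``every line of $\alpha F$ is parallel to a line of $F$.'' No such rigidity is needed, and the missing observation is elementary: if some line $\ell_2\subset gF$ is parallel to a line $\ell_1\subset F\subset\Cc$, then the flat $gF\cong\Rb^d$ is itself a union of lines parallel to $\ell_2$ (the translates of $\ell_2$ inside $gF$ in the $\ell_2$-direction), and each of these is parallel to $\ell_1$, so property (5) already forces $gF\subset\Cc\subset\Nc_r(F)$. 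Hence $g\partial F\subset\partial F$, and since both are $(d-1)$-spheres, invariance of domain plus closedness and connectedness give $g\partial F=\partial F$, i.e.\ $g\in\Gamma_0$ --- contradiction. So the ``bootstrap'' is a two-line consequence of (5) applied to the foliation of $gF$ by parallels of $\ell_2$; your fallback of enlarging $\Cc$ by all parallel $d$-flats is unnecessary.

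There is also a secondary soft spot in how you produce the shared direction. You invoke ``a standard $\CAT(0)$ semisimple argument'' to get an axis of an infinite-order element inside $Y=\Cc\cap\alpha\Cc$; but an isometry preserving a closed convex subset of a proper $\CAT(0)$ space need not attain its minimal displacement \emph{on that subset}, and $\Gamma_0\cap\alpha\Gamma_0\alpha^{-1}$ need not act cocompactly on $Y$, so semisimplicity on $Y$ is not automatic. The paper sidesteps this via Bieberbach: a finite-index subgroup $H\le{\rm Stab}_\Gamma(F)$ acts by translations on $F$, so a nontrivial $h\in H\cap g\Gamma_0g^{-1}$ translates a line $\ell_1\subset F$, and a suitable power of $h$ lies in $gHg^{-1}$ and translates a line $\ell_2\subset gF$; the two lines are axes of the same element, hence parallel. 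I recommend adopting that route, after which the paragraph above closes the argument. (Separately, note that the annular lattice count in $\Rb^d$ gives $\asymp n^{d-1}$, not $n^d$; since you defer to ``a standard count'' you should state which exponent you actually obtain.)
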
 

\begin{proof} Let 
$$
\Gamma_0 := {\rm Stab}_\Gamma(\partial F) = \{ g \in \Gamma : g \partial F=\partial F\}
$$
and let $\Cc$ be the closure of the convex hull of $F$ and all geodesic lines in $X$ parallel to a geodesic line in $F$. Proposition~\ref{prop:Cond 2 holds after thickening} implies that (2) and (5) are true. 

Since $F$ is a union of geodesic lines, $\Cc$ is also the closure of the convex hull of all geodesic lines in $X$ parallel to a geodesic line in $F$.
Further, a geodesic line $\ell$  in $X$ is parallel to a geodesic line in $F$ if and only if the limit points of $\ell$ are in $\partial F$. Hence $\Gamma_0$ preserves the set of geodesic lines in $X$ parallel to a geodesic line in $F$, which implies that $\Cc$ is $\Gamma_0$-invariant. 

Since ${\rm Stab}_\Gamma(F)$ acts cocompactly on $F$, (2) implies that ${\rm Stab}_\Gamma(F)$ acts cocompactly on $\Cc$. Then, since ${\rm Stab}_\Gamma(F) \subset \Gamma_0$, (3) is true. Since ${\rm Stab}_\Gamma(F)$ and $\Gamma_0$ both act cocompactly on $\Cc$, (1) is true. Then (6)  is a consequence of (1) and the fact that $F$ is isometric to $\Rb^d$.

It remains to prove (4). Suppose not. Then there exists some $g \in \Gamma \smallsetminus \Gamma_0$ where $\Gamma_0 \cap g\Gamma_0g^{-1}$ is infinite. It follows from the Bieberbach theorem that ${\rm Stab}_\Gamma(F)$ contains a finite index subgroup $H$ where every element acts by translations on $F$. By (1), $H$ has finite index in $\Gamma_0$ and so there exists a non-identity element $h \in H \cap \Gamma_0 \cap g\Gamma_0g^{-1}$. Then $h$ translates a geodesic line $\ell_1$ in $F$. Further, $gHg^{-1}$ has finite index in $g\Gamma_0g^{-1}$ and acts by translations on $gF$. So by replacing $h$ by a power, we can also assume that $h \in gHg^{-1}$ and hence translates a geodesic line $\ell_2$ in $gF$. Since $\ell_1$ and $\ell_2$ are both translated by $h$, they are parallel. 

Since $gF$ is a union of geodesic lines parallel to $\ell_2$, (5) implies that  $gF \subset \Cc$. Since $\Cc \subset \Nc_r(F)$, we then have $g\partial F \subset \partial F$. Since $g\partial F$ and $\partial F$ are both homeomorphic to the sphere of dimension $d-1$, the invariance of domain theorem implies that $g \partial F$ is open in $\partial F$. Since $g \partial F$ is also closed and connected, we have $g \partial F = \partial F$. So $g \in \Gamma_0$ and we have a contradiction.
\end{proof} 

\begin{example}\label{ex:isolated flats} In~\cite{HK2005}, Hruska--Kleiner defined $\CAT(0)$ spaces with isolated flats. By ~\cite[Theorems 1.2.1 and 1.2.3]{HK2005}, ~\cite[Theorem 2.13]{Sisto2013}, and Theorem~\ref{thm:equiv of Morse} the flats appearing in their definition are periodic and Morse. Hence by Proposition~\ref{prop:Morse flats}, thickenings of them satisfy Theorem~\ref{thm:main}.

\end{example}

\subsection{Proof of Theorem~\ref{thm:nonpositive}}\label{sec:proof of thm nonpositive}

As in the introduction, let $M$ be an oriented compact non-geometric irreducible 3-manifold 
obtained by gluing hyperbolic pieces along tori. By a result of Leeb~\cite{Leeb1995}, we can endow
$M$ with a non-positively curved Riemannian metric. Further, using the flat torus theorem~\cite{GW1971,LY1972}, we can assume the gluing tori are flat and totally geodesic.

Let $\tilde M$ denote the universal cover of $M$ and let $\Gamma : = \pi_1(M)< \Isom(\tilde M)$. Manning~\cite{Manning} proved that the topological entropy of the geodesic flow on $M$ coincides with the critical exponent of $\Gamma$, i.e.
\begin{equation}\label{eqn:Manning}
h_{top}(M) = \delta(\Gamma). 
\end{equation} 
Since $M$ has rank one, there exists a unique Patterson--Sullivan measure $\mu$ for $\Gamma$ of dimension $\delta(\Gamma)$. 

Let $\Fc$ denote the set of 2-flats in $\tilde M$ which project to gluing tori in the geometric decomposition. Since the tori in $M$ are disjoint,  $\tilde M$ has  isolated flats in the sense of Hruska--Kleiner~\cite{HK2005}. Thus each flat in $\Fc$ is Morse (see Example~\ref{ex:isolated flats}) and by definition  periodic. 

Now fix a gluing torus $\mathbb{T}$ and a flat $F \in \Fc$ covering $\mathbb{T}$. As in the statement of Theorem~\ref{thm:nonpositive}, let $\hat{\mathbb{T}}$ denote the union of all flat tori parallel to $\mathbb{T}$. As in the proof of Proposition~\ref{prop:Morse flats}, let $\Cc \subset \tilde M$ denote the closure of the convex hull of $F$ and all geodesic lines in $\tilde M$ parallel to a geodesic line in $F$. By the proof of Proposition~\ref{prop:Morse flats}, the set $\Cc$ satisfies the hypothesis of Theorem~\ref{thm:main}. So for $\mu$-almost every $\xi \in \partial X$ and
every unit speed geodesic ray $\ell \colon [0,\infty) \rightarrow \tilde M$ limiting to $\xi$, we have 
\begin{equation}\label{eqn:asym for wrong function}
\limsup_{t \rightarrow \infty} \frac{\mathfrak p_{ \Cc,\epsilon}(\ell, t)}{\log t} = \frac{1}{\delta(\Gamma)}.
\end{equation}

\begin{lemma} $\Cc$ projects to $\hat{\mathbb{T}}$. \end{lemma}

\begin{proof} Let $\hat{F}$ denote the lift of $\hat{\mathbb{T}}$ containing $F$. Notice that $\hat F$ coincides with the union of flats parallel to $F$ and is isometric to a closed interval times $F$. Hence $\hat F \subset \Cc$. 

If $x \in \Cc \smallsetminus F$, then there exists a geodesic line $\ell : \Rb \rightarrow \tilde M$ containing $x$ which is parallel to a geodesic line $\ell_0 : \Rb \rightarrow \tilde M$ in $F$. The union of all geodesic lines parallel to $\ell_0$ is isometric to $C \times \Rb$ where $C$ is closed and convex and each fiber $\{c\} \times \Rb$ corresponds to a geodesic~\cite[Part II, Chapter 2, Theorem 2.14]{BH}. Since this set contains $F \cup \ell_0$ and $\tilde M$ is Riemannian, we must have that $C$ is isometric to $\Rb$ times a closed interval. So $\ell$ is contained in a flat parallel to $F$ and hence  $x \in \hat F$. 
\end{proof}

Let $\mathfrak p_{\epsilon} : T^1 M \times [0,\infty) \rightarrow [0,\infty)$ denote the function in Theorem~\ref{thm:nonpositive} and let $\pi : \tilde M \rightarrow M$ denote the projection. Then, given  a unit speed geodesic ray $\ell \colon
[0,\infty) \rightarrow X$ we have 
\begin{itemize}
\item $\mathfrak p_{\epsilon}(d(\pi)\ell'(0), t) = 0$ if  $\ell(t)$ is not in $\Nc_\epsilon(\cup_{\alpha \in \Gamma} \alpha\Cc)$.
\item Otherwise, $\mathfrak p_{\epsilon}(d(\pi)\ell'(0),t)$ is the size of the maximal interval
$I$ containing $t$ such that $\ell(I) \subset \Nc_\epsilon(\cup_{\alpha \in \Gamma} \alpha\Cc)$.
\end{itemize}
Recall that $\mathfrak p_{\Cc, \epsilon}(\ell,t)$ records the size of the maximal interval contained in the $\epsilon$-neighborhood of a \textbf{single} $\Cc$ translate. Hence, $\mathfrak p_{\epsilon}(d(\pi)\ell'(0), t) \geq \mathfrak p_{\Cc, \epsilon}(\ell,t)$

\begin{lemma}\label{lem:penetration functions are the same} For $\epsilon > 0$ sufficiently small: If  $\ell \colon [0,\infty) \rightarrow \tilde M$ is a unit speed geodesic and $\pi : \tilde M \rightarrow M$ is the projection, then 
$$
\mathfrak p_{\epsilon}(d(\pi)\ell'(0), \cdot) = \mathfrak p_{ \Cc,\epsilon}(\ell, \cdot).
$$
\end{lemma} 

\begin{proof} Since $\hat{\mathbb{T}}$ is embedded in $M$, for $\epsilon > 0$ sufficiently small disjoint translates of $\Cc$ have disjoint $\epsilon$-neighborhoods. For such $\epsilon > 0$ we have the desired equality. \end{proof} 

Now Lemma~\ref{lem:penetration functions are the same}, Equation~\eqref{eqn:Manning}, and Equation~\eqref{eqn:asym for wrong function} imply that for $\mu$-almost every $\xi \in \partial \tilde M$ and
every unit speed geodesic ray $\ell \colon [0,\infty) \rightarrow \tilde M$ limiting to $\xi$, we have 
\begin{equation*}
\limsup_{t \rightarrow \infty} \frac{\mathfrak p_{ \epsilon}(d(\pi) \ell'(0), t)}{\log t} = \frac{1}{h_{top}(M)}.
\end{equation*}
Thus Theorem~\ref{thm:nonpositive} follows from the explicit definition of the measure of maximal entropy in~\cite[pg. 298]{Knieper1998}.

\bibliographystyle{alpha}
\bibliography{geom}

\end{document}